\documentclass[11pt]{article}
\usepackage{mathrsfs}
\usepackage{makeidx,ifthen}
\usepackage{latexsym}
\usepackage{graphicx}
\usepackage{amsmath}
\usepackage{amssymb}
\usepackage{amsbsy,amsthm}
\usepackage{amsfonts}
\usepackage{wasysym}
\usepackage{enumerate}
\usepackage[svgnames,x11names,dvipsnames]{xcolor}

\newtheorem{thm}{Theorem}[section]
\newtheorem{cor}[thm]{Corollary}
\newtheorem{lem}[thm]{Lemma}
\newtheorem{prop}[thm]{Proposition}
\newtheorem{rem}[thm]{Remark}
\numberwithin{equation}{section}

\begin{document}

\title{Normalized solutions to 1-Laplace equations with critical growth term \footnotemark[2] \footnotetext[2]
{Supported by the Natural Science Foundation of China (No.12571191, No.12501560) and the Hunan Basic Science Research Center for Mathematical Analysis (2024JC2002). }}
\author{Zhouxin Li$^a$\quad Ruishu Liu$^{b,c}$\thanks{Corresponding author}\\
\small\it $^a$School of Mathematics and Statistics, HNP-LAMA, Central South University,\\
\small\it Changsha, {\rm 410083}, China\\
\small\it E-mail: lzx@math.pku.edu.cn\\
\small\it $^b$School of Artificial Intelligence, Shenzhen University,\\
\small\it Shenzhen, {\rm 518060}, China\\
\small\it $^c$National Engineering Laboratory for Big Data System Computing Technology, Shenzhen University,\\
\small\it Shenzhen, {\rm 518060}, China\\
\small\it E-mail: rsliu@szu.edu.cn
}

\date{}
\maketitle \noindent{\small{\bf Abstract}\quad
We study normalized solutions to the 1-Laplace equation
\[
-\mbox{div}\left(\frac{Du}{|Du|}\right)+\lambda \frac{u}{|u|}=f(u)+|u|^{1^*-2}u,\quad\mbox{in}~ \mathbb{R}^N
\]
subject to the constraint $\int_{\mathbb{R}^N}|u| \, \mbox{d}x=m$, where $N\geq2$, $1^*=\frac{N}{N-1}$ is the critical Sobolev exponent for the embedding from BV into $L^p$, $\lambda, m\in\mathbb{R}^+$ and $f:\mathbb{R}\to\mathbb{R}$ is a continuous nonlinear perturbation.
We first establish a Pohozaev-type identity for the equation and use it to study the associated critical energy, and then prove the existence of normalized solutions via nonsmooth critical point theory.
A key feature of our approach is the proof of the existence of a Palais-Smale sequence for a Lipschitz continuous functional under a Lipschitz continuous constraint.
This is significantly different in methodology from previous problems with $C^2$ constraint.


\noindent{\small{\bf Keywords}\quad Normalized solutions; 1-Laplace equations; Nonsmooth critical point theory} \\
\noindent{\small{\bf  MR(2020) Subject Classification}\quad 35J20; 35J75; 49J52}
\vspace{0.5cm}

\section{Introduction}
\indent\indent
In this paper, we study normalized solutions to a class of 1-Laplace equations with a critical growth term. 
More precisely, we consider the existence of a pair $(\lambda,u) \in \mathbb{R}\times BV(\mathbb{R}^N)$ satisfying
\begin{eqnarray}\label{eq:1-Lap}
    -\mbox{div}\left(\frac{Du}{|Du|}\right)+\lambda \frac{u}{|u|}=f(u)+|u|^{1^*-2}u, & \hbox{in $\mathbb{R}^N$}
\end{eqnarray}
subject to the constraint
\begin{eqnarray}\label{eq:constraint}
    \int_{\mathbb{R}^N}|u| \, \mbox{d}x=m, & \hbox{}
\end{eqnarray}
where $1^*=\frac{N}{N-1}$, $N\geq2$, $\lambda, m\in\mathbb{R}^+$, and $f:\mathbb{R}\to\mathbb{R}$ is a nonlinear perturbation;
here $BV(\mathbb{R}^N)$ denotes the space of functions of bounded variation on $\mathbb{R}^N$ (see \eqref{def:TV} below).

The motivation for studying this problem comes from the Schr\"{o}dinger equation in physics.
It is well known that the quasilinear Schr\"{o}dinger equation
\begin{eqnarray*}
i\frac{\partial\phi}{\partial t}=-\Delta\phi+W(x)\phi-g(|\phi|^2)\phi, & \hbox{in $\mathbb{R}\times\mathbb{R}^N$}
\end{eqnarray*}
has been widely studied by many authors. 
In particular, the study of standing-wave solutions, i.e. $\phi=e^{-iE t}u(x)$, transforms this equation into a quasilinear elliptic equation
\begin{eqnarray*}
-\Delta u+V(x) u-f(u)=0, & \hbox{in $\mathbb{R}^N$},
\end{eqnarray*}
where the potential $V(x)=W(x)+E$ and $f(u)=g(|u|^2)u$.
One can refer to \cite{ABC97,FlWe86,Rabi92} for semiclassical solutions, to \cite{PiFe96,Wang93} for concentration of solutions and to \cite{Gui96,Oh90} for multi-bump solutions.

In recent years, the study of the existence of solutions to the Schr\"{o}dinger equation
\begin{eqnarray*}\label{eq.sch1}
-\Delta u+\lambda u=f(u),\quad\mbox{in~}\mathbb{R}^N
\end{eqnarray*}
satisfying
\[
\int_{\mathbb{R}^N}u^2 \, \mbox{d}x=m^2,\quad u\in H^1(\mathbb{R}^N)
\]
which are called normalized solutions, has attracted considerable attention and has been intensively studied. 
In this equation, the parameter $\lambda$ can be regarded not only as a constant potential but also as a Lagrange multiplier under the so-called mass constraint.

One significant and early result was obtained by Jeanjean in \cite{Jean97}. 
He considered the semilinear elliptic equation
\begin{eqnarray*}
-\Delta u+\lambda u=g(u),\quad\mbox{in~}\mathbb{R}^N
\end{eqnarray*}
for $N\geq1$, $\lambda\in\mathbb{R}$ under the assumptions that $g$ satisfies

($H_1$) $g\in C(\mathbb{R},\mathbb{R})$ and $g$ is odd;

($H_2$) there exist $\alpha, \beta\in\mathbb{R}$ with $2+\frac{4}{N}<\alpha\leq\beta<2^*$ such that
\[
0<\alpha G(t)\leq g(t)t\leq\beta G(t)
\]
for all $t\in\mathbb{R}\setminus\{0\}$, where $G(t)=\int_0^tg(s)\, \mbox{d}s$;

($H_3$) $\tilde{G}(t):=g(t)t-2G(t)\in C^1(\mathbb{R}, \mathbb{R})$ and
\[
\tilde{G}'(t)t>\left(2+\frac{4}{N}\right)\tilde{G}(t)
\]
for all $t\in\mathbb{R}\setminus\{0\}$.

We mention that in ($H_2$), the number $2_*=2+\frac{4}{N}$ is called the critical mass exponent for normalized solutions, while
$2^*$ is the critical Sobolev exponent of the embedding $H^{1}({\mathbb{R}^N})\hookrightarrow
L^q({\mathbb{R}^N})$, i.e. $2^*=2N/(N-2)$ for $N\geq3$ and $2^*=+\infty$ for $N=1,2$.

Jeanjean proved in \cite{Jean97} that there exists a pair $(\lambda_c, u_c)\in \mathbb{R}\times H^1(\mathbb{R}^N)$ with $\|u_c\|_{L^2(\mathbb{R}^N)}=c$, provided that $N\geq2$ and ($H_1$)-($H_2$) hold, or $N\geq1$ and ($H_1$)-($H_3$)hold.
As for the multiplicity of solutions, assuming that $N\geq2$ and $f$ satisfying ($H_1$)-($H_2$), Bartsch and de Valeriola proved in \cite{BaVa13} the existence of infinitely many radially symmetric solutions via the fountain theorem.
Later, Ikoma and Tanaka gave another proof of multiplicity in \cite{IkTa19} via an idea related to symmetric mountain pass theorems. 
Recently, the study of normalized solutions has been extended to problems with critical $2^*$ growth. 
One can refer to Wei and Wu \cite{WeWu22} for nonlinear terms of the form $f(u)=\mu|u|^{q-2}u+|u|^{2^*-2}$ with $2<q<2_*$ or $2_*\leq q<2^*$, and to Qi and Zou \cite{QiZo23} for $f(u)=|u|^{q-2}u+|u|^{p-2}$ with $2<p<q\leq2^*$ or $2_*<p\leq q=2^*$ or $2_*<p<q\leq 2^*$. 
For other problems involving normalized solutions, we also refer to \cite{ApSe21} for fractional nonlinear Schr\"{o}dinger equations and to \cite{Soa201,Soa202} for modified nonlinear Schr\"{o}dinger equations. 
We mention here that there are numerous references on this subject, here we list only a few relevant ones.

Another interesting problem is the behavior of solutions related to the constraint $m$. 
In \cite{JeLu20}, Jeanjean and Lu not only proved the existence of ground states, but also revealed the basic behavior of the ground state energy $E_m$ as $m > 0$ varies. 
Under proper assumptions on the nonlinear term $f$, they showed that the function $m\to E_m$ is positive, continuous, non-increasing and that $\lim_{m\to0+}E_m=+\infty$, among other properties. 
Recently, Li and Zou \cite{LiZo24} extended this problem to equations involving a critical $2^*$ growth term and showed similar properties for $m\to E_m$.

Inspired by \cite{Jean97,JeLu20,LiZo24}, in this paper we focus on normalized solutions to equations with principal operator of the form 1-Laplacian and the nonlinear perturbation at critical growth, that is, equations of the form (\ref{eq:1-Lap}).

Let $F(u)=\int_0^uf(s)\, \mbox{d}s$. 
We assume that the function $f$ is of class $C^0$ and satisfies the following conditions:

($f_1$) $\lim_{t\to0}\frac{f(t)}{|t|^{\frac{1}{N}}}=0$;

($f_2$) $\lim_{|t|\to\infty}\frac{f(t)}{|t|^{1^*-1}}=0$;

($f_3$) there exists $\theta\in(1+\frac{1}{N},1^*)$ such that $f(t)t\geq\theta F(t)>0$, $\forall t\in\mathbb{R}\setminus\{0\}$;


($f_4$) the map $t\mapsto \frac{\widetilde{F}(t)}{|t|^{1+\frac{1}{N}}}$ is strictly decreasing on $(-\infty,0)$ and strictly increasing on $(0,+\infty)$, where $\widetilde{F}(t)=f(t)t-F(t)$;

($f_5$) $f(t)t<1^*F(t)$, $\forall t\neq0$;

($f_6$) $\lim_{t\to0}\frac{f(t)t}{|t|^{1^*}}=+\infty$.

We mention that the number $1_*=1+\frac{1}{N}$ is the critical mass exponent for normalized solutions of the 1-Laplace equation. 
By ($f_3$) we have
\begin{eqnarray}\label{ineq:f-F-1}
f(t)t>\left(1+\frac{1}{N}\right)F(t)>0,\quad\forall t\in\mathbb{R}\setminus\{0\}.
\end{eqnarray}
Moreover, by ($f_1$)-($f_2$), we can prove that for any $\varepsilon>0$, there exists $C_\varepsilon>0$ such that
\begin{eqnarray}\label{lem:2-3-1}
|f(t)|\leq\varepsilon|t|^{\frac{1}{N}}+C_\varepsilon|t|^{1^*-1}
\end{eqnarray}
and
\begin{eqnarray}\label{lem:2-3-2}
|f(t)|\leq\varepsilon|t|^{1^*-1}+C_\varepsilon|t|^{\frac{1}{N}}.
\end{eqnarray}

Let $\|\cdot\|_r$ denote the norm of $L^r({\mathbb{R}}^N)$ for $1\leq r<+\infty$,
i.e. $\|u\|_r=(\int_{{\mathbb{R}}^N}|u|^r \, \mbox{d}x)^{\frac{1}{r}}$,
and let $\|\cdot\|_{\infty}$ denote the norm of $L^{\infty}({\mathbb{R}}^N)$.
We say that $u\in L^1({\mathbb{R}}^N)$ is a function of bounded variation in ${\mathbb{R}}^N$
if the total variation of $u$ in ${\mathbb{R}}^N$
\begin{eqnarray}\label{def:TV}
|Du|({\mathbb{R}}^N)=\sup\bigg\{\int_{{\mathbb{R}}^N} u{\rm div}\varphi \, \mbox{d}x: \varphi\in C_c^1({\mathbb{R}}^N,{\mathbb{R}}^N),\|\varphi\|_\infty\leq1\bigg\}
\end{eqnarray}
is finite. 
The space of all such functions 
is denoted by $BV({\mathbb{R}}^N)$,
which is a Banach space endowed with the norm
\[
\|u\|_{BV}=|Du|({\mathbb{R}}^N)+\|u\|_1.
\]

According to \cite[Chapter 3]{AFP00}, the embedding $BV({\mathbb{R}}^N)\hookrightarrow L^p({\mathbb{R}}^N)$ is continuous for
$1\leq p\leq 1^*=N/(N-1)$ and is locally compact for $1\leq p<1^*$. 
Hence for $u\in BV({\mathbb{R}}^N)$, the integrals $\int_{{\mathbb{R}}^N}F(u)\mbox{d}x$, $\int_{{\mathbb{R}}^N}f(u)u \, \mbox{d}x$ and $\int_{{\mathbb{R}}^N}|u|^{1^*}\mbox{d}x$ are well defined.

One important tool in the study of normalized solutions is the Pohozaev identity. 
Therefore, in this paper we introduce a Pohozaev-type identity for the 1-Laplace equation (\ref{eq:1-Lap}). 
To be more general, we assume 

($f_0$) there exist constants $q\in(1,1^*)$ and $C>0$ such that
\[
|f(t)|\leq C(1+|t|^{q-1}),\quad \forall t\in\mathbb{R}.
\]

\begin{thm}\label{thm:Poho}
Assume that ($f_0$) holds and that $u$ is a solution to (\ref{eq:1-Lap}) with $u\in W^{1,1}(\mathbb{R}^N)$ and $x\cdot Du\in W^{1,1}(\mathbb{R}^N)$. 
Then the following Pohozaev-type equality holds:
\begin{eqnarray*}\label{eq:Poho}
\int_{\mathbb{R}^N}|Du|-N\int_{\mathbb{R}^N}\left(f(u)u-F(u)\right) \, \mbox{d}x-\int_{\mathbb{R}^N}|u|^{1^*} \, \mbox{d}x=0.
\end{eqnarray*}
\end{thm}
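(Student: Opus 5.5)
Since $u\in W^{1,1}(\mathbb{R}^N)$, I interpret "solution" in the Anzellotti sense. There is a vector field $z\in L^\infty(\mathbb{R}^N,\mathbb{R}^N)$ with $\|z\|_\infty\le1$ and $z\cdot\nabla u=|\nabla u|$ a.e. There is also a measurable $s$ with $|s|\le1$ and $s|u|=u$ a.e. These satisfy
\[
-\mbox{div}\, z+\lambda s=f(u)+|u|^{1^*-2}u
\]
in the weak sense, that is,
\[
\int_{\mathbb{R}^N} z\cdot\nabla\varphi\, \mbox{d}x+\lambda\int_{\mathbb{R}^N}s\varphi\, \mbox{d}x=\int_{\mathbb{R}^N}\bigl(f(u)+|u|^{1^*-2}u\bigr)\varphi\, \mbox{d}x
\]
for test functions $\varphi\in W^{1,1}(\mathbb{R}^N)$. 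Test functions in $W^{1,1}\cap L^\infty$ are allowed by density. By ($f_0$) and the embedding $W^{1,1}\hookrightarrow L^{1^*}$, one can extend to $\varphi\in W^{1,1}$: the right-hand side is then integrable, using $|f(u)\varphi|\le C(|\varphi|+|u|^{q-1}|\varphi|)$ and interpolation between $L^1$ and $L^{1^*}$. The plan is to test the equation with $\varphi=x\cdot\nabla u$, which is admissible by hypothesis. I will compute each term through the dilation identity behind the classical Pohozaev argument.

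\textbf{Right-hand side.} Formally,
\[
(f(u)+|u|^{1^*-2}u)\,x\cdot\nabla u=x\cdot\nabla\Bigl(F(u)+\tfrac{1}{1^*}|u|^{1^*}\Bigr).
\]
Integrating by parts gives $-N\int (F(u)+\frac{1}{1^*}|u|^{1^*})$. To justify this, I will use the chain rule in $W^{1,1}_{loc}$ for $G(u)=F(u)+\frac1{1^*}|u|^{1^*}$. Then I will multiply by a cutoff $\psi_R(x)=\psi(x/R)$ and let $R\to\infty$. The boundary error is bounded by $\int_{R<|x|<2R}|G(u)|\,|x|/R\le 2\int_{|x|>R}|G(u)|\to0$. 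Similarly, the $\lambda$-term $\lambda\int s\,x\cdot\nabla u=\lambda\int x\cdot\nabla|u|$ equals $-N\lambda\int|u|$.

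\textbf{Gradient term.} I need $\int z\cdot\nabla(x\cdot\nabla u)$. Formally, $\nabla(x\cdot\nabla u)=\nabla u+(D^2u)x$. Since $|z|\le1$ and $z\cdot\nabla u=|\nabla u|$, $z$ maximizes $\xi\mapsto z\cdot\xi$ over $|z|\le 1$ at $\xi=\nabla u$. The scaling trick is to compare $u_t(x)=u(tx)$. The function
\[
t\mapsto\int z\cdot\nabla u_t\,\mbox{d}x\cdot t^{N-1}
\]
is dominated by
\[
t\mapsto t^{N-1}\int|\nabla u_t|=\int|\nabla u|
\]
(this is constant), and equality holds at $t=1$. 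Hence its derivative at $t=1$ vanishes. Using $\frac{d}{dt}u_t|_{t=1}=x\cdot\nabla u$, this gives
\[
\int z\cdot\nabla(x\cdot\nabla u)+(N-1)\int z\cdot\nabla u=0,\qquad\text{i.e.}\qquad \int z\cdot\nabla(x\cdot\nabla u)=-(N-1)\int|\nabla u|.
\]
Rigorously, I will write $\Phi(t)=t^{N-1}\int z\cdot\nabla u_t$ and use $\Phi(t)\le\Phi(1)$. The one-sided difference quotients $(u_t-u)/(t-1)$ converge to $x\cdot\nabla u$ in $W^{1,1}$. This follows from the assumption $x\cdot\nabla u\in W^{1,1}$ via continuity of dilations and a standard mollification argument. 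This is the main technical obstacle: the convergence of difference quotients must hold in $W^{1,1}$ norm, not merely weakly, so that the pairing with $z\in L^\infty$ passes to the limit. I would prove it by approximating $u$ with $C_c^\infty$ functions $u_k$ such that both $u_k\to u$ and $x\cdot\nabla u_k\to x\cdot\nabla u$ in $W^{1,1}$. For smooth functions the identity $\frac{u_t-u}{t-1}=\int_0^1 (x\cdot\nabla u)(\cdot\,(1+\sigma(t-1)))\,\mbox{d}\sigma$ together with continuity of dilations in $W^{1,1}$ gives the claim.

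\textbf{Combining.} The tested equation now reads
\[
-(N-1)\int|Du|-N\lambda\int|u|=-N\int F(u)-(N-1)\int|u|^{1^*}.
\]
Testing the equation with $\varphi=u$ gives
\[
\int|Du|+\lambda\int|u|=\int f(u)u+\int|u|^{1^*}.
\]
I eliminate $\lambda$ by multiplying the second identity by $N$ and adding it to the first. This yields
\[
\int|Du|=N\int\bigl(f(u)u-F(u)\bigr)+\int|u|^{1^*},
\]
which is the claimed identity.
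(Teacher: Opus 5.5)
Your proof is correct. Its skeleton is the paper's: test with $x\cdot\nabla u$, integrate the right-hand side and the $\lambda$-term by parts, test with $u$, and eliminate $\lambda$.

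The genuine difference is how you obtain the key identity $\int z\cdot\nabla(x\cdot\nabla u)\,\mathrm{d}x=-(N-1)\int|\nabla u|\,\mathrm{d}x$. The paper writes $D(|Du|)=z\cdot D^2u$ and integrates $x\cdot D(|Du|)$ by parts. That step silently needs three things beyond the hypotheses:
\begin{itemize}
\item $D^2u$ existing as a locally integrable function, whereas the hypothesis only gives $x\cdot D^2u\in L^1$;
\item the identification $z=Du/|Du|$;
\item a separate argument on $\{Du=0\}$.
\end{itemize}
Your scaling argument avoids all of this. It uses only that $v\mapsto\int z\cdot\nabla v$ is linear, is bounded by $\int|\nabla v|$ because $|z|\le1$, and touches it at $u$. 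It also uses that $t\mapsto t^{N-1}u(t\,\cdot)$ has constant total variation and is differentiable at $t=1$ in $W^{1,1}$, which is exactly what $x\cdot\nabla u\in W^{1,1}$ supplies. So your route is rigorous under the stated hypotheses and never needs $z$ on $\{\nabla u=0\}$. The paper's route is the more familiar direct computation but is formal as written.

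Two small repairs are needed.

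\emph{The smooth-case formula.} It should read
\[
\frac{u_t-u}{t-1}=\int_0^1 x\cdot(\nabla u)\bigl((1+\sigma(t-1))x\bigr)\,\mathrm{d}\sigma .
\]
With $w=x\cdot\nabla u$, the integrand is $w(sx)/s$ with $s=1+\sigma(t-1)$, not $w(sx)$. The missing factor is harmless as $t\to1$. In fact $u(t\,\cdot)-u=\int_1^t s^{-1}w(s\,\cdot)\,\mathrm{d}s$ can be verified directly in $\mathcal{D}'$ for $u\in W^{1,1}$. Continuity of dilations on $w\in W^{1,1}$ then gives the $W^{1,1}$ convergence of the difference quotients, with no approximation by $C_c^\infty$ functions needed. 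Weak $L^1$ convergence of the gradients would already suffice against $z\in L^\infty$.

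\emph{The chain rule for $G(u)$.} $G(u)$ need not lie in $W^{1,1}_{\mathrm{loc}}$, because $|u|^{1^*-1}|\nabla u|$ need not be locally integrable. You only need
\[
\int G'(u)\,(x\cdot\nabla u)\,\psi_R\,\mathrm{d}x=-\int G(u)\,\mathrm{div}(x\psi_R)\,\mathrm{d}x .
\]
This follows by applying the chain rule to the truncation $T_ku$ and letting $k\to\infty$ by dominated convergence. The majorant $|G'(u)\,x\cdot\nabla u|\in L^1$ is the one you already obtained from H\"older's inequality with $|u|^{1^*-1}\in L^N$ and $x\cdot\nabla u\in L^{1^*}$.
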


We will use the above Pohozaev identity to investigate properties of the critical values of the functional associated with the equation.

For any given $m>0$, we are interested in the existence of solutions to (\ref{eq:1-Lap})-(\ref{eq:constraint}). 
According to the Lagrange multiplier rule, we know that solutions to \eqref{eq:1-Lap} correspond to the critical points of the functional $I:BV(\mathbb{R}^N)\to\mathbb{R}$ given by
\begin{eqnarray*}\label{eq:1-Lap-funct}
I(u)=\int_{\mathbb{R}^N}|Du|-\int_{\mathbb{R}^N}F(u) \, \mbox{d}x-\frac{1}{1^*}\int_{\mathbb{R}^N}|u|^{1^*}\mbox{d}x
\end{eqnarray*}
under the constraint $\|u\|_1=m$.

We define
\begin{eqnarray}\label{def:P(u)}
P(u)=\int_{\mathbb{R}^N}|Du|-N\int_{\mathbb{R}^N}\left(f(u)u-F(u)\right) \, \mbox{d}x-\int_{\mathbb{R}^N}|u|^{1^*} \, \mbox{d}x
\end{eqnarray}
and
\begin{eqnarray}\label{def:manifold-Sm}
S_m=\left\{u\in BV(\mathbb{R}^N):\int_{\mathbb{R}^N}|u| \, \mbox{d}x=m\right\}.
\end{eqnarray}
For any $m>0$, we define the Pohozaev manifold by
\begin{eqnarray*}\label{def:manifold-Pm}
\mathcal{P}_m=\{u\in S_m:P(u)=0\}
\end{eqnarray*}
and define
\begin{eqnarray*}\label{def:Em}
E_m=\inf_{u\in\mathcal{P}_m}I(u).
\end{eqnarray*}

Let $u\in BV({\mathbb{R}}^N)\cap L^{1^*}({\mathbb{R}}^N)$ and $z\in L^{\infty}({\mathbb{R}}^N,{\mathbb{R}}^N)$
with $\mbox{div}z\in L^{N}({\mathbb{R}}^N)$, here $\mbox{div}$ denotes the divergence in the distributional sense.
We define the functional $(z,Du):C_0^{\infty}({\mathbb{R}}^N)\to{\mathbb{R}}$ by
\begin{eqnarray}\label{def:z-Du}
\langle(z,Du),\varphi\rangle=-\int_{{\mathbb{R}}^N}u\varphi \, \mbox{div}z \, \mbox{d}x-\int_{{\mathbb{R}}^N}uz\cdot\nabla\varphi \,  \mbox{d}x.
\end{eqnarray}
Then $(z,Du)$ is a Radon measure in ${\mathbb{R}}^N$ and satisfies the following Gauss-Green formula \cite{AFP00}:
\begin{eqnarray}\label{Green}
\int_{{\mathbb{R}}^N}u\mbox{div}z\mbox{d}x+\int_{{\mathbb{R}}^N}(z,Du)=0.
\end{eqnarray}

For $u\in BV({\mathbb{R}}^N)$, let ${\mathcal{A}}(u)$ be the set of $u^*\in L_{N,\infty}({\mathbb{R}}^N)$ (defined by \eqref{def:L_N-00} in Section 2) such that
there exists some $z\in L^{\infty}({\mathbb{R}}^N,{\mathbb{R}}^N)$ with $\|z\|_\infty\leq1$, $-\mbox{div}z=u^*$, $(z,Du)=|Du|$.
Let ${\mathcal{B}}(u)$ be the set of $\gamma\in L^{\infty}({\mathbb{R}}^N)$ with $\|\gamma\|_\infty\leq1$ and $\gamma u=|u|$ a.e. in ${\mathbb{R}}^N$.
According to \cite{KaSc07}, to find solutions of the problem \eqref{eq:1-Lap}, it suffices to find $0\neq u\in BV({\mathbb{R}}^N)$ such that
\begin{eqnarray}\label{eq:z}
    \exists u^*\in {\mathcal{A}}(u),\gamma\in {\mathcal{B}}(u)\mbox{ such that}:\quad
    u^*+\lambda\gamma=f(u)+|u|^{1^*-2}u.
\end{eqnarray}
In this sense, we say that $u\in BV({\mathbb{R}}^N)$ is a solution to (\ref{eq:1-Lap}) if \eqref{eq:z} holds. 
Furthermore, if $u$ solves (\ref{eq:1-Lap})-(\ref{eq:constraint}) and $I(u)=\inf_{v\in S_m}I(v)$, we say that $u$ is a normalized ground state solution to (\ref{eq:1-Lap}).

Conditions ($f_1$)-($f_4$) are used to prove the existence of a minimizing Palais-Smale-type sequence for the functional $I$ on the constraint set $S_m$.
Condition ($f_5$) is used to recover the compactness of the minimizing sequence, while imposing condition ($f_6$) on the nonlinearity ensures that the functional has a critical value below the threshold, so that \eqref{eq:1-Lap}-\eqref{eq:constraint} has a positive normalized solution. 
More precisely, we show in Section \ref{sec:exist} that

\begin{thm}\label{thm:m1}
Assume that ($f_1$) to ($f_6$) hold. 
Then there exists $m_0>0$ such that problem (\ref{eq:1-Lap})-(\ref{eq:constraint}) has a ground state solution $u\in BV(\mathbb{R}^N)$ for any $m \geq m_0$.
Moreover, $I(u)=E_m$ and $P(u)=0$.
\end{thm}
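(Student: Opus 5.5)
The plan is to follow the Jeanjean--Lu strategy, adapted to the nonsmooth BV setting with the $L^1$ dilation $u_t(x)=t^{N}u(tx)$. This dilation preserves $\|u\|_1$, and it gives
\[
|Du_t|(\mathbb{R}^N)=t\,|Du|(\mathbb{R}^N),\qquad \int F(u_t)=t^{-N}\int F(t^Nu),\qquad \int|u_t|^{1^*}=t\int|u|^{1^*}.
\]
Hence $\frac{d}{dt}I(u_t)|_{t=1}=P(u)$.

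\textbf{Step 1: the fiber map.} For $u\in S_m$, I will use ($f_1$)--($f_4$) to show that $t\mapsto I(u_t)$ has a unique critical point $t_u>0$. This point is a strict global maximum, and $\mathcal{P}_m=\{u_{t_u}:u\in S_m\}$. Note that $|Du|-\int|u|^{1^*}$ scales linearly in $t$, so the critical term does not break uniqueness. The monotonicity in ($f_4$) is precisely what makes $t^{-1}\frac{d}{dt}I(u_t)$ strictly monotone.

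\textbf{Step 2: properties of $E_m$.} I will show that $E_m>0$ and that $E_m=\inf_{u\in S_m}\max_{t>0}I(u_t)$. Positivity comes from the BV Sobolev inequality $\|u\|_{1^*}\le S^{-1}|Du|$ together with (\ref{lem:2-3-1}) on $\mathcal{P}_m$, and $\theta>1_*$ in ($f_3$) makes the subcritical part mass-supercritical.

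\textbf{Step 3: the threshold.} The key energy estimate is
\[
E_m<\frac1N S^N,
\]
the level at which $1^*$-bubbles (characteristic functions of balls, the extremals of the isoperimetric inequality) lose compactness. I will test with $u=\frac{m}{|B_R|}\chi_{B_R}$ and maximize in $t$; with the mass fixed, this is just a rescaling of the ball. The pure critical part gives exactly $\sup_t\big(t|Du|-\frac{t}{1^*}\int|u|^{1^*}\big)$-type values that realize $\frac1NS^N$ only asymptotically. Condition ($f_6$) then makes the $-\int F$ term large enough to push the maximum strictly below $\frac1NS^N$. This is where $m\ge m_0$ enters: large $m$ forces the maximizing dilation into the regime where $u$ is small pointwise, so ($f_6$) dominates. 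I expect to have to balance the heights in detail.

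\textbf{Step 4: a Palais--Smale sequence.} Using the nonsmooth critical point theory for Lipschitz functionals on the Lipschitz constraint $S_m$ (Chang/Degiovanni type, with Clarke subdifferential, in the sense of (\ref{eq:z})), I will construct a minimizing sequence $u_n$ at level $E_m$ with $P(u_n)\to0$. The construction works on the augmented functional $(u,s)\mapsto I(u_{e^s})$, as in Jeanjean's trick. This step is the main obstacle: the nonsmooth constraint $\|u\|_1=m$ prevents a tangent space argument, and the deformation must be built by hand. I will replace $S_m$ by the Lipschitz map $u\mapsto m u/\|u\|_1$ and apply Ekeland's principle to $I(m u/\|u\|_1)$ on a neighborhood, extracting multipliers $\lambda_n$ with $\gamma_n\in\mathcal{B}(u_n)$.

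\textbf{Step 5: compactness.} The sequence is bounded in BV because $P(u_n)\to0$ and ($f_3$) give this. After translation, concentration-compactness in BV (with $E_m>0$) excludes vanishing, so $u_n\rightharpoonup u\ne0$. From $\lambda_n\to\lambda$ and ($f_5$) I will obtain $\lambda>0$. The level bound of Step 3 excludes loss of mass to critical bubbles via the Brezis--Lieb splitting, and monotonicity of $m\mapsto E_m$ (from Step 1) excludes dichotomy, yielding $\|u\|_1=m$. Lower semicontinuity of total variation then gives $I(u)\le E_m$ with $u\in\mathcal{P}_m$, so $I(u)=E_m$ and $P(u)=0$. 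Passing to the limit in (\ref{eq:z}) via weak-$*$ limits of $z_n,\gamma_n$ shows that $u$ solves (\ref{eq:1-Lap}). Replacing $u$ by $|u|$ gives positivity.
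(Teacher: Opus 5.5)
The genuine gap is the exclusion of dichotomy in Step 5. What is actually available (Lemma \ref{lem:2-10}) is only that $m\mapsto E_m$ is \emph{nonincreasing}. Its proof, which adds a far-away bump $\kappa*\widetilde v$ with $\kappa\to-\infty$, is exactly the scenario you must rule out: part of the $L^1$-mass runs off to infinity while spreading out, and its total variation, $L^{1^*}$-norm and energy all tend to $0$. Non-strict monotonicity is compatible with $E_{m_1}=E_m$ for some $m_1<m$, so it cannot force $\|u\|_1=m$.

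To close the step you would need two things. First, a strict inequality: strict decrease of $E_m$, at least at masses where it is attained, or strict subadditivity. Second, a way to place the weak limit on $\mathcal P_{\|u\|_1}$, so that $I(u)\ge E_{\|u\|_1}$. Neither is supplied:
\begin{itemize}
\item Lower semicontinuity of the total variation only gives $P(u)\le 0$.
\item The Pohozaev identity, which does this job in the smooth Jeanjean--Lu argument, is only proved (Theorem \ref{thm:Poho}) for $W^{1,1}$ solutions with $x\cdot Du\in W^{1,1}$.
\item The multipliers $\lambda_n\to\lambda$ you rely on are not produced by your Step 4 sketch. On the merely Lipschitz set $S_m$ the paper obtains only the one-sided inequality of Corollary \ref{cor:4-6} along $\ker\mathcal F'(v_n)$, and a multiplier only for the limit.
\end{itemize}
The paper itself notes that strict subadditivity needs further monotonicity assumptions on $f$. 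A telltale sign: if Step 5 worked as written, your argument would give the theorem for \emph{every} $m>0$ and without $(f_6)$ (see below), which the paper does not obtain.

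This is tied to a misplacement of the hypotheses in Step 3. First, $\int|u_t|^{1^*}=t^{1^*}\int|u|^{1^*}$, not $t\int|u|^{1^*}$, so the critical term is not linear along the fibers. With the correct scaling, and with $a=|Du|(\mathbb{R}^N)$ and $b=\int|u|^{1^*}$, one has $\max_{t>0}\bigl(ta-\frac{t^{1^*}}{1^*}b\bigr)=\frac1N a^Nb^{1-N}$. For $u=c\chi_{B_R}$ this equals $\frac1N S^N$ \emph{exactly}, not only asymptotically, because balls attain the BV Sobolev constant. Since $F>0$, this already gives $E_m<\frac1N S^N$ for all $m>0$, with no use of $(f_6)$ or of large $m$.

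In the paper, $(f_6)$ and $m\ge m_0$ serve a different purpose. They give $E_m\to0^+$ (Lemma \ref{lem:2-13}, via the small-amplitude mechanism you describe), so that $\gamma_m\le E_m$ (Lemma \ref{lem:2-14}) is \emph{small}, not merely below $\frac1N S^N$. Smallness yields $k_n=\langle\mathcal F'(v_n),v_n\rangle\le1$ (Proposition \ref{prop:4-7}). Testing Corollary \ref{cor:4-6} with $\pm\xi^{(1)}_R$ then shows that $\|Dv_n\|_1$ splits in the same ratio as $k_n$. This, together with $P(v_n)\to0$, $k_n\le1$ and $(f_5)$, is what Proposition \ref{prop:4-8} uses against dichotomy. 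The threshold $\frac1N S^N$, with $\eta(v_n)\le1$, only excludes concentration (Proposition \ref{prop:loc-compact}).

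Two smaller points. First, you need the strict convergence $\int|Dv_n|\to\int|Dv_0|$, both for $P(v_0)=0$ and for any passage to the limit in $(z_n,Du_n)=|Du_n|$. The paper obtains it in Proposition \ref{prop:4-9} by testing the variational inequality with $h=v_0-\frac1{k_n}\langle\mathcal F'(v_n),v_0\rangle v_n$, and your plan has no substitute for this. Second, replacing $u$ by $|u|$ is unjustified since $f$ is not assumed odd; positivity is not part of the statement anyway.
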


Theorem \ref{thm:m1} shows that each weak solution of \eqref{eq:1-Lap}-\eqref{eq:constraint} satisfies the Pohozaev equality given in Theorem \ref{thm:Poho} and thus belongs to the Pohozaev manifold $\mathcal{P}_m$. It also shows that
\[
\inf_{v\in S_m}I(v)=\inf_{v\in\mathcal{P}_m}I(v)=E_m.
\]

As a special case of \eqref{eq:1-Lap}, we consider $f(u)=|u|^{q-2}u$ with $1+\frac{1}{N}<q<1^*$. We have

\begin{thm}\label{thm:m3}
There exists $m_0>0$ such that the following problem has a ground state solution $u\in BV(\mathbb{R}^N)$ for any $m\geq m_0$:
\begin{eqnarray}\label{eq:1-Lap-u^q}
\left\{
  \begin{array}{ll}
    -\mbox{div}\left(\frac{Du}{|Du|}\right)+\lambda \frac{u}{|u|}=f(u)+|u|^{1^*-2}u, & \hbox{in $\mathbb{R}^N$;} \\
    \int_{\mathbb{R}^N}|u| \, \mbox{d}x=m. & \hbox{}
  \end{array}
\right.
\end{eqnarray}

\end{thm}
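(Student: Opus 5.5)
The plan is to obtain the statement as a direct instance of Theorem \ref{thm:m1}. The problem in \eqref{eq:1-Lap-u^q} is exactly \eqref{eq:1-Lap}--\eqref{eq:constraint}, with the full right-hand side $f(u)+|u|^{1^*-2}u$: a subcritical perturbation $f(u)=|u|^{q-2}u$, $1+\frac1N<q<1^*$, plus the critical Sobolev term. The critical term is already built into the functional $I$, the Pohozaev functional $P$ in \eqref{def:P(u)}, and the level $E_m$. It therefore needs no separate treatment, and it suffices to check that this $f$ satisfies ($f_1$)--($f_6$). Then Theorem \ref{thm:m1} yields $m_0>0$ and, for every $m\ge m_0$, a ground state $u\in BV(\mathbb{R}^N)$ of \eqref{eq:1-Lap-u^q} with $I(u)=E_m$ and $P(u)=0$.

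The verification is elementary, with $F(t)=\frac1q|t|^q$ and $f(t)t=|t|^q$.
\begin{itemize}
\item ($f_1$): $|f(t)|/|t|^{1/N}=|t|^{q-1-\frac1N}\to0$ as $t\to0$, since $q>1+\frac1N$.
\item ($f_2$): $|f(t)|/|t|^{1^*-1}=|t|^{q-1^*}\to0$ as $|t|\to\infty$, since $q<1^*$.
\item ($f_3$): take $\theta=q\in(1+\frac1N,1^*)$. Then $f(t)t=qF(t)=\theta F(t)>0$ for $t\neq0$.
\item ($f_4$): $\widetilde F(t)=(1-\frac1q)|t|^q$, so $\widetilde F(t)/|t|^{1+\frac1N}=(1-\frac1q)|t|^{q-1-\frac1N}$. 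The exponent is positive, so this is strictly increasing on $(0,\infty)$ and, by evenness, strictly decreasing on $(-\infty,0)$.
\item ($f_5$): $f(t)t=qF(t)<1^*F(t)$ for $t\ne0$, since $q<1^*$.
\item ($f_6$): $f(t)t/|t|^{1^*}=|t|^{q-1^*}\to+\infty$ as $t\to0$.
\end{itemize}
Continuity of $f$ is clear because $q>1$.

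To make the role of the critical term explicit, I would recall what Theorem \ref{thm:m1} supplies in this setting. The dilation $u_t(x)=t^Nu(tx)$ preserves $\|u\|_1$ and gives
\[
I(u_t)=t\int|Du|-\frac{t^{N(q-1)}}{q}\int|u|^q\,dx-\frac{t}{1^*}\int|u|^{1^*}\,dx .
\]
Since $N(q-1)>1$, each fiber has a unique maximum, located on $\mathcal P_m$; this is ($f_4$). Theorem \ref{thm:m1} then uses a minimizing Palais--Smale-type sequence for $I$ on the Lipschitz constraint $S_m$. Compactness rests on two ingredients. First, $E_m$ lies strictly below the threshold at which the critical term can concentrate. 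This is where ($f_6$), i.e.\ $q<1^*$, enters: the $-|t|^q$ term dominates near zero for spread-out test functions. Second, ($f_5$) excludes vanishing and dichotomy.

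The only genuinely nontrivial point is that threshold step, and it is handled inside Theorem \ref{thm:m1}. Here it reduces to $q<1^*$ together with the requirement $m\ge m_0$, which makes the mass large enough for the subcritical term to lower the energy below the critical level. So once ($f_1$)--($f_6$) are checked as above, no further obstacle remains.
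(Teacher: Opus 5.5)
Your proposal is correct and matches the paper, which treats Theorem \ref{thm:m3} as the special case $f(u)=|u|^{q-2}u$ of Theorem \ref{thm:m1}; your check of ($f_1$)--($f_6$), with $\theta=q$ in ($f_3$), is exactly what that reduction needs. One slip in your expository paragraph does not affect the argument: under $u_t(x)=t^Nu(tx)$ the critical term scales as $\frac{t^{1^*}}{1^*}\int|u|^{1^*}$, not $\frac{t}{1^*}\int|u|^{1^*}$ (compare \eqref{eq:I(tau-u)}).
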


For the study of the 1-Laplace operator, we would like to mention some literature.
In \cite{KaSc07}, Kawohl and Schuricht provided a reasonable definition of the 1-Laplace operator, demonstrating that solving the 1-Laplace equation is essentially equivalent to solving a differential embedding problem, and then investigated the associated eigenvalue problem. 
In \cite{DeMa09}, Degiovanni and Magrone proved a linking theorem for nonsmooth functionals and employed it to study the existence of solutions to the Br\'{e}zis-Nirenberg problem concerning the 1-Laplace operator. 
Chang, in \cite{Chang09}, proved the existence of a sequence of eigenvalues and eigenfunctions for the 1-Laplace operator via critical point theorems for nonsmooth functionals in \cite{CoDM93,DeMa94}, and presented explicit forms of (strong) eigenvalues and (strong) eigenfunctions in one-dimensional cases. 
In \cite{FiPi18}, Figueiredo and Pimenta investigated the existence of solutions in the entire space for problems involving vanishing potentials, while in \cite{AlPi17}, Alves and Pimenta proved existence and concentration of solutions for parameter-dependent problems. 
In \cite{LiL21}, Li and Liu also applied critical point theorems for nonsmooth functionals to investigate the existence and concentration of solutions for parameter-dependent problems, and established exponential decay properties of the solutions.

The first difficulty is that $I:BV(\mathbb{R}^N)\to\mathbb{R}$ is not $C^1$. 
It is the difference of a convex locally Lipschitz functional and a smooth one, and it is not G\^{a}teaux differentiable in all directions in $BV(\mathbb{R}^N)$. 
In this situation, we need to define its critical points when searching for weak solutions of the equation by variational methods.

The second difficulty is that the dual space of the BV space remains unclear, which poses significant challenges to the creative application of variational methods in studying solutions. 
This is because most such methods require the use of the G\^{a}teaux derivative of the functional, which theoretically belongs to the dual space of the BV space. 
To address this issue, Kawohl and Schuricht proposed an approach in \cite{KaSc07} by extending the domain of the functional from the BV space to the $L^p$ space, transforming it into a lower semicontinuous functional. 
In this case, if the G\^{a}teaux derivative exists, it belongs to the $L^{p'}$ space. 
A drawback of this method is that proving compactness of Palais-Smale sequences for lower semicontinuous functionals is extremely difficult. 
Employing the theory in \cite{Anze85}, Alves and Pimenta in \cite{AlPi17} identified certain G\^{a}teaux differentiable directions for the functional. 
With these directions, combined with variational methods and the subdifferential theory of convex functionals, one can establish related new variational theories, thereby providing a way to resolve the compactness issue for PS sequences.

In Section \ref{sec:exist}, we construct a minimizing Palais-Smale sequence for the constrained functional $I|_{S_m}$ on the manifold $S_m$. This is a key step in the study.
Note that an important difference between the problem studied here and the $p$-Laplace problem with $p>1$ is that the manifold constrained by the $L^1$-norm is only Lipschitz continuous, not $C^1$. 
Therefore, not all elements have tangent spaces. 
This makes it impossible to use the method of constructing an approximate minimizing Palais-Smale sequence in reference \cite{JeLu20}, so new methods are used.
Inspired by \cite{Jean97}, we present an approach for constructing a minimizing Palais-Smale sequence of the constrained functional and to recover the compactness of this sequence. 
Our method is based on the framework of nonsmooth analysis in \cite{Szul86} and the proof process is relatively abstract, see Lemma \ref{lem:4-3}. 
Due to the nonsmoothness of the constrained manifold, this is a difficult task.
During the proof, we can clearly discern the geometric meaning of the Pohozaev identity.
To our knowledge, this is the first paper dealing with normalized solutions under a Lipschitz continuous constraint.

This paper is organized as follows. In Section \ref{sec:Pre}, we recall some mathematical tools concerning BV space and some related ideas on variational methods for nonsmooth functionals, and then prove the Pohozaev-type equality; 
in Section \ref{sec:exist}, we construct a minimizing Palais-Smale sequence and recover its compactness; 
in Section \ref{sec:Em}, we derive some properties of the constrained critical values of the functional, which are related to the proof of the main theorems, and then prove Theorem \ref{thm:m1}.

\section{Mathematical tools and variational framework}\label{sec:Pre}
\indent\indent
In the previous section, we provided the definition of the BV space.
In this section, we extend the domain of the functional to a larger function space, making it lower semicontinuous. 
We will discuss the relationship between the G\^{a}teaux derivative and the subdifferential of the functional, and then combine the subdifferential method to provide an approach to finding the critical points of the functional. 
This idea is closely related to that in \cite{KaSc07}, see also \cite{AlPi17,LiL21}.

Recall from \cite[Chapter 3]{AFP00} that $C_0^\infty(\mathbb{R}^N)$ is not dense in $BV(\mathbb{R}^N)$ with respect to strong convergence, since the closure of $C_0^\infty(\mathbb{R}^N)$ in the BV norm is equal to $W^{1,1}(\mathbb{R}^N)$, which is a proper subspace of $BV(\mathbb{R}^N)$. 
This fact leads to many difficulties, rendering the usual methods for studying p-Laplace equations ineffective. 
For instance, the most commonly used scaling map $t\to u({\cdot}/{t})$ may be discontinuous in BV space. 
However, there is a weaker convergence, called strict convergence \cite[Definition 3.14]{AFP00} (or intermediate convergence, cf. \cite{AlPi17}), which is more convenient in applications. 
We say that a sequence $\{u_n\}\subset BV(\mathbb{R}^N)$ converges to $u\in BV(\mathbb{R}^N)$ strictly if 
\[
u_n\to u,\quad\mbox{in~} L^1(\mathbb{R}^N)
\]
and 
\[
\int_{\mathbb{R}^N}|Du_n|\to\int_{\mathbb{R}^N}|Du|
\]
as $n\to\infty$. 
Moreover, $C_0^{\infty}(\mathbb{R}^N)$ is dense in $BV(\mathbb{R}^N)$ with respect to strict convergence.

According to the Radon-Nikod\'{y}m theorem (cf. \cite[Theorem 4.2.1]{ABM06}), for $u\in BV(\mathbb{R}^N)$, $Du$ can be decomposed into an absolutely continuous part $D^a u$ and a singular part $D^s u$, i.e. $Du=D^a u+D^s u$, and the (Lebesgue) derivative of $Du$ with respect to $|Du|$ is given by 
\[
\frac{Du}{|Du|}(x)=\lim_{\rho\to0}\frac{Du(B_\rho(x))}{|Du|(B_\rho(x))}.
\]
Here and in what follows, $B_\rho(x)$ denotes the ball centered at $x\in\mathbb{R}^N$ with radius $\rho>0$.

We define $J:BV(\mathbb{R}^N)\to\mathbb{R}$ by
\[
J(u)=\int_{\mathbb{R}^N}|Du|+\lambda\int_{\mathbb{R}^N}|u| \, \mbox{d}x.
\]
By definition, $J$ is convex and Lipschitz continuous. 
For a given $u\in BV(\mathbb{R}^N)$, $J$ is not differentiable at $u$ along all directions $v\in BV(\mathbb{R}^N)$. 
But fortunately, according to \cite{Anze85}, $J$ is differentiable at $u$ along directions $v\in BV(\mathbb{R}^N)$ such that $D^s v$ is absolutely continuous with respect to $D^s u$ and such that $v$ is equal to $0$ a.e. on the set where $u$ vanishes, and the directional derivative is (cf. \cite{AlPi17})
\begin{eqnarray}\label{def:J'}
  \langle J'(u),v\rangle &=& \int_{\mathbb{R}^N}\frac{Du^aD^a v}{|D^a u|} \, \mbox{d}x
+\int_{\mathbb{R}^N}\frac{Du}{|Du|}(x)\frac{Dv}{|Dv|}(x)|Dv|^s
+\lambda\int_{\mathbb{R}^N}\mbox{sgn}(u)v \,\mbox{d}x,
\end{eqnarray}
where $\mbox{sgn}(u(x))=0$ if $u(x)=0$ and $\mbox{sgn}(u(x))=u(x)/|u(x)|$ if $u(x)\neq0$. 
In particular,
$J'(u)u=J(u)$.

Let $\varphi\in C_0^\infty(\mathbb{R}^N)$, $u\in BV(\mathbb{R}^N)$ and $v=\varphi u$. Then $v\in BV(\mathbb{R}^N)$, $Dv=\varphi Du+uD\varphi$ and {$D^s v=\varphi D^su$} is absolutely continuous with respect to $D^s u$. 
Hence, $J$ is differentiable at $u$ along direction $v$ and the Calder\'{o}n-Zygmund Theorem \cite[Theorem 3.83]{AFP00} gives
\begin{eqnarray}\label{def:J'-1}
  \langle J'(u),v\rangle &=& \int_{\mathbb{R}^N}\varphi{|D^a u|}
+\int_{\mathbb{R}^N}\frac{D^auD\varphi}{|D^a u|}u\, \mbox{d}x
+\int_{\mathbb{R}^N}\varphi|Du|^s
+\lambda\int_{\mathbb{R}^N}\, \mbox{sgn}(u)v\, \mbox{d}x\nonumber\\
&=&\int_{\mathbb{R}^N}\varphi{|Du|}
+\int_{\mathbb{R}^N}\frac{D^auD\varphi}{|D^a u|}u \, \mbox{d}x
+\lambda\int_{\mathbb{R}^N} \mbox{sgn}(u)v \, \mbox{d}x.
\end{eqnarray}

For $u\in BV(\mathbb{R}^N)$, denote 
\begin{eqnarray}\label{def:J-1&J-2}
J_1(u)=\int_{\mathbb{R}^N}|Du|, \quad J_2(u)=\int_{\mathbb{R}^N}|u| \, \mbox{d}x
\end{eqnarray}
and
\begin{eqnarray}\label{def:F}
\mathcal{F}(u)=\int_{\mathbb{R}^N}F(u) \, \mbox{d}x+\int_{\mathbb{R}^N}|u|^{1^*} \, \mbox{d}x.
\end{eqnarray}
Then we have $\mathcal{F}\in C^1(BV(\mathbb{R}^N))$ and $I(u)=J_1(u)-\mathcal{F}(u)$. 
We define
\begin{eqnarray}\label{def:I-lambda}
I_\lambda(u)=J(u)-\mathcal{F}(u).
\end{eqnarray}
Let $\partial I_\lambda(u)$ be the Clarke generalized gradient of $I_\lambda$ at $u$ (cf. \cite{Clar76}), and assume that $0\in\partial I_\lambda(u)$. 
Since $I$ is convex on $BV(\mathbb{R}^N)$, $0\in\partial I_\lambda(u)$ implies that
\begin{eqnarray*}\label{ineq:subdiff}
  J(w)-J(u) &\geq& \mathcal{F}'(u)(w-u),\quad\forall w\in BV(\mathbb{R}^N).
\end{eqnarray*}
This means that $0\in\partial I_\lambda(u)$ is equivalent to $\mathcal{F}'(u)\in\partial J_1(u)$.

In order to find $z\in L^{\infty}({\mathbb{R}}^N,{\mathbb{R}}^N)$ such that $\mbox{Div}z\in \mathcal{A}(u)$ (note that $\gamma\in L^{\infty}({\mathbb{R}}^N)$ is much easier to obtain) and \eqref{eq:z} holds, we extend $I_\lambda$ to a larger function space.

Let $X=L^1(\mathbb{R}^N)\cap L^{1^*}(\mathbb{R}^N)$ be endowed with the norm
\[
\|u\|_X=\|u\|_1+\|u\|_{1^*},\quad u\in X.
\]
We extend $I_\lambda, J, \mathcal{F}$ to $X$.
We define $\overline{I}, \overline{J}, \overline{\mathcal{F}}:X\to\mathbb{R}\cup+\infty$ by
\begin{eqnarray*}
\overline{J}_1(u)&=&
\left\{
  \begin{array}{ll}
    J_1(u), & \hbox{if $u\in BV(\mathbb{R}^N)$;} \\
    +\infty, & \hbox{if $u\in X\setminus BV(\mathbb{R}^N)$,}
  \end{array}
\right.\\
\overline{J}_2(u)&=&\int_{\mathbb{R}^N}|u| \, \mbox{d}x,\quad u\in X, \\
\overline{\mathcal{F}}(u)&=&\int_{\mathbb{R}^N}F(u) \, \mbox{d}x+\int_{\mathbb{R}^N}|u|^{1^*} \, \mbox{d}x 
\end{eqnarray*}
and 
\[
\overline{J}=\overline{J}_1+\lambda\overline{J}_2,\quad
\overline{I}_\lambda=\overline{J}-\overline{\mathcal{F}}.
\]
By definition, $\overline{\mathcal{F}}\in C^1(X)$, $\overline{J}$ is convex and lower semicontinuous on $X$. 
According to \cite{AlPi17},
\[
\partial\overline{J}(u)=\partial\overline{J}_1(u)+\lambda\partial\overline{J}_2(u),\quad \forall u\in X.
\] 
Moreover, for $u\in BV(\mathbb{R}^N)$, $0\in\partial \overline{I}_\lambda(u)$ implies that
\begin{eqnarray*}\label{ineq:subdiff-X}
  \overline{J}(w)-\overline{J}(u) &\geq& \overline{\mathcal{F}}'(u)(w-u),\quad\forall w\in X
\end{eqnarray*}
and $\overline{\mathcal{F}}'(u)\in\partial \overline{J}(u)$.
Therefore, by Lemma 3 in \cite{AlPi17}, if $u\in BV(\mathbb{R}^N)$ is such that $0\in\partial {I}_\lambda(u)$,  then we also have $0\in\partial \overline{I}_\lambda(u)$ since $\overline{I}_\lambda=+\infty$ on $X\setminus BV(\mathbb{R}^N)$. 
This also implies that if $u\in BV(\mathbb{R}^N)$ is such that $\mathcal{F}'(u)\in\partial {J}(u)$, then $\overline{\mathcal{F}}'(u)\in\partial \overline{J}(u)$.

Note that the dual space $X'$ of $X$ is equivalent to $L_{\infty,N}(\mathbb{R}^N)$, where
\begin{eqnarray}\label{def:L_N-00}
L_{\infty,N}(\mathbb{R}^N)=\left\{g:\mathbb{R}^N\to\mathbb{R}~ \mbox{measurable}: \|g\|_{\infty,N}<+\infty\right\},
\end{eqnarray}
with norm
\[
\|g\|_{\infty,N}=\sup_{\|\phi\|_1+\|\phi\|_{1^*}\leq1}\left|\int_{\mathbb{R}^N}g\phi \, \mbox{d}x\right|.
\]
If $\overline{\mathcal{F}}'(u)\in\partial \overline{J}(u)$, then there exist $u^*\in \partial\overline{J}_1(u)$ and $\gamma^*\in \partial\overline{J}_2(u)$ such that
\[
\overline{\mathcal{F}}'(u)=u^*+\lambda\gamma^*\quad\mbox{in~}L_{\infty,N}(\mathbb{R}^N).
\]
Then following Proposition 4.23 in \cite{KaSc07}, there exist $z\in L^{\infty}(\mathbb{R}^N,\mathbb{R}^N)$ such that $\|z\|_\infty\leq1$, $(z,Du)=|Du|$, $-\mbox{div}z=u^*$ in $L_{\infty,N}(\mathbb{R}^N)$ and $\gamma\in L^\infty(\mathbb{R}^N)$ with $\|\gamma\|_\infty\leq1$, $\gamma u=|u|$ a.e. on $\mathbb{R}^N$. 
In conclusion, we have
\begin{eqnarray}\label{eq:1-Lap-z}
-\mbox{div}z+\lambda\gamma=f(u)+|u|^{1^*-2}u,\quad\mbox{in}~L_{\infty,N}(\mathbb{R}^N).
\end{eqnarray}
Thus, finding a solution of \eqref{eq:1-Lap-z} reduces to finding a $u\in BV(\mathbb{R}^N)$ with $J_2(u)=m$ and a $\lambda$ such that $0\in\partial I_\lambda(u)$, or equivalently $\mathcal{F}'(u)\in\partial J(u)$, in the topology of $BV(\mathbb{R}^N)$.

Since $BV({\mathbb{R}}^N)\subset L^{1}({\mathbb{R}}^N)$, for $u\in BV({\mathbb{R}}^N)\cap L^{1^*}({\mathbb{R}}^N)$ and $z\in L^{\infty}({\mathbb{R}}^N,{\mathbb{R}}^N)$
with $\mbox{div}z\in L_{\infty,N}({\mathbb{R}}^N)$ (divergence in the distributional sense), 
the definition of the measure $(z,Du):C_0^{\infty}({\mathbb{R}}^N)\to{\mathbb{R}}$ given by (\ref{def:z-Du}) is still valid and is a Radon measure also; moreover, the Green-Gauss formula (\ref{Green}) also holds.

At the end of this section, we establish the Pohozaev identity.
The proof is standard. For simplicity, we temporarily assume $\lambda=1$ in the proof. 
We remark that for problems without a critical growth term, a version of the identity on a domain $\Omega\subset\mathbb{R}^N$ with Lipschitz boundary was proved in \cite{SaLe18}, and a version of the identity in the entire space for radial solutions was proved in \cite{ChPi21}.

\begin{proof}[\it Proof of Theorem \ref{thm:Poho}]
Since $x\cdot Du\in W^{1,1}(\mathbb{R}^N)$, we have
\[
D(x\cdot Du)=Du+x\cdot D^2u\quad \mbox{in~}\mathcal{D}'(\mathbb{R}^N),
\]
where $D^2u$ denotes the second derivatives of $u$ in the distributional sense and
\[
(x\cdot D^2u)_j=\sum_{i=1}^Nx_i\frac{\partial^2u}{\partial x_i\partial x_j}.
\]
By assumption, we know that
\[
(x\cdot D^2u)_j\in L^1(\mathbb{R}^N),\quad j=1,\cdots,N.
\]
By Stampacchia's theorem, $D(x\cdot Du)=0$ a.e. on the set $\{x\cdot Du=0\}$. This implies that
\[
(x\cdot D^2u)_j=0\quad\mbox{a.e. in~}\{|Du|=0\}.
\]
Since $u\in W^{1,1}(\mathbb{R}^N)$ is a solution to (\ref{eq:1-Lap}), there exist $z\in L^{\infty}(\mathbb{R}^N,\mathbb{R}^N)$ with $\|z\|_\infty\leq1$, $-\mbox{div}z\in L_{\infty,N}(\mathbb{R}^N)$, $(z,Du)=|Du|$ and a $\gamma\in L^{\infty}(\mathbb{R}^N)$ with $\|\gamma\|_\infty\leq1$, $\gamma u=|u|$ such that (\ref{eq:z}) holds. Note that $x\cdot Du\in W^{1,1}(\mathbb{R}^N)$, we derive that
\begin{eqnarray}\label{thm:Poho-1}
\int_{\mathbb{R}^N}-(x\cdot Du) \, \mbox{div}z \, \mbox{d}x
&=&\int_{\mathbb{R}^N}(z,D(x\cdot Du)) \, \mbox{d}x\nonumber\\
&=&\int_{\mathbb{R}^N}(z\cdot Du)+\int_{\mathbb{R}^N}(z,x\cdot D^2u) \, \mbox{d}x\nonumber\\
&=&\int_{\mathbb{R}^N}|Du|+\int_{\mathbb{R}^N}(z,x\cdot D^2u) \, \mbox{d}x.
\end{eqnarray}
On the one hand, since $u\in W^{1,1}(\mathbb{R}^N)$ (we may assume that $|Du|>0$), we have $D(|Du|)=\frac{Du}{|Du|}\cdot D^2u=z\cdot D^2u$ in the weak sense, so that
\[
\int_{\mathbb{R}^N}x\cdot D(|Du|) \, \mbox{d}x=\int_{\mathbb{R}^N}x\cdot(z\cdot D^2u) \, \mbox{d}x=\int_{\mathbb{R}^N}(z,(x,D^2u)) \, \mbox{d}x.
\]
On the other hand,
\[
\int_{\mathbb{R}^N}x\cdot D(|Du|) \, \mbox{d}x=\int_{\mathbb{R}^N}(-\mbox{div}x)|Du|=-N\int_{\mathbb{R}^N}|Du|.
\]
Therefore,
\[
\int_{\mathbb{R}^N}(z,(x\cdot D^2u)) \, \mbox{d}x=-N\int_{\mathbb{R}^N}|Du|.
\]
Then from (\ref{thm:Poho-1}), we get
\begin{eqnarray}\label{thm:Poho-2}
\int_{\mathbb{R}^N}-(x\cdot Du) \, \mbox{div}z \, \mbox{d}x=-(N-1)\int_{\mathbb{R}^N}|Du|.
\end{eqnarray}

Now we choose $x\cdot Du\in W^{1,1}(\mathbb{R}^N)$ as a test function in (\ref{eq:z}) and integrate over $\mathbb{R}^N$. 
Note that $u\in L^1(\mathbb{R}^N)\cap L^{1^*}(\mathbb{R}^N)$ and $Du\in L^1(\mathbb{R}^N)$, we can apply the Green formula to obtain that
\begin{eqnarray*}
&&\int_{\mathbb{R}^N}-(x\cdot Du) \, \mbox{div}z \, \mbox{d}x+\int_{\mathbb{R}^N}\frac{u}{|u|}(x\cdot Du) \, \mbox{d}x\\
&=&\int_{\mathbb{R}^N}f(u)(x\cdot Du) \, \mbox{d}x+\int_{\mathbb{R}^N}|u|^{1^*-2}u(x\cdot Du) \, \mbox{d}x\\
&=&\int_{\mathbb{R}^N}\sum_{i=1}^{N}x_i\frac{\partial F(u)}{\partial x_i} \, \mbox{d}x+\frac{1}{1^*}\int_{\mathbb{R}^N}\sum_{i=1}^Nx_i\frac{\partial(|u|^{1^*})}{\partial x_i} \, \mbox{d}x\\
&=&-N\int_{\mathbb{R}^N}F(u) \, \mbox{d}x-\frac{N}{1^*}\int_{\mathbb{R}^N}|u|^{1^*} \, \mbox{d}x.
\end{eqnarray*}
Inserting (\ref{thm:Poho-2}) into the above equality and note that $\frac{u}{|u|}Du=D(|u|)$, we get
\begin{eqnarray}\label{thm:Poho-3}
(N-1)\int_{\mathbb{R}^N}|Du|+N\int_{\mathbb{R}^N}|u| \, \mbox{d}x
=N\int_{\mathbb{R}^N}F(u) \, \mbox{d}x+(N-1)\int_{\mathbb{R}^N}|u|^{1^*} \, \mbox{d}x.
\end{eqnarray}
Furthermore, using $u$ as a test function in (\ref{eq:1-Lap}), we also get
\begin{eqnarray}\label{thm:Poho-4}
\int_{\mathbb{R}^N}|Du|+\int_{\mathbb{R}^N}|u| \, \mbox{d}x=
\int_{\mathbb{R}^N}f(u)u \, \mbox{d}x+\int_{\mathbb{R}^N}|u|^{1^*} \, \mbox{d}x.
\end{eqnarray}
From (\ref{thm:Poho-3})-(\ref{thm:Poho-4}) we get
\begin{eqnarray*}
\int_{\mathbb{R}^N}|Du|-N\int_{\mathbb{R}^N}(f(u)u-F(u)) \, \mbox{d}x-\int_{\mathbb{R}^N}|u|^{1^*} \, \mbox{d}x=0.
\end{eqnarray*}
This completes the proof of Theorem \ref{thm:Poho}.
\end{proof}

\begin{rem}\label{rem:2-1}
(i) For the 1-Laplace equation 
\begin{eqnarray}\label{eq:1-Lap-f}
    -\mbox{div}\left(\frac{Du}{|Du|}\right)+\frac{u}{|u|}=f(u), & \hbox{in $\mathbb{R}^N$},
\end{eqnarray}
the Pohozaev identity becomes
\begin{eqnarray}\label{thm:Poho-5}
\int_{\mathbb{R}^N}|Du|-N\int_{\mathbb{R}^N}(f(u)u-F(u)) \, \mbox{d}x=0.
\end{eqnarray}

(ii) From (\ref{thm:Poho-3})-(\ref{thm:Poho-4}), we also have
\begin{eqnarray}\label{thm:Poho-6}
(N-1)\int_{\mathbb{R}^N}f(u)u \, \mbox{d}x-N\int_{\mathbb{R}^N}F(u) \, \mbox{d}x+\int_{\mathbb{R}^N}|u| \, \mbox{d}x=0.
\end{eqnarray}
This equality also holds for problem (\ref{eq:1-Lap-f}).
\end{rem}

Recall that in ($f_0$), we assume that $q\in(1,1^*)$.
For problem \eqref{eq:1-Lap-f}, this requirement is consistent with (\ref{thm:Poho-5})-(\ref{thm:Poho-6}) in order to ensure the solution $u$ is nontrivial. 

Although the Pohozaev identity is established only for solution of (\ref{eq:1-Lap}) in $W^{1,1}(\mathbb{R}^N)$, the functional $P$ given by (\ref{def:P(u)}) is well defined on $BV(\mathbb{R}^N)$.

\section{Construction of a minimizing Palais-Smale sequence}\label{sec:exist}
\indent\indent
In this section, we prove the existence of a sequence (called a Palais-Smale sequence) $\{v_n\}\subset S_m$ for $I|_{S_m}$ at a mountain-pass-type critical level $\gamma_m$, and then study the compactness of this sequence in $L^r(\mathbb{R}^N)$, for all $1\leq r\leq 1^*$. 

To begin with, we define
\begin{eqnarray*}\label{def:D-11}
D^{1,1}(\mathbb{R}^N)=\{u\in L^{1^*}(\mathbb{R}^N): Du\in L^1(\mathbb{R}^N)\}.
\end{eqnarray*}
We also denote by $S$ the best constant for the embedding $D^{1,1}(\mathbb{R}^N)\hookrightarrow L^{1^*}(\mathbb{R}^N)$, i.e.
\begin{eqnarray}\label{def:S1}
S=\inf_{u\in D^{1,1}(\mathbb{R}^N)\setminus\{0\}}\frac{\|Du\|_{1}}{\|u\|_{1^*}}.
\end{eqnarray}

According to \cite[Chapter 3]{AFP00}, 
the embedding $BV(\mathbb{R}^N)\hookrightarrow L^r(\mathbb{R}^N)$ is continuous for $1\leq r\leq 1^*$ and is locally compact for $1\leq r<1^*$.
Thus the Gagliardo-Nirenberg-type inequality for functions $u\in BV(\mathbb{R}^N)$ holds, that is, there exists a constant $C_r>0$, depending on $r$, such that
\begin{eqnarray}\label{ineq:G-N}
\|u\|_r\leq C_r\|u\|_1^{1-\theta_r}\|Du\|_1^{\theta_r},
\end{eqnarray}
where $\theta_r=N\left(1-\frac{1}{r}\right)$.

We begin by proving that $I|_{S_m}$ has the mountain pass geometry on $S_m$ as in \cite{Jean97}.
We recall the following scaling map given in \cite{Jean97}.

For $w\in BV(\mathbb{R}^N)$ and $\tau\in\mathbb{R}$, we define $\tau* w$ by
\begin{eqnarray*}
(\tau*w)(x)=e^{N\tau}w(e^{\tau}x).
\end{eqnarray*}
By direct computation, we have
\begin{eqnarray*}
\int_{\mathbb{R}^N}|D(\tau*w)|=e^\tau\int_{\mathbb{R}^N}|Dw|,\quad\mbox{and}\quad
\int_{\mathbb{R}^N}|\tau*w|^r \, \mbox{d}x=e^{(r-1)N\tau}\int_{\mathbb{R}^N}|w|^r \, \mbox{d}x.
\end{eqnarray*}
Moreover,
\begin{eqnarray*}
w\in S_m\Leftrightarrow \tau*w\in S_m,
\end{eqnarray*}
where $S_m$ is defined in (\ref{def:manifold-Sm}).
As a result, we have
\begin{eqnarray}\label{eq:I(tau-u)}
I(\tau*u)&=&\int_{\mathbb{R}^N}|D(\tau*u)|-\int_{\mathbb{R}^N}F(\tau*u) \, \mbox{d}x-\frac{1}{1^*}\int_{\mathbb{R}^N}|\tau*u|^{1^*} \, \mbox{d}x\nonumber\\
&=&e^{\tau}\int_{\mathbb{R}^N}|Du|-e^{-N\tau}\int_{\mathbb{R}^N}F(e^{N\tau}u) \, \mbox{d}x-\frac{e^{1^*\tau}}{1^*}\int_{\mathbb{R}^N}|u|^{1^*} \, \mbox{d}x.
\end{eqnarray}

\begin{lem}\label{lem:2-5}
  Suppose that ($f_1$)-($f_4$) hold. Then $\forall u\in BV(\mathbb{R}^N)\setminus\{0\}$, we have
  
  (1) $I(\tau*u)\to0^+$ as $\tau\to-\infty$;
  
  (2) $I(\tau*u)\to-\infty$ as $\tau\to+\infty$.
\end{lem}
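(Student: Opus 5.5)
The plan is to work directly with the scaling formula \eqref{eq:I(tau-u)} and compare the three terms asymptotically in $\tau$. The key change of variables is $y=e^\tau x$. With it, $\int F(\tau*u)\,\mbox{d}x=e^{-N\tau}\int F(e^{N\tau}u)\,\mbox{d}y$ and $\|\tau*u\|_r^r=e^{(r-1)N\tau}\|u\|_r^r$. We also use that $\|\tau*u\|_1=\|u\|_1$ is fixed and $|D(\tau*u)|(\mathbb{R}^N)=e^\tau|Du|(\mathbb{R}^N)$. Note that $|Du|(\mathbb{R}^N)>0$ for $u\neq0$ in $BV(\mathbb{R}^N)$, since a BV function on $\mathbb{R}^N$ with zero total variation is constant, hence $0$.

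For (1), let $\tau\to-\infty$.

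\emph{Upper bound on the nonlinear terms.} By \eqref{lem:2-3-1} we have $|F(t)|\le \varepsilon|t|^{1+\frac1N}+C_\varepsilon|t|^{1^*}$. Combined with the Gagliardo--Nirenberg inequality \eqref{ineq:G-N} (with $r=1+\frac1N$, so $\theta_r r=1$, and with $r=1^*$, so $\theta_r r=1^*$), this gives
\[
\int F(\tau*u)\,\mbox{d}x+\tfrac1{1^*}\|\tau*u\|_{1^*}^{1^*}\le C\varepsilon m^{1/N}e^{\tau}|Du|(\mathbb{R}^N)+C'(e^\tau|Du|(\mathbb{R}^N))^{1^*}.
\]
Choosing $\varepsilon$ small, we get $I(\tau*u)\ge \frac12 e^\tau|Du|(\mathbb{R}^N)-C(e^\tau|Du|(\mathbb{R}^N))^{1^*}$. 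Since $1^*>1$, this is positive for $\tau$ very negative.

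\emph{Upper bound on $I(\tau*u)$.} Since $F\ge0$ by ($f_3$), $I(\tau*u)\le e^\tau|Du|(\mathbb{R}^N)\to0$. Together these give $I(\tau*u)\to0^+$.

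For (2), let $\tau\to+\infty$. Here $F\ge0$ gives $I(\tau*u)\le e^\tau|Du|(\mathbb{R}^N)-\frac{e^{1^*\tau}}{1^*}\|u\|_{1^*}^{1^*}$. Because $u\neq0$ we have $\|u\|_{1^*}>0$, and $1^*>1$, so the right-hand side tends to $-\infty$. The critical term alone forces this, so ($f_3$)'s growth from below on $F$ is not even needed.

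The only delicate point is (1): the lower bound must be uniform, with the subcritical part of $F$ controlled at the same order $e^\tau$ as the gradient term but with a small coefficient. This is exactly why the exponent $1+\frac1N$ (the mass-critical exponent) appears in ($f_1$) via \eqref{lem:2-3-1}. Hypotheses ($f_2$) and ($f_4$) are not needed beyond deriving \eqref{lem:2-3-1}.
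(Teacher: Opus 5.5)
Your proof is correct. For part (2), and for the upper bound $I(\tau*u)\le e^\tau|Du|(\mathbb{R}^N)$ in part (1), it is exactly the paper's argument: use $F\ge 0$ and compare the growth of $e^{\tau}$ with that of $e^{1^*\tau}$.

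Where you depart from the paper is the positivity in (1), and there your version is the more complete one. The paper writes only the upper bound
\begin{equation*}
I(\tau*u)\le e^{\tau}\bigl(\textstyle\int_{\mathbb{R}^N}|Du|-\frac{1}{1^*}e^{(1^*-1)\tau}\int_{\mathbb{R}^N}|u|^{1^*}\,\mbox{d}x\bigr).
\end{equation*}
It then asserts $I(\tau*u)>0$. That does not follow from an upper bound, and neither does $\liminf_{\tau\to-\infty}I(\tau*u)\ge 0$, which is needed even for the limit to be $0$.

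Your lower bound supplies exactly this missing step:
\begin{equation*}
I(\tau*u)\ge\tfrac12 e^\tau|Du|(\mathbb{R}^N)-C\bigl(e^\tau|Du|(\mathbb{R}^N)\bigr)^{1^*}.
\end{equation*}
You get it from \eqref{lem:2-3-1} and the Gagliardo--Nirenberg inequality, using that $\|\tau*u\|_1$ stays fixed while the total variation scales like $e^\tau$. It is the same estimate as \eqref{lem:2-3-(1)} in the proof of Lemma \ref{lem:4-1}, which the paper later applies to $\tau_0*u$ in Lemma \ref{lem:2-7}. The price is using ($f_1$)--($f_2$) rather than only the sign of $F$, which the lemma's hypotheses allow anyway. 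Your remark that $|Du|(\mathbb{R}^N)>0$ for $u\neq0$ is correctly justified and is genuinely needed for strict positivity.
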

\begin{proof}
  By (\ref{ineq:f-F-1}), we have $F(t)\geq0$, $\forall t\in\mathbb{R}$. Thus by (\ref{eq:I(tau-u)}),
  \begin{eqnarray*}
    I(\tau*u) &\leq& e^{\tau}\left(\int_{\mathbb{R}^N}|Du|-\frac{1}{1^*}e^{(1^*-1)\tau}\int_{\mathbb{R}^N}|u|^{1^*} \, \mbox{d}x\right).
  \end{eqnarray*}
  Since $e^\tau\to0$ as $\tau\to-\infty$, we have $I(\tau*u)>0$ and $I(\tau*u)\to0^+$ as $\tau\to-\infty$. Thus conclusion (1) holds.
  
  Similarly, since $e^\tau\to+\infty$ as $\tau\to+\infty$, we have $I(\tau*u)\to-\infty$ as $\tau\to+\infty$, so conclusion (2) holds.
\end{proof}

\begin{lem}\label{lem:4-1}
Suppose ($f_1$)-($f_2$) hold. Then there exists a constant $K_m>0$ such that 
\[
0<\sup_{u\in A}I(u)<\inf_{u\in B}I(u),
\]
where 
$
  A=\{u\in S_m: \int_{\mathbb{R}^N}|Du|\leq K_m\},
  B=\{u\in S_m: \int_{\mathbb{R}^N}|Du|=2K_m\}.
$
\end{lem}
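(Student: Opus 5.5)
We follow the mountain-pass geometry argument of \cite{Jean97}, using the growth bound (\ref{lem:2-3-1}) together with the Gagliardo--Nirenberg inequality (\ref{ineq:G-N}) on $S_m$. Fix $u\in S_m$ and write $t=\int_{\mathbb{R}^N}|Du|$.

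\textbf{Step 1: a two-sided bound for $I$ in terms of $t$.}
By (\ref{lem:2-3-1}), after integrating, $|F(s)|\le \varepsilon|s|^{1+\frac1N}+C_\varepsilon|s|^{1^*}$. Apply (\ref{ineq:G-N}) with $\|u\|_1=m$:
\begin{itemize}
\item for $r=1+\frac1N$ we have $\theta_r=N(1-\frac{N}{N+1})=\frac{N}{N+1}$, so $\|u\|_{r}^{r}\le C m^{\frac{1}{N}}\,t$;
\item for $r=1^*$ we have $\theta_r=1$, so $\|u\|_{1^*}^{1^*}\le C t^{1^*}$.
\end{itemize}
Since $F\ge0$ by (\ref{ineq:f-F-1}), this yields
\[
t-\varepsilon C m^{\frac1N}t-C(C_\varepsilon+1)t^{1^*}\le I(u)\le t .
\]
Choose $\varepsilon$ with $\varepsilon Cm^{1/N}\le\frac12$. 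Then
\[
\tfrac12 t-C' t^{1^*}\le I(u)\le t
\qquad\text{for all } u\in S_m .
\]

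\textbf{Step 2: choosing $K_m$.}
On $A$ we have $\sup_A I\le K_m$. On $B$ we have
\[
\inf_B I\ge K_m-C'(2K_m)^{1^*}.
\]
Because $1^*>1$, the choice of a small $K_m>0$ gives $C'(2K_m)^{1^*}<\frac12K_m$, hence $\inf_B I>\frac12 K_m$.

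This alone does not suffice, since we need $\sup_A I<\inf_B I$. We therefore sharpen the upper bound on $A$ to $I(u)\le t\le K_m$. The required strict comparison $K_m<\inf_B I$ then needs $\inf_B I$ close to $2K_m$. This is where we use ($f_1$) more carefully: $F(s)=o(|s|^{1+1/N})$ allows arbitrarily small $\varepsilon$. Taking $\varepsilon Cm^{1/N}\le\frac14$ gives
\[
\inf_B I\ge \tfrac34\cdot 2K_m-C'(2K_m)^{1^*}=\tfrac32K_m-C'(2K_m)^{1^*}.
\]
This exceeds $K_m$ once $K_m$ is small enough.

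\textbf{Step 3: positivity.}
We need $\sup_A I>0$. The set $A$ is nonempty and contains elements with small $t>0$: take any $w\in S_m$ and rescale to $\tau*w$ with $\tau\to-\infty$, which stays in $S_m$ and has $\int|D(\tau*w)|=e^\tau\int|Dw|\to0$. By Lemma \ref{lem:2-5}(1), $I(\tau*w)>0$ for such $\tau$, so $\sup_A I>0$.

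\textbf{Main obstacle.}
The delicate point is the constant bookkeeping in Step 1. The $L^{1+1/N}$ term scales exactly linearly in $t$ on $S_m$, because $1+\frac1N$ is the mass-critical exponent, so it competes with the leading term $t$ itself. It can only be absorbed because ($f_1$) gives a small coefficient $\varepsilon$. This is also why $K_m$ depends on $m$.
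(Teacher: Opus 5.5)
Your proof is correct and takes essentially the same route as the paper. Both arguments combine the growth bound (\ref{lem:2-3-1}) with the Gagliardo--Nirenberg inequality at the mass-critical exponent $1+\frac1N$ (which produces $m^{1/N}\int_{\mathbb{R}^N}|Du|$) and the Sobolev inequality at $1^*$; they then fix $\varepsilon$ small in terms of $m$ and take $K_m$ small, so that the lower bound on $B$ beats the upper bound $I\le\int_{\mathbb{R}^N}|Du|$ on $A$.

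The only difference is how you get $\sup_A I>0$. You invoke Lemma \ref{lem:2-5}, which relies on ($f_3$)--($f_4$). The paper instead uses its lower bound (\ref{lem:2-3-(1)}), and your own Step 1 estimate $I(u)\ge\frac34 t-C't^{1^*}>0$ for $0<t\le K_m$ gives this directly, so you do not need the extra hypotheses.
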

\begin{proof}
For $m>0$, we define $B_m=\{u\in BV(\mathbb{R}^N):\|u\|_1\leq m\}$.
Then $\forall u\in B_m$, by (\ref{lem:2-3-1}), G-N inequality (\ref{ineq:G-N}) and the embedding theorem (\ref{def:S1}), we have
\begin{eqnarray*}
  |\mathcal{F}(u)|
  &:=&\left|\int_{\mathbb{R}^N}F(u) \, \mbox{d}x+\frac{1}{1^*}\int_{\mathbb{R}^N}|u|^{1^*} \, \mbox{d}x\right|\\
  &\leq&\varepsilon\int_{\mathbb{R}^N}|u|^{1+\frac{1}{N}} \, \mbox{d}x+C_\varepsilon\int_{\mathbb{R}^N}|u|^{1^*} \, \mbox{d}x
  +\frac{1}{1^*}\int_{\mathbb{R}^N}|u|^{1^*} \, \mbox{d}x\\
  &\leq&C\varepsilon\|u\|_1^{\frac{1}{N}}\|Du\|_1+\left(C_\varepsilon+\frac{1}{1^*}\right)S^{-1^*}\|Du\|_1^{1^*}\\
  &\leq&C\varepsilon m^{\frac{1}{N}}\|Du\|_1+\left(C_\varepsilon+\frac{1}{1^*}\right)S^{-1^*}\|Du\|_1^{1^*}\\
  &=&\left[C\varepsilon m^{\frac{1}{N}}+\left(C_\varepsilon+\frac{1}{1^*}\right)S^{-1^*}\|Du\|_1^{1^*-1}\right]\|Du\|_1.
\end{eqnarray*}
Taking $\varepsilon=\frac{1}{8Cm^{\frac{1}{N}}}$ and $\delta=\left(\frac{S^{1^*}}{8(C_\varepsilon+\frac{1}{1^*})}\right)^{\frac{1}{1^*-1}}$. Then for $\|Du\|_1\leq\delta$, we have
$|\mathcal{F}(u)|\leq\frac{1}{4}\|Du\|_1$. This also shows that 
\begin{eqnarray}\label{lem:2-3-(1)}
I(u)\geq\frac{1}{4}\int_{\mathbb{R}^N}|Du|
\end{eqnarray}
for each $u\in B_m$ with $\|Du\|_1\leq\delta$.

Let $K>0$ be arbitrary but fixed, and let $u,v\in S_m$ be such that $J_1(u)=K$ and $J_1(v)=2K$, where $J_1(u)=\int_{\mathbb{R}^N}|Du|$. Note that $|\mathcal{F}(v)|\leq\frac{1}{4}J_1(v)$ for $\|v\|_{BV}=2K$ with $2K\leq\delta$. Thus 
\begin{eqnarray*}
I(v)-I(u)&=&J_1(v)-J_1(u)-\mathcal{F}(v)+\mathcal{F}(u)\\
&\geq&J_1(v)-J_1(u)-\frac{1}{4}J_1(v)\\
&\geq&\frac{K}{2}.
\end{eqnarray*}
This implies the conclusion.
\end{proof}

From Lemma \ref{lem:2-5} and Lemma \ref{lem:4-1}, we can easily obtain the following result.

\begin{lem}\label{lem:4-2}
Suppose ($f_1$)-($f_4$) hold. Then there exists $u_1, u_2\in S_m$ such that 
\begin{itemize}
  \item [(1)] $\int_{\mathbb{R}^N}|Du_1|\leq K_m$; 
  \item [(2)] $\int_{\mathbb{R}^N}|Du_2|>2K_m$;
  \item [(3)] $I(u_1)>0\geq I(u_2)$,
\end{itemize}
where $K_m$ is given by Lemma \ref{lem:4-1}. 
\end{lem}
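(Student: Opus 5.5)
The idea is to take any fixed $u\in S_m$ and move it along the mass-preserving scaling $\tau\mapsto\tau*u$, using Lemma \ref{lem:2-5} for the behaviour at both ends. We then read off $u_1$ and $u_2$ from that orbit.

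\textbf{Step 1.} Fix any $w\in S_m$; for instance, a suitably normalized smooth compactly supported nonnegative function, so that $\int_{\mathbb{R}^N}|Dw|>0$. Since $w\in S_m$ implies $\tau*w\in S_m$ for all $\tau$, the whole curve $\{\tau*w\}$ stays on the constraint.

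\textbf{Step 2 (choosing $u_1$).} We have $\int_{\mathbb{R}^N}|D(\tau*w)|=e^\tau\int_{\mathbb{R}^N}|Dw|$, which tends to $0$ as $\tau\to-\infty$. Choose $\tau_1$ very negative so that this is $\le K_m$, and also $\le\delta$, where $\delta$ is the smallness threshold from the proof of Lemma \ref{lem:4-1}. Set $u_1=\tau_1*w$. Then (1) holds. For positivity, we can use either of two routes. The first is \eqref{lem:2-3-(1)}, which gives $I(u_1)\ge\frac14\int_{\mathbb{R}^N}|Du_1|>0$, since $u_1\neq0$ has nonzero total variation. The second is Lemma \ref{lem:2-5}(1), which gives $I(\tau*w)\to0^+$, hence $I(u_1)>0$ for $\tau_1$ negative enough.

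\textbf{Step 3 (choosing $u_2$).} By Lemma \ref{lem:2-5}(2), $I(\tau*w)\to-\infty$ as $\tau\to+\infty$. Simultaneously, $\int_{\mathbb{R}^N}|D(\tau*w)|=e^\tau\int_{\mathbb{R}^N}|Dw|\to+\infty$. Take $\tau_2$ large enough that both $I(\tau_2*w)\le0$ and $e^{\tau_2}\int_{\mathbb{R}^N}|Dw|>2K_m$, and put $u_2=\tau_2*w$. This gives (2) and $I(u_2)\le0<I(u_1)$, which is (3).

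\textbf{Main obstacle.} There is no real analytic obstacle here. The only point needing care is that the decay to $0^+$ in Lemma \ref{lem:2-5}(1) must really give strict positivity at a point where $\int_{\mathbb{R}^N}|Du_1|\le K_m$. I would secure this through \eqref{lem:2-3-(1)}, taking $K_m\le\delta$ as is implicit in Lemma \ref{lem:4-1}, rather than relying solely on the crude upper bound used in Lemma \ref{lem:2-5}. That bound alone does not yield $I>0$; the positivity comes from the estimate $|\mathcal{F}(u)|\le\frac14\|Du\|_1$ for $u\in B_m$ with small total variation.
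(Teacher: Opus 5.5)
Your argument is correct and is essentially the paper's own: the paper simply states that the lemma follows from Lemma~\ref{lem:2-5} and Lemma~\ref{lem:4-1}, and your choice of $u_1=\tau_1*w$ and $u_2=\tau_2*w$ on a single mass-preserving orbit spells out exactly that combination. You are also right to obtain $I(u_1)>0$ from the small-variation estimate \eqref{lem:2-3-(1)}, choosing $\tau_1$ so that $e^{\tau_1}\int_{\mathbb{R}^N}|Dw|\le\min\{K_m,\delta\}$, since the upper bound displayed in the proof of Lemma~\ref{lem:2-5}(1) does not by itself give positivity.
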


We remark that employing the approach in \cite[Lemma 2.8]{Jean97}, we can prove that $u_1$ and $u_2$ are arc-connected.

We define
\begin{eqnarray*}\label{lem:4-2-4}
  \gamma_m &=& \inf_{h\in\Gamma_m}\max_{t\in[0,1]}I(h(t)),
\end{eqnarray*}
where
\begin{eqnarray*}\label{lem:4-2-5}
  \Gamma_m &=& \{h\in C([0,1],S_m): h(0)=u_1, h(1)=u_2\}.
\end{eqnarray*}
Then $\gamma_m$ is a mountain-pass-type critical value of $I$ on $S_m$; moreover, by Lemma \ref{lem:4-1}, we have
\begin{eqnarray}\label{lem:4-2-6}
  \gamma_m &>& \max\{I(u_1), I(u_2)\}.
\end{eqnarray}

Let ${E}=BV(\mathbb{R}^N)\times\mathbb{R}$ be the product space endowed with the norm
\[
\|(u,s)\|_E=\|u\|_{BV}+|s|_{\mathbb{R}}.
\]
Then $E$ is a Banach space. We define the functional $\widetilde{I}: E\to\mathbb{R}$ by
\begin{eqnarray}\label{def:eq-I~}
  \widetilde{I}((u,\tau)) = I(\tau*u)
=e^\tau\int_{\mathbb{R}^N}|Du|-e^{-N\tau}\int_{\mathbb{R}^N}F(e^{N\tau}u) \, \mbox{d}x
-\frac{e^{1^*\tau}}{1^*}\int_{\mathbb{R}^N}|u|^{1^*} \, \mbox{d}x,\quad \forall (u,\tau)\in E.\nonumber\\
\end{eqnarray}

Set
\begin{eqnarray*}\label{lem:4-2-1}
  \widetilde{\gamma}_m = \inf_{h\in\widetilde{\Gamma}_m}\max_{t\in[0,1]}\widetilde{I}(h(t)),
\end{eqnarray*}
where
\begin{eqnarray*}\label{lem:4-2-2}
\widetilde{\Gamma}_m=\{h\in C([0,1], S_m\times\mathbb{R}): h(0)=(u_1,0),h(1)=(u_2,0)\}.
\end{eqnarray*}
Note that
\begin{eqnarray*}
  I(u_1) &=& I(0*u_1)=\widetilde{I}((u_1,0))\\
I(u_2) &=& I(0*u_2)=\widetilde{I}((u_2,0)),
\end{eqnarray*}
we have $\Gamma_m\subset\widetilde{\Gamma}_m$, and so $\widetilde{\gamma}_m\leq\gamma_m$. 

We claim that $\widetilde{\gamma}_m\geq\gamma_m$, so that $\widetilde{\gamma}_m={\gamma}_m$.
In fact, for any $\widetilde{h}\in\widetilde{\Gamma}_m$, let $\widetilde{h}(t)=(\widetilde{h}_1(t),\widetilde{h}_2(t))\in S_m\times\mathbb{R}$.
Then for all $t\in[0,1]$, we have
\[
\widetilde{I}(\widetilde{h}(t))=\widetilde{I}((\widetilde{h}_1(t),\widetilde{h}_2(t)))=I(\widetilde{h}_2(t)*\widetilde{h}_1(t)).
\]
Since $\widetilde{h}(t)\in\widetilde{\Gamma}_m$, we have $\widetilde{h}_2(0)=0, \widetilde{h}_2(1)=0$. 
Thus by setting $h(t)= \widetilde{h}_2(t)*\widetilde{h}_1(t)$, we have $h(0)=\widetilde{h}_1(0)=u_1$ and $h(1)=\widetilde{h}_1(1)=u_2$. This means that $h\in\Gamma_m$ and 
\[
\max_{t\in[0,1]}\widetilde{I}(\widetilde{h}(t))=\max_{t\in[0,1]}I(h(t)).
\]
This proves the claim.  By \eqref{lem:4-2-6}, we have 
\begin{eqnarray}\label{lem:4-2-3}
\widetilde{\gamma}_m>\max\{\widetilde{I}((u_1,0)),\widetilde{I}((u_2,0))\}:=\widetilde{\gamma}_0>0.
\end{eqnarray}

Now we construct a Palais-Smale sequence for the constrained functional $I|_{S_m}$ at level $\gamma_m$.
We cannot adopt the method from \cite{JeLu20} to prove the next lemma because the manifold $S_m$ is not $C^1$ but merely Lipschitz. Inspired by \cite{Jean97}, we construct this sequence indirectly through the functional $\widetilde{I}$. We will make use of the kernel of $\mathcal{F}'(u)$ instead of the tangent space $T_u$ for $u\in S_m$. Our proof is based on the framework of nonsmooth analysis.

For $(u,\tau)\in E$, we define 
\[
\widetilde{J}_1((u,\tau))=J_1(\tau*u)\quad\mbox{and}\quad \widetilde{J}_2((u,\tau))=J_2(\tau*u),
\]
where $J_1, J_2$ are defined in (\ref{def:J-1&J-2}). 
Similarly, for $(u,\tau)\in E$, we define $\widetilde{\mathcal{F}}((u,\tau))={\mathcal{F}}(\tau*u)$, where ${\mathcal{F}}$ is defined by (\ref{def:F}). Then $\forall (v,s)\in E$ and $\forall \kappa>0$, we have
\begin{eqnarray}\label{def:tideF'}
  \langle\widetilde{\mathcal{F}}'((u,\tau)),\kappa(v,s)\rangle
  &=& \kappa e^{-N\tau}\int_{\mathbb{R}^N}f(e^{N\tau}u)e^{N\tau}v \, \mbox{d}x+\kappa e^{1^*\tau}\int_{\mathbb{R}^N}|u|^{1^*-2}uv \, \mbox{d}x \nonumber\\
  &&+\kappa sNe^{-N\tau}\int_{\mathbb{R}^N}\Big(f(e^{N\tau}u)e^{N\tau}u-F(e^{N\tau}u)\Big) \, \mbox{d}x+\kappa se^{1^*\tau}\int_{\mathbb{R}^N}|u|^{1^*} \, \mbox{d}x.\nonumber\\
\end{eqnarray}
For $(u,\tau)\in E$, we define two continuous mappings from $E$ to $\mathbb{R}$ by
\begin{eqnarray*}\label{def:F'-u}
  \langle \widetilde{\mathcal{F}}_u'((u,\tau)),v\rangle &=& e^{-N\tau}\int_{\mathbb{R}^N}f(e^{N\tau}u)e^{N\tau}v \, \mbox{d}x
  +e^{1^*\tau}\int_{\mathbb{R}^N}|u|^{1^*-2}uv \, \mbox{d}x,
\end{eqnarray*}
\begin{eqnarray*}\label{def:F'-t}
  \langle \widetilde{\mathcal{F}}_{\tau}'((u,\tau)),s\rangle &=& sNe^{-N\tau}\int_{\mathbb{R}^N}\Big(f(e^{N\tau}u)e^{N\tau}u-F(e^{N\tau}u)\Big) \, \mbox{d}x
  +se^{1^*\tau}\int_{\mathbb{R}^N}|u|^{1^*} \, \mbox{d}x
\end{eqnarray*}
for any $(v,s)\in E$. It is easy to see that
\begin{eqnarray*}
  \langle \widetilde{\mathcal{F}}'((u,\tau)),(v,s)\rangle &=& 
  \langle \widetilde{\mathcal{F}}_u'((u,\tau)),v\rangle + \langle \widetilde{\mathcal{F}}_{\tau}'(u,\tau),s\rangle.
\end{eqnarray*}
We define the set
\[
\widetilde{T}_{(u,\tau)}=\left\{z=(z_1,z_2)\in E: \langle\widetilde{\mathcal{F}}_u'((u,\tau)),z_1\rangle=0\right\}.
\]

The following lemma establishes the existence of approximate critical points and the related variational inequality.
Since $S_m$ is not $C^1$, the approach in \cite{Jean97} is not applicable here.
Recall that $\widetilde{J}_1((u,\tau))=e^{\tau}\int_{\mathbb{R}^N}|Du|$.

\begin{lem}\label{lem:4-3}
Suppose ($f_1$)-($f_4$) hold.
Let $0<\varepsilon<\widetilde{\gamma}_m-\widetilde{\gamma}_0$ and $g=(g_1,g_2)\in\widetilde{\Gamma}_m$ be such that
\[
\max_{t\in[0,1]}\widetilde{I}(g(t))\leq\widetilde{\gamma}_m+\varepsilon.
\]
Then there exists $(u,\tau)\in S_m\times\mathbb{R}$ such that
\begin{itemize}
  \item [(1)] $\widetilde{I}((u,\tau))\in[\widetilde{\gamma}_m-\varepsilon,\widetilde{\gamma}_m+\varepsilon]$;
  \item [(2)] $\min_{t\in[0,1]}\|(u,\tau)-g(t)\|_E\leq\sqrt{\varepsilon}$;
  \item [(3)] {$\forall z=(z_1,z_2)\in \widetilde{T}_{(u,\tau)}$} with $\|z\|_E=1$ and $\forall\kappa>0$ small enough, $$\widetilde{J}_1((u+\kappa z_1,\tau))-\widetilde{J}_1((u,\tau))+\kappa z_2\widetilde{J}_1((u,\tau))
-\langle \widetilde{\mathcal{F}}'((u,\tau)),\kappa z\rangle\geq-4\kappa\sqrt{\varepsilon}.$$
\end{itemize}
\end{lem}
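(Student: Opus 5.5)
This is an Ekeland-type argument in the spirit of \cite[Lemma 2.3]{Jean97} and the nonsmooth deformation framework of \cite{Szul86}, applied to the path space rather than to $E$ itself. Equip $\widetilde{\Gamma}_m$ with the metric
\[
d(h,k)=\max_{t\in[0,1]}\|h(t)-k(t)\|_E .
\]
It is a complete metric space, because $S_m\times\mathbb{R}$ is closed in $E$. The functional $\Phi(h)=\max_{t}\widetilde{I}(h(t))$ is lower semicontinuous on it and bounded below by $\widetilde{\gamma}_m$.

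\textbf{Step 1: Ekeland.} Apply Ekeland's variational principle to $\Phi$ at $g$, with parameter $\sqrt{\varepsilon}$. This gives $h\in\widetilde{\Gamma}_m$ such that:
\begin{itemize}
\item $\Phi(h)\leq\Phi(g)\leq\widetilde{\gamma}_m+\varepsilon$;
\item $d(h,g)\leq\sqrt{\varepsilon}$;
\item $\Phi(k)>\Phi(h)-\sqrt{\varepsilon}\,d(k,h)$ for every $k\neq h$.
\end{itemize}
Let $M=\{t:\widetilde{I}(h(t))\geq\widetilde{\gamma}_m-\varepsilon\}$. Since $\varepsilon<\widetilde{\gamma}_m-\widetilde{\gamma}_0$, the endpoints $0,1$ do not lie in $M$, so interior perturbations of $h$ supported near $M$ keep the endpoints fixed. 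Any $(u,\tau)=h(t_0)$ with $t_0\in M$ automatically satisfies (1). Property (2) follows from $d(h,g)\leq\sqrt{\varepsilon}$: indeed $\|h(t_0)-g(t_0)\|_E\leq\sqrt{\varepsilon}$.

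\textbf{Step 2: contradiction setup.} Suppose (3) fails at every point $h(t)$ with $t\in M$. Then for each such $t$ there is $z^t\in\widetilde{T}_{h(t)}$ with $\|z^t\|_E=1$ for which, along a sequence $\kappa\to0$, the left side of (3) is $<-4\kappa\sqrt{\varepsilon}$. The quantity on the left of (3) is exactly a one-sided difference quotient of $\widetilde{I}$ along $\kappa z$. To see this, use $e^{\tau}$-homogeneity in the $\tau$-direction, $\widetilde{J}_1((u,\tau+\kappa z_2))=e^{\kappa z_2}\widetilde{J}_1((u,\tau))$, together with the $C^1$ expansion of $\widetilde{\mathcal F}$. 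It follows that
\[
\widetilde{I}(h(t)+\kappa z^t)\leq\widetilde{I}(h(t))-3\kappa\sqrt{\varepsilon}
\]
for small $\kappa$.

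\textbf{Step 3: stay on the constraint.} The direction $h(t)+\kappa z^t$ generally leaves $S_m$, so it must be projected back. Take the projection to be $w\mapsto mw/\|w\|_1$ in the first component; this is the Lipschitz substitute for a tangent space. Because $S_m$ is only Lipschitz, the first-order change of $\|u+\kappa z_1\|_1$ cannot be controlled linearly. The point of the condition $\langle\widetilde{\mathcal F}'_u,z_1\rangle=0$ is this: after rescaling by $\mu=m/\|u+\kappa z_1\|_1=1+O(\kappa)$, both $J_1$ (which is 1-homogeneous) and $\widetilde{\mathcal F}$ (whose derivative annihilates $z_1$ to first order) change only by $o(\kappa)$ plus a term that can be absorbed. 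The defect is also helped by the scaling invariance $\tau*$, which preserves $S_m$.

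\textbf{Step 4: build the deformed path.} The $\kappa$ in Step 2 depends on $t$. Use lower semicontinuity of the directional bound (continuity of $\widetilde{\mathcal F}'$ and Lipschitz continuity of $\widetilde{J}_1$) to extend each $z^t$ to a neighbourhood of $t$ with a uniform $\kappa$. Then glue by a partition of unity on the compact set $M$, obtaining a continuous field $z(\cdot)$ with a common small $\kappa$. Define $k(t)$ as the projection onto $S_m\times\mathbb{R}$ of $h(t)+\kappa\eta(t)z(t)$, where $\eta$ is a cutoff equal to $1$ on $M$ and vanishing near $0,1$. Then:
\begin{itemize}
\item $d(k,h)\leq C\kappa$, with $C$ close to $1$;
\item $\Phi(k)\leq\Phi(h)-2\kappa\sqrt{\varepsilon}$.
\end{itemize}
On $M$ the decrease comes from Step 2. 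Off $M$ the values stay below $\widetilde{\gamma}_m-\varepsilon/2$ for small $\kappa$, so they do not affect the maximum. This contradicts the Ekeland inequality $\Phi(k)>\Phi(h)-\sqrt{\varepsilon}\,d(k,h)$.

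\textbf{Main obstacle.} The gluing in Step 4 interacts badly with the nonconvexity of $\widetilde{T}_{(u,\tau)}$ across different points. The constraint $\langle\widetilde{\mathcal F}'_u(h(s)),z_1\rangle=0$ is not preserved when $z^t$ is used at a nearby $s$. My first attempt would be to correct $z^t$ at $s$ by subtracting a multiple of a fixed vector $\phi$ with $\langle\widetilde{\mathcal F}'_u(h(s)),\phi\rangle\neq0$. The correction coefficient is small by continuity, so the convex directional inequality survives up to an extra $\kappa\sqrt{\varepsilon}$. The bound $4$ in (3) leaves exactly the slack needed for this and for the projection error of Step 3.
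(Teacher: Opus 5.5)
\textbf{Step 3 fails, and the argument breaks there.} Your architecture is the paper's: Ekeland on $\widetilde{\Gamma}_m$ with the sup-metric, contradiction over $M$, a partition of unity, $L^1$-renormalization, and a contradiction with the Ekeland inequality. The problem is the claimed cost of the renormalization. Put $w=u+\kappa z_1$, $\tau'=\tau+\kappa z_2$ and $\mu=m/\|w\|_1$. The $1$-homogeneity of $J_1$ and the $C^1$ property of $\widetilde{\mathcal F}$ give
\[
\widetilde{I}((\mu w,\tau'))-\widetilde{I}((w,\tau'))=(\mu-1)\Big(\widetilde{J}_1((u,\tau))-\langle\widetilde{\mathcal F}_u'((u,\tau)),u\rangle\Big)+o(\kappa).
\]
The rescaling moves in the direction $u$, not $z_1$, so $\langle\widetilde{\mathcal F}_u'((u,\tau)),z_1\rangle=0$ does not help.
\begin{itemize}
\item With $v=\tau*u$, the bracket equals $J_1(v)-\langle\mathcal F'(v),v\rangle=P(v)-(N-1)\int_{\mathbb{R}^N}\big(1^*F(v)-f(v)v\big)\,\mbox{d}x$. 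By ($f_5$) this is strictly negative wherever $P(v)\approx 0$; it equals $-\lambda m$ at the solutions of Proposition~\ref{prop:4-9}.
\item $\mu-1=-\kappa J_2'(u;z_1)/m+o(\kappa)$, where $J_2'(u;z_1)=\int_{\{u\neq0\}}\mbox{sgn}(u)z_1\,\mbox{d}x+\int_{\{u=0\}}|z_1|\,\mbox{d}x$. This is in general nonzero and of either sign, because $\ker\widetilde{\mathcal F}_u'$ is not tangent to $S_m$ in any sense. For instance, every $z_1$ supported in $\{u=0\}$ lies in the kernel and has $J_2'(u;z_1)=\|z_1\|_1>0$.
\end{itemize}
So the first-order change of $\widetilde I$ along your projected direction is $\kappa\big(L+\lambda J_2'(u;z_1)\big)+o(\kappa)$. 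Here $L<-4\sqrt{\varepsilon}$ is the quantity negated in (3), and $\lambda=(\langle\mathcal F'(v),v\rangle-J_1(v))/m>0$. When $J_2'(u;z_1)>0$ the extra term is positive and not $O(\sqrt{\varepsilon})$, so it cannot be absorbed in the slack between $4\sqrt{\varepsilon}$ and $\sqrt{\varepsilon}$. The same renormalization also moves $u$ in $BV$ by $|\mu-1|\,\|w\|_{BV}\approx\kappa|J_2'(u;z_1)|\,\|u\|_{BV}/m$. Hence your bound $d(k,h)\le C\kappa$ with $C$ close to $1$ is unjustified as well.

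\textbf{This is structural, not bookkeeping.} Better gluing will not fix it. The obstacle you single out at the end is not the real one: a descent field need not lie in $\widetilde{T}_{h(s)}$ at all.
\begin{itemize}
\item Letting $\varepsilon\to0$, the $z_1$-part of (3) says $J_1'(v;h)\ge0$ for all $h\in\ker\mathcal F'(v)$. By Hahn--Banach this is equivalent to $\frac{J_1(v)}{\langle\mathcal F'(v),v\rangle}\mathcal F'(v)\in\partial J_1(v)$. That is criticality of $J_1$ on a level set of $\mathcal F$, with no $\lambda\,\mbox{sgn}(v)$ term.
\item A constrained critical point on $S_m$ satisfies only $J_1'(v;h)\ge-\lambda J_2'(v;h)$ on $\ker\mathcal F'(v)$.
\end{itemize}
So a deformation inside $S_m\times\mathbb{R}$ cannot turn the negation of (3) into a contradiction: the direction it supplies need not decrease $\Phi$. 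The natural repair puts a multiplier term $\lambda_{(u,\tau)}\big(J_2(u+\kappa z_1)-J_2(u)\big)$ on the left of (3), after which the kernel condition is superfluous. But that is a different statement from the one you are asked to prove.

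The paper avoids the renormalization cost another way. It moves the first component only by $r(1-e^{-r})\Psi(t)W_1(t)=O(r^2)$, while $\tau$ moves at order $r$. This makes the projection error, and its share of $d(g_r,\eta)$, of order $o(r)$. However, it also makes the $W_1$-contribution to the decrease $o(r)$. So copying that device would not supply the missing step for directions with $z_1\neq0$ either.
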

\begin{proof}
  Like in \cite{Jean97}, we define $\Phi:\widetilde{\Gamma}_m\to\mathbb{R}$ by
\[
\Phi(h)=\max_{t\in[0,1]}\widetilde{I}(h(t))
\]
and endow on $\widetilde{\Gamma}_m$ with the metric
\[
d(\xi,\eta)=\max_{t\in[0,1]}\|\xi(t)-\eta(t)\|_E,\quad\forall \xi, \eta\in \widetilde{\Gamma}_m.
\]
Note that $\Phi$ is continuous and $\inf_{h\in\widetilde{\Gamma}_m}\Phi(h)=\widetilde{\gamma}_m>-\infty$, we can apply the Ekeland's variational principle for $\Phi$ on $\widetilde{\Gamma}_m$ (\cite[Theorem 4.1]{MaWi89}) and obtain that there exists $\eta=(\eta_1,\eta_2)\in\widetilde{\Gamma}_m$ such that

(i) $\Phi(\eta)\leq\Phi(g)$;

(ii) $d(\eta, g)\leq\sqrt{\varepsilon}$;

(iii) $\Phi(h)>\Phi(\eta)-\sqrt{\varepsilon}d(h,\eta)$.

We prove that there exists a $t\in[0,1]$ such that
\[
\widetilde{\gamma}_m-\varepsilon\leq\widetilde{I}(\eta(t)),
\]
and
\begin{eqnarray}\label{lem:4-3-1}
\widetilde{J}_1((\eta_1(t)+\kappa z_1,\eta_2(t)))-\widetilde{J}_1(\eta(t))+\kappa z_2\widetilde{J}_1(\eta(t))
-\langle \widetilde{\mathcal{F}}'(\eta(t)),\kappa z\rangle\geq-4\kappa\sqrt{\varepsilon}
\end{eqnarray}
for {$\forall z\in \widetilde{T}_{\eta(t)}$} with $\|z\|_E=1$ and $\forall\kappa>0$ small enough.
By contradiction, assume that for each $t\in S$, where
\[
S=\{t\in[0,1]:\widetilde{\gamma}_m-\varepsilon\leq\widetilde{I}(\eta(t))\},
\]
there exist a {$z(t)\in \widetilde{T}_{\eta(t)}$} with $\|z(t)\|_E=1$, a $\kappa(t)>0$ small such that
\begin{eqnarray}\label{lem:4-3-2}
\widetilde{J}_1((\eta_1(t)+\kappa(t) z_1(t),\eta_2(t)))-\widetilde{J}_1(\eta(t))+\kappa(t) z_2(t)\widetilde{J}_1(\eta(t))
-\langle \widetilde{\mathcal{F}}'(\eta(t)),\kappa(t) z(t)\rangle<-4\kappa(t)\sqrt{\varepsilon}. \nonumber\\
\end{eqnarray}
We divide the proof into three steps to obtain a contradiction.

{\bf Step 1.} We construct a continuous map $W: S\to E$  such that $\forall t\in S$, $0<\|W(t)\|_E\leq1$ and
\[
\widetilde{J}_1((\eta_1(t)+\kappa W_1(t),\eta_2(t)))-\widetilde{J}_1(\eta(t))+\kappa(t) W_2(t)\widetilde{J}_1(\eta(t)) \nonumber\\
-\langle \widetilde{\mathcal{F}}'(\eta(t)),\kappa W(t)\rangle<-4\kappa\sqrt{\varepsilon}
\]
for $\kappa>0$ small enough.

In fact, for each $t\in S$, since $\widetilde{J}_1$, $\widetilde{F}'$, $\eta$ are continuous and (\ref{lem:4-3-2}) holds, there exists a $\delta(t)>0$ such that $\forall s\in [0,1]$ with $|s-t|<\delta(t)$, there holds
\begin{eqnarray}\label{lem:4-3-3}
\widetilde{J}_1((\eta_1(s)+\kappa(t) z_1(t),\eta_2(s)))-\widetilde{J}_1(\eta(s))+\kappa(t) z_2(t)\widetilde{J}_1(\eta(s)) \nonumber\\
-\langle \widetilde{\mathcal{F}}'(\eta(s)),\kappa(t) z(t)\rangle<-4\kappa(t)\sqrt{\varepsilon}.
\end{eqnarray}
For later use, we may assume that $\delta(t)$ is so small that for $|s-t|<\delta(t)$, 
\begin{eqnarray}\label{lem:4-3-3+}
\langle \widetilde{\mathcal{F}}'(\eta(s)), z(t)\rangle\leq
\|\widetilde{\mathcal{F}}'(\eta(s))-\widetilde{\mathcal{F}}'(\eta(t))\|_{*}<\varepsilon.
\end{eqnarray}
For $t\in S$, we denote $B_{\delta(t)}(t)=\{s\in [0,1]: |s-t|<\delta(t)\}$.
Clearly, the family $\{B_{\delta(t)}(t):t\in S\}$ covers $S$.
Since $S$ is compact, the above covering admits a finite refinement $\{B_{\delta(t_i)}(t_i):t_i\in S\}_{i\in\Lambda}$. For each $i\in\Lambda$, let $\rho_i(s)$ be the distance from $s$ to the complement of $B_{\delta(t_i)}(t_i)$, then $\rho_i$ is Lipschitz continuous and vanishes outside $B_{\delta(t_i)}(t_i)$. Set 
\[
m_i(s)=\frac{\rho_i(s)}{\sum_{j\in\Lambda}\rho_j(s)}
\]
and let
\[
W(s)=\sum_{i\in\Lambda}m_i(s)z(t_i).
\]
We have $0<\|W(s)\|_E\leq1$, $\forall s\in S$; moreover, by {the convexity of $\widetilde{J}_1$ with respect to the first variable} and (\ref{lem:4-3-3}),
\begin{eqnarray*}\label{lem:4-3-4}
&&\widetilde{J}_1((\eta_1(s)+\kappa W_1(s),\eta_2(s)))-\widetilde{J}_1(\eta(s))+\kappa W_2(t)\widetilde{J}_1(\eta(s))
-\langle \widetilde{\mathcal{F}}'(\eta(s)),\kappa W(s)\rangle\\
&\leq&\sum_{i\in\Lambda}m_i(s)\left\{\widetilde{J}_1((\eta_1(s)+\kappa z_1(t_i),\eta_2(s)))-\widetilde{J}_1(\eta(s))+\kappa z_2(t_i)\widetilde{J}_1(\eta(s))
-\langle \widetilde{\mathcal{F}}'(\eta(s)),\kappa z(t_i)\rangle\right\}\\
&\leq&\sum_{i\in\Lambda}m_i(s)\left\{\left(1-\frac{\kappa}{\kappa(t_i)}\right)\widetilde{J}_1(\eta(s))
+\frac{\kappa}{\kappa(t_i)}\widetilde{J}_1((\eta_1(s)+\kappa(t_i)z_1(t_i),\eta_2(s)))\right. \\
&&\left.\quad -\widetilde{J}_1(\eta(s))+\kappa z_2(t_i)\widetilde{J}_1(\eta(s))
-\frac{\kappa}{\kappa(t_i)}\langle \widetilde{\mathcal{F}}'(\eta(s)),\kappa(t_i) z(t_i)\rangle\right\}\\
&=&\sum_{i\in\Lambda}m_i(s)\frac{\kappa}{\kappa(t_i)}\bigg\{\widetilde{J}_1((\eta_1(s)+\kappa(t_i)z_1(t_i),\eta_2(s)))-\widetilde{J}_1(\eta(s)) \\
&&\quad +\kappa(t_i) z_2(t_i)\widetilde{J}_1(\eta(s))-\langle \widetilde{\mathcal{F}}'(\eta(s)),\kappa(t_i) z(t_i)\rangle\bigg\}\\
\\&<&-4\kappa\sqrt{\varepsilon}
\end{eqnarray*}
provided that $\kappa<\min\{\kappa(t_i):i\in\Lambda\}$.

Since we work in $BV(\mathbb{R}^N)$ rather than $W^{1,1}(\mathbb{R}^N)$, we need to prove the continuity of $W$. 
Let $s_0\in S$ and let $\Lambda(s_0)$ be the set of $i\in\Lambda$ such that $B_{\delta(t_i)}(t_i)$ contains $s_0$. Since $\Lambda$ is finite, there exists a $\delta(s_0)>0$, depending on $s_0$, such that the set 
$$
B_{\delta(s_0)}(s_0):=\{s\in S: |s-s_0|<\delta(s_0)\}
$$
satisfying $B_{\delta(s_0)}(s_0)\subset B_{\delta(t_i)}(t_i)$ for any $i\in\Lambda(s_0)$. 
If $s_0\in S$ is an isolated point, then nothing can be said. Otherwise, for any $s\in B_{\delta(s_0)}(s_0)$, we have
\begin{eqnarray*}
  &&\|W(s)-W(s_0)\|_{BV} =\Big\|\sum_{i\in\Lambda}m_i(s)z(t_i)-\sum_{j\in\Lambda}m_j(s_0)z(t_j)\Big\|_{BV}\\
&=&\int_{\mathbb{R}^N}\Big|\sum_{i\in\Lambda}m_i(s)Dz(t_i)-\sum_{j\in\Lambda(s_0)}m_j(s_0)Dz(t_j)\Big|
+\int_{\mathbb{R}^N}\Big|\sum_{i\in\Lambda}m_i(s)z(t_i)-\sum_{j\in\Lambda(s_0)}m_j(s_0)z(t_j)\Big|\\
&\leq&\int_{\mathbb{R}^N}\sum_{i\in\Lambda(s_0)}|m_i(s)-m_i(s_0)||Dz(t_i)|
+\int_{\mathbb{R}^N}\sum_{i\in\Lambda\setminus\Lambda(s_0)}m_i(s)|Dz(t_i)|\\
&&+\int_{\mathbb{R}^N}\sum_{i\in\Lambda(s_0)}|m_i(s)-m_i(s_0)||z(t_i)|
+\int_{\mathbb{R}^N}\sum_{i\in\Lambda\setminus\Lambda(s_0)}m_i(s)|z(t_i)|\\
&\to&0
\end{eqnarray*}
as $s\to s_0$. This implies that $W(s)$ is continuous with respect to $s\in S$.

{\bf Step 2.} For $r>0$ small, we construct a path $g_r\in \widetilde{\Gamma}_m$ joining $(u_1,0)$ and $(u_2,0)$ and satisfying
\begin{eqnarray*}\label{lem:4-3-5}
d(g_r,\eta)\leq 2r.
\end{eqnarray*}
To this end, as in \cite{Jean97}, we define a continuous function $\Psi:[0,1]\to[0,1]$ by
\[
\Psi(t)=\left\{
          \begin{array}{ll}
            1, & \hbox{if $\widetilde{\gamma}_m\leq\widetilde{I}(\eta(t))$;} \\
            0, & \hbox{if $\widetilde{I}(\eta(t))\leq\widetilde{\gamma}_m-\varepsilon$.}
          \end{array}
        \right.
\]
For $t\in [0,1]$ and for $W(t)=(W_1(t),W_2(t))$ obtained in Step 1, we define, for $0\leq r\leq1/2$,
\[
\xi_1(t)=\eta_1(t)+r(1-e^{-r})\Psi(t)W_1(t).
\]
By definition, for $t\in[0,1]\setminus S$, $\xi_1(t)=\eta_1(t)$; moreover, 
\[
\|\xi_1(t)\|_1\geq\|\eta_1(t)\|_1-r(1-e^{-r})\Psi(t)\|W_1(t)\|_1\geq\frac{m}{2}
\]
for all $t\in S$ and $r\geq0$ small enough, so $\xi_1(t)\neq0$.
More precisely, we have
\begin{eqnarray}\label{lem:4-3-6}
\left| m-\|\xi_1(t)\|_1\right|\leq r(1-e^{-r})\Psi(t)\|W_1(t)\|_1.
\end{eqnarray}
We define the family of continuous mappings $g_r:[0,1]\to S_m\times\mathbb{R}$ with
\[
g_r(t)=
\left\{
  \begin{array}{ll}
    \left(m\|\xi_1(t)\|_1^{-1}\xi_1(t), \eta_2(t)+r\Psi(t)m\|\xi_1(t)\|_1^{-1}W_2(t)\right), & \hbox{if $t\in S$;} \\
    \eta(t), & \hbox{if $t\in[0,1]\setminus S$.}
  \end{array}
\right.
\]
Since $\eta\in\widetilde{\Gamma}_m$, for $t=0$ and $t=1$, we have
\[
\widetilde{I}(\eta(0))=\widetilde{I}((u_1,0))<\widetilde{\gamma}_m-\varepsilon
\]
and
\[
\widetilde{I}(\eta(1))=\widetilde{I}((u_2,0))<\widetilde{\gamma}_m-\varepsilon
\]
respectively according to \eqref{lem:4-2-3}, hence $\Psi(0)=\Psi(1)=0$ and so we have $\xi_1(0)=u_1$ and $\xi_1(1)=u_2$, this implies that $g_r\in \widetilde{\Gamma}_m$.
Now for $r>0$ small enough and $\forall t\in S$, by (\ref{lem:4-3-6}), we have
\begin{eqnarray*}\label{lem:4-3-7}
&&\|(g_r)_1(t)-\eta_1(t)\|_{BV}\\
&=&\|\left( m-\|\xi_1(t)\|_1\right)\|\xi_1(t)\|_1^{-1}\eta_1(t)
+r\Psi(t)m\|\xi_1(t)\|_1^{-1}W_1(t)\|_{BV}\\
&\leq&r(1-e^{-r})\Psi(t)\|W_1(t)\|_1\|\xi_1(t)\|_1^{-1}\|\eta_1(t)\|_{BV}+r(1-e^{-r})\Psi(t)m\|\xi_1(t)\|_1^{-1}\|W_1(t)\|_{BV}\\
&\leq&2r\Psi(t)\|W_1(t)\|_{BV}
\end{eqnarray*}
since $\sup_{t\in[0,1]}\|\eta_1(t)\|_{BV}<+\infty$. As for the second term in the norm of $E$, we have
\[
|(g_r)_2(t)-\eta_2(t)|_{\mathbb{R}}=r\Psi(t)m\|\xi_1(t)\|_1^{-1}|W_2(t)|_{\mathbb{R}}\leq 2r\Psi(t)|W_2(t)|_{\mathbb{R}}.
\]
Thus $\|g_r(t)-\eta(t)\|_{E}\leq 2r\Psi(t)\|W(t)\|_{E}$ and therefore,
\begin{eqnarray}\label{lem:4-3-8}
  d(g_r,\eta) &\leq& 2r.
\end{eqnarray}

{\bf Step 3.} By the continuity of $\widetilde{I}$ and the compactness of $[0,1]$, we derive from Step 1 that there exists $\widetilde{\delta}>0$ such that for each $t\in S$, $\forall (u,\tau)\in E$ with $\|(u,\tau)-\eta(t)\|_{E}\leq\widetilde{\delta}$, 
\[
\widetilde{J}_1((u+\kappa W_1(t),\eta_2(t)))-\widetilde{J}_1(\eta(t))+\kappa W_2(t)\widetilde{J}_1(\eta(t))
-\langle \widetilde{\mathcal{F}}'(\eta(t)),\kappa W(t)\rangle<-4\kappa\sqrt{\varepsilon}
\]
for $\kappa>0$ small.
Now for any $t\in S$, from Step 2, for $r>0$ sufficiently small, we have 
\begin{eqnarray}\label{lem:4-3-9}
&&\widetilde{J}_1((\eta_1(t)+r(1-e^{-r})\Psi(t)m\|\xi_1(t)\|_1^{-1}W_1(t),\eta_2(t)))-\widetilde{J}_1(\eta(t))
\nonumber\\
&&\quad +r\Psi(t)m\|\xi_1(t)\|_1^{-1}W_2(t)\widetilde{J}_1(\eta(t))-\langle \widetilde{\mathcal{F}}'(\eta(t)),r\Psi(t)m\|\xi_1(t)\|_1^{-1}W(t)\rangle \nonumber\\
&<&-4r\Psi(t)m\|\xi_1(t)\|_1^{-1}\sqrt{\varepsilon}.
\end{eqnarray}
Let us temporarily denote $k=\Psi(t)m\|\xi_1(t)\|_1^{-1}$ for simplicity.
Firstly, for $r>0$ sufficiently small,
\begin{eqnarray}\label{lem:4-3-10}
  && \widetilde{J}_1(g_r(t))-\widetilde{J}_1(\eta(t))
  -\left\{\widetilde{J}_1((\eta_1(t)+r(1-e^{-r})kW_1(t),\eta_2(t)))-\widetilde{J}_1(\eta(t))+rkW_2(t)\widetilde{J}_1(\eta(t))\right\} \nonumber\\
   &=& e^{\eta_2(t)+rkW_2(t)}\int_{\mathbb{R}^N}|D\eta_1(t)+r(1-e^{-r})kDW_1(t)|-e^{\eta_2(t)}\int_{\mathbb{R}^N}|D\eta_1(t)| \nonumber\\
   && -\bigg\{e^{\eta_2(t)}\int_{\mathbb{R}^N}|D\eta_1(t)+r(1-e^{-r})kDW_1(t)|-e^{\eta_2(t)}\int_{\mathbb{R}^N}|D\eta_1(t)| \nonumber \\
   &&\quad +rkW_2(t)e^{\eta_2(t)}\int_{\mathbb{R}^N}|D\eta_1(t)|\bigg\} \nonumber\\
   &=&(e^{rkW_2(t)}-1)e^{\eta_2(t)}\int_{\mathbb{R}^N}|D\eta_1(t)+r(1-e^{-r})kDW_1(t)|-rkW_2(t)e^{\eta_2(t)}\int_{\mathbb{R}^N}|D\eta_1(t)| \nonumber\\
   &=&({rkW_2(t)}+o(r))e^{\eta_2(t)}\int_{\mathbb{R}^N}|D\eta_1(t)+r(1-e^{-r})kDW_1(t)|-rkW_2(t)e^{\eta_2(t)}\int_{\mathbb{R}^N}|D\eta_1(t)| \nonumber\\
   &\leq&({rkW_2(t)}+o(r))e^{\eta_2(t)}r(1-e^{-r})k\int_{\mathbb{R}^N}|DW_1(t)| \nonumber\\
   &=& o(r)rk.
\end{eqnarray}
Secondly, since $\mathcal{F}$ is $C^1$, by the mean value theorem,
\begin{eqnarray}\label{lem:4-3-11}
&& \widetilde{\mathcal{F}}(g_r(t))-\widetilde{\mathcal{F}}(\eta(t))-\langle \widetilde{\mathcal{F}}'(\eta(t)),rkW(t)\rangle \nonumber\\
&=& \widetilde{\mathcal{F}}(g_r(t))-\widetilde{\mathcal{F}}(\eta(t))-\langle \widetilde{\mathcal{F}}'(\eta(t)),(r(1-e^{-r})kW_1(t),rkW_2(t))\rangle 
-\langle \widetilde{\mathcal{F}}'(\eta(t)),(re^{-r}kW_1(t),0)\rangle \nonumber\\
&=& \langle \widetilde{\mathcal{F}}'(\eta(t)+\theta(g_r(t)-\eta(t))),g_r(t)-\eta(t)\rangle
-\langle \widetilde{\mathcal{F}}'(\eta(t)),g_r(t)-\eta(t)\rangle
-\langle \widetilde{\mathcal{F}}'(\eta(t)),(re^{-r}kW_1(t),0)\rangle \nonumber\\
&=& o(r)rk-\langle \widetilde{\mathcal{F}}'(\eta(t)),(re^{-r}kW_1(t),0)\rangle.
\end{eqnarray}
Since $W\in \widetilde{T}_{\eta(t)}$ and $\mathcal{F}$ is $C^1$, the last term in the above equality 
\begin{eqnarray}\label{lem:4-3-12}
  && \langle\widetilde{\mathcal{F}}'(\eta(s)),(re^{-r}kW_1(t),0)\rangle \nonumber\\
  &=& re^{-r}k\sum_{i\in\Lambda}\langle\widetilde{\mathcal{F}}'(\eta(s)),(m_i(s)z_1(t_i),0)\rangle \nonumber\\
  &=& re^{-r}k\sum_{i\in\Lambda(s)}\langle\widetilde{\mathcal{F}}'(\eta(s))-\widetilde{\mathcal{F}}'(\eta(t_i)),(m_i(s)z_1(t_i),0)\rangle \nonumber\\
  &=& o(r)rk
\end{eqnarray}
according to (\ref{lem:4-3-3+}), where $\Lambda(s)$ is the set of $i\in\Lambda$ such that $B_{\delta(t_i)}(t_i)$ contains $s$.
Combining (\ref{lem:4-3-10}), (\ref{lem:4-3-11}) and (\ref{lem:4-3-12}), we conclude that
\begin{eqnarray}\label{lem:4-3-13}
\widetilde{I}(g_r(t))-\widetilde{I}(\eta(t))=\mbox{left-hand~side~of~(\ref{lem:4-3-9})}+o(r)rk.
\end{eqnarray}
Moreover, if $t\notin S$, then $\Psi(t)=0$ and we have $\widetilde{I}(g_r(t))=\widetilde{I}(\eta(t))$. 
Now fix $r>0$ small enough and assume that $t_0\in[0,1]$ is such that $\widetilde{I}(g_r(t_0))=\Phi(g_r)$. 
By \eqref{lem:4-3-9}, \eqref{lem:4-3-13} and the fact that $g_r\in\widetilde{\Gamma}_m$, we have
\[
\widetilde{I}(\eta(t_0))\geq\widetilde{I}(g_r(t_0))\geq\widetilde{\gamma}_m.
\]
This means that $t_0\in S$ and $\Psi(t_0)=1$. Thus
\[
\widetilde{I}(g_r(t_0))-\widetilde{I}(\eta(t_0))\leq-3r\sqrt{\varepsilon}.
\]
Consequently,
\[
\Phi(g_r)+3r\sqrt{\varepsilon}\leq\Phi(\eta).
\]
However, by property (iii) of Ekeland's variational principle, we have
\[
\Phi(g_r)+2r\sqrt{\varepsilon}\geq\Phi(g_r)+\sqrt{\varepsilon}d(g_r,\eta)>\Phi(\eta).
\]
This gives a contradiction, and hence \eqref{lem:4-3-1} holds. The proof of the lemma is completed.
\end{proof}

From Lemma \ref{lem:4-3}, we can derive a Palais-Smale sequence from a minimizing sequence.

\begin{cor}
  Suppose ($f_1$)-($f_4$) hold. Let $\{g_n\}\subset\widetilde{\Gamma}_m$ be such that
\[
\max_{t\in[0,1]}\widetilde{I}(g_n(t))\leq\widetilde{\gamma}_m+\frac{1}{n}.
\]
Then there exists a sequence $\{(u_n,\tau_n)\}\subset{S}_m\times\mathbb{R}$ such that
\begin{description}\label{cor:4-4}
  \item [(1)] $\widetilde{I}((u_n,\tau_n))\in[\widetilde{\gamma}_m-\frac{1}{n},\widetilde{\gamma}_m+\frac{1}{n}]$;
  \item [(2)] $\min_{t\in[0,1]}\|(u_n,\tau_n)-g_n(t)\|_E\leq{\frac{1}{\sqrt {n}}}$;
  \item [(3)] {$\forall z=(z_1,z_2)\in \widetilde{T}_{(u_n,\tau_n)}$} with $\|z\|_E=1$ and $\forall\kappa>0$ small enough, 
      \begin{eqnarray}\label{cor:4-4-1}
        \widetilde{J}_1((u_n+\kappa z_1,\tau_n))-\widetilde{J}_1((u_n,\tau_n))+\kappa z_2\widetilde{J}_1((u_n,\tau_n))
-\langle \widetilde{\mathcal{F}}'((u_n,\tau_n)),\kappa z\rangle\geq-\kappa{\frac{4}{\sqrt {n}}}\nonumber\\
      \end{eqnarray}
with $g$ replaced by $g_n$ in Lemma \ref{lem:4-3}.
\end{description}
\end{cor}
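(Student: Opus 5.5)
This corollary follows by applying Lemma \ref{lem:4-3} once for each $n$ with $\varepsilon=\varepsilon_n:=\frac1n$, so the plan is to check that the hypotheses of that lemma hold for every large $n$ and then read off the conclusions.

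First, the admissibility condition. Lemma \ref{lem:4-3} needs $0<\varepsilon<\widetilde{\gamma}_m-\widetilde{\gamma}_0$. By \eqref{lem:4-2-3} we have $\widetilde{\gamma}_m>\widetilde{\gamma}_0$, so there is $n_0$ with $\frac1n<\widetilde{\gamma}_m-\widetilde{\gamma}_0$ for all $n\ge n_0$. The path $g_n\in\widetilde{\Gamma}_m$ satisfies $\max_{t\in[0,1]}\widetilde{I}(g_n(t))\le \widetilde{\gamma}_m+\varepsilon_n$ by assumption. Hence, for each $n\ge n_0$, Lemma \ref{lem:4-3} applied with $g=g_n$ and $\varepsilon=\varepsilon_n$ gives a point $(u_n,\tau_n)\in S_m\times\mathbb{R}$ with the following properties.
\begin{itemize}
\item $\widetilde{I}((u_n,\tau_n))\in[\widetilde{\gamma}_m-\frac1n,\widetilde{\gamma}_m+\frac1n]$; this is (1).
\item $\min_t\|(u_n,\tau_n)-g_n(t)\|_E\le\sqrt{\varepsilon_n}=\frac{1}{\sqrt n}$; this is (2).
\item The variational inequality of Lemma \ref{lem:4-3}(3) holds with right-hand side $-4\kappa\sqrt{\varepsilon_n}=-\kappa\frac{4}{\sqrt n}$, for every $z\in\widetilde{T}_{(u_n,\tau_n)}$ with $\|z\|_E=1$ and every sufficiently small $\kappa>0$. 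This is exactly \eqref{cor:4-4-1}, so (3) holds.
\end{itemize}
The threshold for "$\kappa$ small" may depend on $n$ and on $z$, which the statement permits.

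For the finitely many indices $n<n_0$ there are two options. One is to reindex, discarding them, which is harmless since only the tail of the sequence matters later. The other is to note that when $\frac1n\ge\widetilde{\gamma}_m-\widetilde{\gamma}_0$ we may apply the lemma with the smaller admissible value $\varepsilon=\frac{1}{n_0}$. This works because every path in $\widetilde{\Gamma}_m$ satisfying the bound for $\frac{1}{n_0}$ also satisfies it for $\frac1n$, and all the conclusions for $\frac{1}{n_0}$ are stronger than those required for $\frac1n$. The only catch is that $g_n$ itself may not satisfy the tighter bound, so (2) relative to $g_n$ can fail. I will therefore simply take $n\ge n_0$ and relabel.

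The main (mild) obstacle is this bookkeeping about the restriction $\varepsilon<\widetilde{\gamma}_m-\widetilde{\gamma}_0$. All of the analytic work has already been done inside Lemma \ref{lem:4-3}.
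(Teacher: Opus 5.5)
Your proposal is correct and is exactly the paper's argument, which the paper leaves implicit: apply Lemma \ref{lem:4-3} with $g=g_n$ and $\varepsilon=\frac1n$, then read off (1)--(3) directly. You also note that the lemma needs $\frac1n<\widetilde{\gamma}_m-\widetilde{\gamma}_0$, so finitely many indices must be discarded; the paper passes over this silently, and it is harmless because Proposition \ref{prop:4-5} uses only the tail of the sequence.
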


Now we can employ Corollary \ref{cor:4-4} to construct a minimizing Palais-Smale sequence in $S_m$ for the restricted functional $I|_{S_m}$ and derive a related approximate variational inequality.
To do this, we define the set
\begin{eqnarray}\label{def:T_u}
{T}_{u}=\left\{v\in BV(\mathbb{R}^N): \langle{\mathcal{F}}'(u),v\rangle=0\right\},
\end{eqnarray}
where
\[
\langle{\mathcal{F}}'(u),v\rangle=\int_{\mathbb{R}^N}\Big(f(u)v+|u|^{1^*-2}uv\Big) \, \mbox{d}x.
\]

\begin{prop}\label{prop:4-5}
  Assume that ($f_1$)-($f_4$) hold. Then there exists a sequence $\{v_n\}\subset S_m$ which is bounded in $BV(\mathbb{R}^N)$ such that 
\begin{eqnarray}\label{prop:4-5-1}
I(v_n)\to\gamma_m
\end{eqnarray}
and
\begin{eqnarray}\label{prop:4-5-11
}
P(v_n)\to0,
\end{eqnarray}
and for any {$h\in {T}_{u_n}$} with $\|h\|_{BV}=1$, and any sufficiently small $\kappa>0$, 
\begin{eqnarray}\label{prop:4-5-2}
J_1(v_n+\kappa h)-J_1(v_n)-\langle\mathcal{F}'(v_n),\kappa h\rangle\geq-\kappa\frac{8}{\sqrt{n}}.
\end{eqnarray}
\end{prop}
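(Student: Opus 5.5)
Following \cite{Jean97}, we transfer the sequence of Corollary \ref{cor:4-4} back to $S_m$ through the scaling map. Choose paths $g_n\in\widetilde{\Gamma}_m$ with $\max_t\widetilde I(g_n(t))\le\widetilde\gamma_m+\frac1n$ and, in addition, $g_n(t)=(h_n(t),0)$ with $h_n\in\Gamma_m$ and $\max_t I(h_n(t))\le\gamma_m+\frac1n$. This is possible since $\widetilde\gamma_m=\gamma_m$. We may also arrange that every $h_n(t)$ lies in $\{P\ge 0\}$ or in a region where $\|Dh_n(t)\|_1$ is controlled: replace $h_n(t)$ by $\tau_{n}(t)*h_n(t)$, rescaled along the fibre $\tau\mapsto I(\tau*u)$, which by Lemma \ref{lem:2-5} and ($f_4$) has a unique maximum. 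Corollary \ref{cor:4-4} then gives $(u_n,\tau_n)$ with $|\tau_n|\le 1/\sqrt n$ by item (2). We set
\[
v_n=\tau_n*u_n\in S_m .
\]
Item (1) yields $I(v_n)=\widetilde I((u_n,\tau_n))\to\widetilde\gamma_m=\gamma_m$.

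To get $P(v_n)\to0$, test (\ref{cor:4-4-1}) with $z=(0,\pm1)\in\widetilde T_{(u_n,\tau_n)}$; this is admissible because the first component is zero. The left side becomes $\pm\kappa\bigl(\widetilde J_1((u_n,\tau_n))-\langle\widetilde{\mathcal F}'_\tau((u_n,\tau_n)),1\rangle\bigr)$. By the formula (\ref{def:tideF'}) this bracket equals exactly
\[
e^{\tau_n}\int|Du_n|-Ne^{-N\tau_n}\int\bigl(f(e^{N\tau_n}u_n)e^{N\tau_n}u_n-F\bigr)-e^{1^*\tau_n}\int|u_n|^{1^*}=P(v_n).
\]
Using both signs gives $|P(v_n)|\le 4/\sqrt n$. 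This is the geometric meaning of the Pohozaev identity mentioned in the introduction.

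Boundedness in $BV$ comes from combining $I(v_n)\to\gamma_m$ with $P(v_n)\to0$. We have
\[
I(v_n)-\tfrac1{1^*}\,\cdot\,\text{(suitable multiple of) }P(v_n)=\Bigl(1-\tfrac1{1^*}\Bigr)\|Dv_n\|_1+\int\Bigl(\tfrac{N}{1^*}\widetilde F(v_n)-F(v_n)\Bigr)\,dx .
\]
By ($f_3$), $\widetilde F\ge(\theta-1)F$. The integrand is nonnegative provided $\frac{N}{1^*}(\theta-1)\ge1$, i.e.\ $(N-1)(\theta-1)\ge1$. If this fails for the given $\theta$, I would instead eliminate the $\|Dv_n\|_1$ term. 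The combination $P-I$ controls $\int|v_n|^{1^*}$ and $\int\widetilde F$ from the positivity in ($f_3$). Then the Gagliardo--Nirenberg inequality (\ref{ineq:G-N}) together with (\ref{lem:2-3-1}) bounds $\mathcal F(v_n)$ by $\varepsilon m^{1/N}\|Dv_n\|_1$ plus a bounded term. Hence $\|Dv_n\|_1\le C$, and $\|v_n\|_1=m$.

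Finally, the variational inequality (\ref{prop:4-5-2}). Given $h\in T_{v_n}$ with $\|h\|_{BV}=1$, put $z_1=(-\tau_n)*h$ and $z_2=0$. Then $\langle\widetilde{\mathcal F}_u'((u_n,\tau_n)),z_1\rangle=\langle\mathcal F'(v_n),h\rangle=0$ by the chain rule for the linear map $w\mapsto\tau_n*w$. So $z/\|z\|_E\in\widetilde T_{(u_n,\tau_n)}$, and $\|z_1\|_{BV}\le e^{|\tau_n|}\le 2$.

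Since $\tau*(u_n+\kappa z_1)=v_n+\kappa h$, inequality (\ref{cor:4-4-1}) rescaled by $\|z\|_E$ reads
\[
J_1(v_n+\kappa h)-J_1(v_n)-\langle\mathcal F'(v_n),\kappa h\rangle\ge-\kappa\|z_1\|_E\frac4{\sqrt n}\ge-\kappa\frac{8}{\sqrt n}.
\]
The main obstacle I expect is the boundedness step, which requires the right linear combination of $I$ and $P$ under ($f_3$)--($f_4$). The choice of paths $g_n$ ensuring $\tau_n\to0$ is routine by comparison.
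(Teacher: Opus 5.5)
Your proposal is correct and follows essentially the same route as the paper's proof: paths $g_n=(h_n,0)$, so that item (2) of the corollary gives $|\tau_n|\le 1/\sqrt n$; the test directions $z=(0,\pm1)$, giving $|P(v_n)|\le 4/\sqrt n$; boundedness from the termwise nonnegative combination $I-P$; and the direction $z_1=(-\tau_n)*h$ with $\|z_1\|_{BV}\le 2$ for the variational inequality. Two remarks: your first boundedness attempt via $I-\frac{1}{1^*}P$ can never work, since $\theta<1^*$ forces $(N-1)(\theta-1)<1$, so the $I-P$ fallback is the real argument; and the fibrewise rescaling of $h_n(t)$ you mention is unnecessary and would not in general preserve the endpoints $u_1,u_2$.
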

\begin{proof}
By Lemma \ref{lem:4-2}, we have $\Gamma_m\subset\widetilde{\Gamma}_m$ and $\gamma_m=\widetilde{\gamma}_m$. 
Thus we may assume that there exists a minimizing sequence $\{g_n\}\subset\widetilde{\Gamma}_m$ of the form $g_n(t)=((g_n)_1(t),0)\in E$, $\forall t\in[0,1]$, such that $\Phi(g_n)\in[\widetilde{\gamma}_m-\frac{1}{n},\widetilde{\gamma}_m+\frac{1}{n}]$.
Then for each $n\in\mathbb{N}$, there exists $(u_n,\tau_n)\in S_m\times\mathbb{R}$ satisfying the properties presented in Corollary \ref{cor:4-4}.

For each $(u_n,\tau_n)$, if we choose $z=(0,1)$ and $z = (0,-1)$ respectively in (\ref{cor:4-4-1}), then by \eqref{def:tideF'}, we obtain
\begin{eqnarray}\label{prop:4-5-3}
  P(\tau_n*u_n)&=&e^{\tau_n}\int_{\mathbb{R}^N}|Du_n|
  -Ne^{-N\tau_n}\int_{\mathbb{R}^N}\Big(f(e^{N\tau_n}u_n)e^{N\tau_n}u_n-F(e^{N\tau_n}u_n)\Big) \, \mbox{d}x\nonumber\\
  &&-e^{1^*\tau_n}\int_{\mathbb{R}^N}|u_n|^{1^*} \, \mbox{d}x
  = \frac{4}{\sqrt{n}}.
\end{eqnarray} 
On the other hand, by (1) of Corollary \ref{cor:4-4},
\begin{eqnarray}\label{prop:4-5-4}
  \widetilde{I}((u_n,\tau_n)) = e^{\tau_n}\int_{\mathbb{R}^N}|Du_n|
  -e^{-N\tau_n}\int_{\mathbb{R}^N}F(e^{N\tau_n}u_n) \, \mbox{d}x-\frac{e^{1^*\tau_n}}{1^*}\int_{\mathbb{R}^N}|u_n|^{1^*} \, \mbox{d}x
  =\widetilde{\gamma}_m+o(1). \nonumber\\
\end{eqnarray}
Subtracting (\ref{prop:4-5-3}) from (\ref{prop:4-5-4}) yields 
\begin{eqnarray}\label{prop:4-5-4-I-P}
&&Ne^{-N\tau_n}\int_{\mathbb{R}^N}\Big(f(e^{N\tau_n}u_n)e^{N\tau_n}u_n-\big(1+\frac{1}{N}\big)F(e^{N\tau_n}u_n)\Big) \, \mbox{d}x \nonumber\\
&&\quad +\frac{1}{N}e^{1^*\tau_n}\int_{\mathbb{R}^N}|u_n|^{1^*} \, \mbox{d}x
\leq \widetilde{\gamma}_m+o(1)+\frac{4}{\sqrt{n}}.
\end{eqnarray} 
By \eqref{ineq:f-F-1}, this implies that the two integrals on the left-hand side of the above inequality are both bounded.
Using \eqref{lem:2-3-1} and the H\"{o}lder inequality, we conclude that $\{e^{-N\tau_n}\int_{\mathbb{R}^N}F(e^{N\tau_n}u_n)\mbox{d}x\}$ is bounded and then 
$\{e^{\tau_n}\int_{\mathbb{R}^N}|Du_n|\}$ is also bounded by (\ref{prop:4-5-4}).

Now let $v_n=\tau_n*u_n$. Clearly, $\{v_n\}\subset S_m$ and is bounded in $BV(\mathbb{R}^N)$;
moreover, 
\[
I(v_n)=I(\tau_n*u_n)=\widetilde{I}((u_n,\tau_n))\in [{\gamma}_m-\frac{1}{n},{\gamma}_m+\frac{1}{n}].
\]
For any $h\in BV(\mathbb{R}^N)$ with $\|h\|_{BV}=1$, using the change of variables $y=e^{\tau_n}x$ and through direct computation, we have
\begin{eqnarray}\label{prop:4-5-5}
  &&J_1(v_n+\kappa h)-J_1(v_n)-\langle\mathcal{F}'(v_n),\kappa h\rangle \nonumber\\
&=& e^{\tau_n}\int_{\mathbb{R}^N}|D_yu_n(y)+\kappa e^{-N\tau_n}D_yh(e^{-\tau_n}y)|-e^{\tau_n}\int_{\mathbb{R}^N}|D_yu_n(y)| \nonumber\\
&&-\kappa\int_{\mathbb{R}^N}f(e^{N\tau_n}u_n(y))e^{-N\tau_n}h(e^{-\tau_n}y) \, \mbox{d}y \nonumber\\
&&-\kappa\int_{\mathbb{R}^N}e^{(1^*-1)N\tau_n}|u_n(y)|^{1^*-2}u_n(y)e^{-N\tau_n}h(e^{-\tau_n}y) \, \mbox{d}y \nonumber\\
&=&\widetilde{J}_1((u_n,\tau_n)+\kappa(\hat{h},0))-\widetilde{J}_1((u_n,\tau_n))
-\langle\widetilde{\mathcal{F}}'((u_n,\tau_n)),\kappa(\hat{h},0)\rangle \nonumber\\
&\geq&-\kappa\frac{8}{\sqrt{n}}
\end{eqnarray}
according to Corollary \ref{cor:4-4}, where $\hat{h}(y)=e^{-N\tau_n}h(e^{-\tau_n}y)$. Here note that 
\begin{eqnarray*}
  \|(\hat{h},0)\|_{E} &=& \|\hat{h}\|_{BV}
=\int_{\mathbb{R}^N}|\hat{h}| \, \mbox{d}x+\int_{\mathbb{R}^N}|D\hat{h}|\\
&=&\int_{\mathbb{R}^N}|{h}| \, \mbox{d}x+e^{-\tau_n}\int_{\mathbb{R}^N}|{Dh}|\\
&\leq&2\|h\|_{BV}
\end{eqnarray*}
Since $e^{-\tau_n}\leq2$ for $n$ large enough, we obtain $\hat{h}\in BV(\mathbb{R}^N)$. 
This completes the proof.
\end{proof}

We remark that for arbitrary $h\in T_u$,
if we replace $h$ by $h/\|h\|_{BV}$ and $\kappa$ by $\kappa\|h\|_{BV}$ respectively, we easily deduce that \eqref{prop:4-5-5} still holds.
We retain $\langle\mathcal{F}'(v_n),\kappa h\rangle$ in \eqref{prop:4-5-2} to make the inequality look "more complete".
We next provide a useful formula derived from (\ref{prop:4-5-2}). 

\begin{cor}\label{cor:4-6}
  Assume that ($f_1$)-($f_4$) hold. Let $\{v_n\}\subset S_m$ be the bounded sequence of $I|_{S_m}$ obtained in Proposition \ref{prop:4-5}. Then $\forall\varphi\in C^1(\mathbb{R}^N)\cap L^{\infty}(\mathbb{R}^N)$, we have
\begin{eqnarray}\label{cor:4-6-1}
  \int_{\mathbb{R}^N}\varphi|Dv_n|-\frac{1}{k_n}\langle\mathcal{F}'(v_n),\varphi v_n\rangle\int_{\mathbb{R}^N}|Dv_n|
  +\int_{\mathbb{R}^N}|v_nD\varphi| \, \mbox{d}x \geq-\frac{8}{\sqrt{n}},
\end{eqnarray}  
where $k_n:=\langle\mathcal{F}'(v_n),v_n\rangle$.
\end{cor}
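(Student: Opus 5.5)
The plan is to apply the variational inequality \eqref{prop:4-5-2} of Proposition \ref{prop:4-5} to a test direction built from $\varphi v_n$. This direction need not lie in $T_{v_n}$, so we first project it onto $T_{v_n}$ using $v_n$ itself. Set $k_n=\langle\mathcal{F}'(v_n),v_n\rangle$. By \eqref{ineq:f-F-1}, $k_n=\int (f(v_n)v_n+|v_n|^{1^*})\,\mbox{d}x>0$ because $v_n\neq0$. Define
\[
h=\varphi v_n-\frac{\langle\mathcal{F}'(v_n),\varphi v_n\rangle}{k_n}\,v_n .
\]
Since $\varphi\in C^1\cap L^\infty$, we have $\varphi v_n\in BV(\mathbb{R}^N)$ with $D(\varphi v_n)=\varphi Dv_n+v_nD\varphi$; here $D\varphi$ should be bounded, and I would add this to the hypotheses if it is needed. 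By construction $\langle\mathcal{F}'(v_n),h\rangle=0$, so $h\in T_{v_n}$. Using the remark after Proposition \ref{prop:4-5} (rescaling $h$ and $\kappa$ by $\|h\|_{BV}$), for all small $\kappa>0$ we get
\[
J_1(v_n+\kappa h)-J_1(v_n)\geq-\kappa\frac{8}{\sqrt n}\|h\|_{BV}.
\]
The $\mathcal{F}'$ term vanishes because $h\in T_{v_n}$.

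Next we estimate the left side from above. Write $c_n=\langle\mathcal{F}'(v_n),\varphi v_n\rangle/k_n$. Then
\[
D(v_n+\kappa h)=(1+\kappa\varphi-\kappa c_n)Dv_n+\kappa v_nD\varphi .
\]
For $\kappa$ small, $1+\kappa\varphi-\kappa c_n>0$ since $\varphi$ is bounded. Hence
\[
J_1(v_n+\kappa h)\leq\int(1+\kappa\varphi-\kappa c_n)|Dv_n|+\kappa\int|v_nD\varphi|\,\mbox{d}x .
\]
Subtracting $J_1(v_n)$ and dividing by $\kappa$ gives
\[
\int\varphi|Dv_n|-c_n\int|Dv_n|+\int|v_nD\varphi|\,\mbox{d}x\geq-\frac{8}{\sqrt n}\|h\|_{BV}.
\]
This is \eqref{cor:4-6-1} apart from the factor $\|h\|_{BV}$.

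The main obstacle is that factor. The statement has the bare constant $-8/\sqrt n$, while the argument produces $-8\|h\|_{BV}/\sqrt n$. The quantity $\|h\|_{BV}$ is bounded uniformly in $n$ by $C(\|\varphi\|_\infty,\|D\varphi\|_\infty)$ times $\sup_n\|v_n\|_{BV}$, since $\{v_n\}$ is bounded in $BV$. The coefficient $c_n$ is also controlled: $|\langle\mathcal{F}'(v_n),\varphi v_n\rangle|\leq\|\varphi\|_\infty k_n$, because $f(t)t>0$ by \eqref{ineq:f-F-1}. So the error term is in any case $O(1/\sqrt n)$, which is what the later compactness arguments need. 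To obtain the exact constant, I would rerun the argument behind \eqref{prop:4-5-2} with $h$ normalized and absorb the norm into the freedom in choosing $\kappa$. Failing that, I would state the inequality with $-C/\sqrt n$, which is harmless since only the limit $n\to\infty$ is used afterwards.
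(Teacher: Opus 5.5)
Your argument is the paper's argument. With $c_n=\langle\mathcal{F}'(v_n),\varphi v_n\rangle/k_n$, the direction $h=\varphi v_n-c_nv_n\in T_{v_n}$ is inserted into \eqref{prop:4-5-2}. The coefficient $1+\kappa\varphi-\kappa c_n$ of $Dv_n$ is positive for small $\kappa$, $J_1(v_n+\kappa h)$ is bounded by the triangle inequality, and one divides by $\kappa$; you also check $k_n>0$, which the paper leaves implicit. Your bookkeeping of the constant is correct, and it exposes a slip in the paper. The inequality \eqref{prop:4-5-2} is stated only for $\|h\|_{BV}=1$. The paper inserts the unnormalized $h$, keeps $-\kappa\frac{8}{\sqrt{n}}$, and appeals to the remark after Proposition~\ref{prop:4-5}. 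But the rescaling $h\mapsto h/\|h\|_{BV}$, $\kappa\mapsto\kappa\|h\|_{BV}$ described there gives exactly your $-\kappa\frac{8}{\sqrt{n}}\|h\|_{BV}$.

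What does not work is your first proposed repair, namely renormalizing $h$ and using the freedom in $\kappa$ to recover the bare constant. The left-hand side of \eqref{cor:4-6-1} is positively homogeneous of degree one in $\varphi$: replacing $\varphi$ by $M\varphi$ with $M>0$ multiplies each of its three terms by $M$, and $C^1(\mathbb{R}^N)\cap L^\infty(\mathbb{R}^N)$ is a cone. So the inequality with the $\varphi$-independent bound $-8/\sqrt{n}$ for all admissible $\varphi$ is equivalent to the left-hand side being $\geq 0$ for every $\varphi$. That is an exact stationarity property of $v_n$, which no approximate inequality like \eqref{prop:4-5-2} supplies, and no choice of $\kappa$ changes this.

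The statement you can actually prove is your fallback, with right-hand side $-\frac{8}{\sqrt{n}}\|h\|_{BV}\geq-\frac{C}{\sqrt{n}}$. By your bound $|c_n|\leq\|\varphi\|_\infty$, one may take $C=8\,(2\|\varphi\|_\infty+\|D\varphi\|_\infty)\sup_n\|v_n\|_{BV}$. This suffices for every later use, because the limit $n\to\infty$ is always taken first. In Proposition~\ref{prop:loc-compact} the constant depends on $\rho$ through $|D\varphi_\rho|\leq 2/\rho$, but the limit in $n$precedes $\rho\to0$; in Proposition~\ref{prop:4-8} the radius $R$ is fixed.

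Finally, your caveat on $D\varphi$ costs nothing. If $\int_{\mathbb{R}^N}|v_nD\varphi|\,\mbox{d}x=+\infty$ the inequality is trivial, and otherwise $\varphi v_n\in BV(\mathbb{R}^N)$. Boundedness of $D\varphi$ is needed only to make $C$ uniform in $n$, and it holds for all test functions used afterwards.
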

\begin{proof}
  For $\forall\varphi\in BV(\mathbb{R}^N)\cap L^{\infty}(\mathbb{R}^N)$, we set
  \[
  h=\varphi v_n-\frac{1}{k_n}\langle\mathcal{F}'(v_n),\varphi v_n\rangle v_n,
  \]
  then $h\in T_{v_n}$. Inserting $h$ into (\ref{prop:4-5-2}) we get
  \[
  \int_{\mathbb{R}^N}\left|\left(1-\frac{\kappa}{k_n}\langle\mathcal{F}'(v_n),\varphi v_n\rangle+\kappa\varphi\right)Dv_n\right|
  +\kappa\int_{\mathbb{R}^N}|v_nD\varphi| \, \mbox{d}x-\int_{\mathbb{R}^N}|Dv_n|\geq-\kappa\frac{8}{\sqrt{n}}.
  \]
  For sufficiently small $\kappa>0$, the coefficient of $Dv_n$ in the bracket of the first integral is positive, so we can extract it from the total variational.
  In this way, we derive \eqref{cor:4-6-1}.
\end{proof}

In the following, for a sequence $\{u_n\}\subset BV(\mathbb{R}^N)\setminus\{0\}$, we use the notation
\begin{eqnarray*}\label{def:eta-n}
  \eta_n(u_n) &=& \frac{\|Du_n\|_{1}}{\langle\mathcal{F}'(u_n),u_n\rangle}
=\frac{\int_{\mathbb{R}^N}|Du_n|}{\int_{\mathbb{R}^N}f(u_n)u_n \, \mbox{d}x+\int_{\mathbb{R}^N}|u_n|^{1^*} \, \mbox{d}x}
\end{eqnarray*}
and
\begin{eqnarray*}\label{def:eta}
  \eta(u_n) &=& \liminf_{n\to\infty}\eta_n(u_n).
\end{eqnarray*}

We next employ the Lions' concentration compactness lemma to prove the local compactness of the minimizing sequence in $L^{1^*}(\mathbb{R}^N)$.

\begin{prop}\label{prop:loc-compact}
  Assume that ($f_1$)-($f_4$) hold. 
  Let $\{v_n\}\subset S_m$ be the bounded sequence of $I|_{S_m}$  obtained in Proposition \ref{prop:4-5} at level $c\in(0,\frac{1}{N}S^N)$. Then there exist a subsequence (up to translations in $\mathbb{R}^N$, still denoted by $\{v_n\}$) and $v\in S_m$ such that $v_n\to v$ in $L^{1^*}_{loc}(\mathbb{R}^N)$, provided that $\eta(v_n)\leq1$.
\end{prop}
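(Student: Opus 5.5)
We follow the Lions concentration--compactness scheme in $BV$, with the approximate variational inequality of Corollary \ref{cor:4-6} replacing the Euler--Lagrange equation. Since $\{v_n\}$ is bounded in $BV(\mathbb{R}^N)$ and $\|v_n\|_1=m$, we first exclude vanishing. If $\sup_{y}\int_{B_1(y)}|v_n|\,\mbox{d}x\to0$, Lions' lemma (the $BV$ version, using the Gagliardo--Nirenberg inequality \eqref{ineq:G-N}) gives $v_n\to0$ in $L^r$ for $1<r<1^*$. Then \eqref{lem:2-3-1}--\eqref{lem:2-3-2} give $\int F(v_n)\to0$ and $\int f(v_n)v_n\to0$, so $I(v_n)=\|Dv_n\|_1-\frac1{1^*}\|v_n\|_{1^*}^{1^*}+o(1)$ and $P(v_n)=\|Dv_n\|_1-\|v_n\|_{1^*}^{1^*}+o(1)\to0$. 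Setting $\ell=\lim\|Dv_n\|_1$, we get $c=\ell/N$. Combined with $\ell\le S^{-1^*}\ell^{1^*}$, either $\ell=0$, contradicting $c>0$, or $\ell\ge S^N$, contradicting $c<\frac1N S^N$. Hence, after translating, $v_n\rightharpoonup v\neq0$ weakly$^*$ in $BV$, $v_n\to v$ in $L^r_{loc}$ for $r<1^*$, and a.e.

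Next we prove local compactness in $L^{1^*}$. By the concentration--compactness principle for measures, we may assume $|Dv_n|\rightharpoonup\mu\ge|Dv|+\sum_j\mu_j\delta_{x_j}$ and $|v_n|^{1^*}\rightharpoonup\nu=|v|^{1^*}+\sum_j\nu_j\delta_{x_j}$ with $S\nu_j^{1/1^*}\le\mu_j$. It suffices to show $\nu_j=0$ for every $j$. Fix an atom $x_j$ and test Corollary \ref{cor:4-6} with the cutoff $\varphi_\rho(x)=\varphi((x-x_j)/\rho)$, where $\varphi\in C_c^1$, $0\le\varphi\le1$, and $\varphi=1$ near $0$. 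Note that $\int|v_nD\varphi_\rho|\,\mbox{d}x\to\int|v||D\varphi_\rho|\,\mbox{d}x$, which tends to $0$ as $\rho\to0$ by H\"older and $v\in L^{1^*}$. Also $\langle\mathcal{F}'(v_n),\varphi_\rho v_n\rangle\to\int\varphi_\rho\,(f(v)v)+\int\varphi_\rho\,d\nu$, whose $\rho\to0$ limit is $\nu_j$. This gives
\[
\mu_j\ \ge\ \frac{\ell}{k}\,\nu_j,\qquad k=\lim k_n,\ \ell=\lim\|Dv_n\|_1,
\]
after letting $n\to\infty$ and then $\rho\to0$; we apply the inequality to $-\varphi_\rho$ as well, or use $1-\varphi_\rho$, to obtain the opposite inequality. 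The hypothesis $\eta(v_n)\le1$ is precisely $\ell/k\le1$ along a subsequence, and the inequality in the usable direction reads $\mu_j\le\frac{\ell}{k}\nu_j\le\nu_j$. Combined with $S\nu_j^{(N-1)/N}\le\mu_j$, this yields $\nu_j=0$ or $\nu_j\ge S^N$.

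The main obstacle is ruling out the alternative $\nu_j\ge S^N$ via the energy bound. We use $I(v_n)-\frac1N\cdots$ together with $P(v_n)\to0$:
\[
c=\lim\Big(I(v_n)-\tfrac1N P(v_n)\Big)\ge\lim\Big[\tfrac{N-1}{N}\|Dv_n\|_1+\int\big(f(v_n)v_n-\tfrac{N+1}{N}F(v_n)\big)\Big]+\cdots
\]
Rather than this form, it is cleaner to write $c=\lim\big[\int(f(v_n)v_n-F(v_n))\,\mbox{d}x+\frac1N\|v_n\|_{1^*}^{1^*}\big]$ from $I-P$ rearranged as in \eqref{prop:4-5-4-I-P}; the first integral is nonnegative by \eqref{ineq:f-F-1}. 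Hence $c\ge\frac1N\nu_j\ge\frac1N S^N$, a contradiction. Carefully justifying the direction of the localized inequality (only a one-sided inequality is available from \eqref{cor:4-6-1}) and passing the total variation term to the limit, using lower semicontinuity on open sets and weak$^*$ convergence of $|Dv_n|$, is where I expect the most care to be needed. Once all $\nu_j=0$, the Brezis--Lieb lemma gives $v_n\to v$ in $L^{1^*}_{loc}$. To conclude $v\in S_m$, we rule out dichotomy of the $L^1$ mass by the same splitting argument, or otherwise defer this to the compactness step using $(f_5)$.
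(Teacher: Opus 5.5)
Your core argument is the paper's. You test Corollary~\ref{cor:4-6} with $\pm\varphi_\rho$ to get the two-sided localized estimate \eqref{prop:loc-compact-4}, hence $\mu_j=\frac{\ell}{k}\nu_j\le\nu_j$ when $\eta(v_n)\le 1$. Combined with $S\nu_j^{1/1^*}\le\mu_j$ this forces $\nu_j=0$ or $\nu_j\ge S^N$, and the latter is excluded by $c=\lim (I(v_n)-P(v_n))\ge\frac1N\nu_j$. Your worry about one-sidedness is unfounded: $T_{v_n}$ is a linear space, so both signs of $\varphi$ are admissible. Also, since $\varphi_\rho\in C_c$, vague convergence of $|Dv_n|$ is all you need to pass to the limit; no lower semicontinuity argument is required.

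Where you go beyond the paper is your preliminary nonvanishing step. Vanishing would kill $\int F(v_n)$ and $\int f(v_n)v_n$, so $P(v_n)\to 0$ and $I(v_n)\to c$ give $c=\ell/N$ with $\ell\in\{0\}\cup[S^N,\infty)$. That is impossible for $c\in(0,\frac1N S^N)$. The paper's proof never does this. It therefore neither justifies the phrase ``up to translations'' nor rules out $v=0$, in which case the conclusion carries no information. Your energy argument also correctly replaces the paper's later claim in Proposition~\ref{prop:4-8} that vanishing would force $\|v_n\|_1\to 0$. That claim is false, since vanishing only yields decay in $L^r$ for $1<r<1^*$.

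Two corrections. First, your displayed identity for $I-P$ is wrong. By \eqref{prop:4-5-4-I-P} it is $I(v_n)-P(v_n)=N\int_{\mathbb{R}^N}\big(f(v_n)v_n-(1+\frac1N)F(v_n)\big)\,\mbox{d}x+\frac1N\int_{\mathbb{R}^N}|v_n|^{1^*}\,\mbox{d}x$. Its first term is nonnegative by \eqref{ineq:f-F-1}, so your conclusion $c\ge\frac1N\nu_j$ still stands.

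Second, and more substantively, neither your argument nor the paper's proof establishes the claimed $v\in S_m$. Fatou only gives $0<\|v\|_1\le m$. Your first suggested fix, ``the same splitting argument'', is not available under the present hypotheses. Excluding dichotomy in Proposition~\ref{prop:4-8} uses $(f_5)$ and the bound $k_n\le 1$ from Proposition~\ref{prop:4-7}, which requires $\gamma_m$ small. Neither is assumed here. Under $(f_1)$--$(f_4)$, what you actually prove (and what the paper actually proves) is $v_n\to v\neq 0$ in $L^{1^*}_{loc}(\mathbb{R}^N)$ with $\|v\|_1\le m$. The identity $\|v\|_1=m$ has to be deferred to Proposition~\ref{prop:4-8}, as your second option says.
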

\begin{proof}
   Employing the Radon-Nikod\'{y}m Theorem and arguing as in the Lions' concentration compactness lemma \cite{Lions85}, we obtain that there exist two nonnegative measures $\mu,\nu\in\mathcal{M}(\mathbb{R}^N)$ such that $|Dv_n|\rightharpoonup\mu$ and $|v_n|^{1^*}\rightharpoonup\nu$ in $\mathcal{M}(\mathbb{R}^N)$; moreover, there exists an at most countable index set $J_{\nu}$ such that
\begin{eqnarray}\label{prop:loc-compact-1}
\nu=|v|^{1^*}+\sum_{j\in J_{\nu}}\nu_j\delta_{x_j},\quad\nu_j>0,
\end{eqnarray}
where $x_j$'s are singular points of the measure $\nu$.
  For an arbitrary $j\in J_{\nu}$, we define $\varphi_{\rho}\in C_0^\infty(\mathbb{R}^N)$ by
\begin{eqnarray*}
\varphi_\rho(x)=\left\{
             \begin{array}{ll}
               1, & \hbox{$|x-x_j|\leq\frac{\rho}{2}$;} \\
               0, & \hbox{$|x-x_j|\geq\rho$;} \\
               |\nabla\varphi_\rho|\leq\frac{2}{\rho}, & \hbox{else.}
             \end{array}
           \right.
\end{eqnarray*}
Since $\{v_n\}$ is bounded in $BV(\mathbb{R}^N)$, the sequence $\{\varphi_\rho v_n\}$ is also bounded in $BV(\mathbb{R}^N)$. 
Taking $\varphi=\varphi_\rho$ as a test function in \eqref{cor:4-6-1}, we obtain
\begin{eqnarray}\label{prop:loc-compact-2}
  \int_{\mathbb{R}^N}\varphi_\rho|Dv_n|-\frac{1}{k_n}\langle\mathcal{F}'(v_n),\varphi_\rho v_n\rangle\int_{\mathbb{R}^N}|Dv_n|
  +\int_{\mathbb{R}^N}|v_nD\varphi_\rho| \, \mbox{d}x \geq-\frac{8}{\sqrt{n}}.
\end{eqnarray}
Taking $\varphi=-\varphi_\rho$ as a test function in \eqref{cor:4-6-1}, we can similar get
\begin{eqnarray}\label{prop:loc-compact-3}
  -\int_{\mathbb{R}^N}\varphi_\rho|Dv_n|+\frac{1}{k_n}\langle\mathcal{F}'(v_n),\varphi_\rho v_n\rangle\int_{\mathbb{R}^N}|Dv_n|
  +\int_{\mathbb{R}^N}|v_nD\varphi_\rho| \, \mbox{d}x \geq-\frac{8}{\sqrt{n}}.
\end{eqnarray}
Combining \eqref{prop:loc-compact-2} and \eqref{prop:loc-compact-3}, we deduce that
\begin{eqnarray}\label{prop:loc-compact-4}
  \left|\int_{\mathbb{R}^N}\varphi_\rho|Dv_n|-{\eta_n(v_n)}\int_{\mathbb{R}^N}f(v_n)v_n\varphi_\rho \, \mbox{d}x-{\eta_n(v_n)}\int_{\mathbb{R}^N}|v_n|^{1^*}\varphi_\rho \, \mbox{d}x\right|
  \leq\int_{\mathbb{R}^N}|v_nD\varphi_\rho| \, \mbox{d}x +\frac{8}{\sqrt{n}}. \nonumber\\
\end{eqnarray}
For $1\leq p<1^*$, the embedding $BV(\mathbb{R}^N)\hookrightarrow L^p(\mathbb{R}^N)$ is locally compact. 
Thus, 
\[
\lim_{\rho\to0}\lim_{n\to\infty}\int_{\mathbb{R}^N}|v_nD\varphi_\rho| \, \mbox{d}x=0,\quad
\lim_{\rho\to0}\lim_{n\to\infty}\int_{\mathbb{R}^N}f(v_n)v_n\varphi_\rho \, \mbox{d}x=0.
\]
Therefore, (\ref{prop:loc-compact-4}) implies that
\begin{eqnarray}\label{prop:loc-compact-5}
  \left|\int_{\mathbb{R}^N}\varphi_\rho|Dv_n|-\eta_n(v_n)\int_{\mathbb{R}^N}|v_n|^{1^*}\varphi_\rho \, \mbox{d}x\right|\to0.
\end{eqnarray}
Employing (\ref{prop:loc-compact-1}), we derive from (\ref{prop:loc-compact-5}) that
\begin{eqnarray*}
  \eta(v_n)\nu_j&=&\lim_{\rho\to0}\eta(v_n)\left(\int_{\mathbb{R}^N}|v|^{1^*}\varphi_\rho \, \mbox{d}x+\sum_{j\in J_\nu}\int_{\mathbb{R}^N}\nu_j\delta_{x_j}\varphi_\rho \, \mbox{d}x\right)\\
&=&\lim_{\rho\to0}\eta(v_n)\int_{\mathbb{R}^N}\varphi_\rho \, \mbox{d}\nu
=\lim_{\rho\to0}\liminf_{n\to\infty}\eta_n(v_n)\int_{\mathbb{R}^N}|v_n|^{1^*}\varphi_\rho \, \mbox{d}x\\
&=&\lim_{\rho\to0}\liminf_{n\to\infty}\int_{\mathbb{R}^N}|Dv_n|\varphi_\rho
\geq\lim_{\rho\to0}\liminf_{n\to\infty}S\left(\int_{B_{\rho/2}(x_j)}|v_n|^{1^*} \, \mbox{d}x\right)^{1/1^*}\\
&\geq&S\nu_j^{1/1^*}.
\end{eqnarray*}
Thus $\nu_j=0$ or $\nu_j\geq (S/\eta(v_n))^N$. We claim that the latter is impossible. Indeed, note that $I(v_n)\to c\in(0,\frac{1}{N}S^N)$ and $P(v_n)\to0$ as $n\to\infty$,
by computing $I(v_n)-P(v_n)$ and using (\ref{ineq:f-F-1}), we derive that
\[
\frac{1}{N}S^N>c>\left(1-\frac{1}{1^*}\right)\int_{\mathbb{R}^N}\varphi_\rho \, \mbox{d}\nu
\geq\left(1-\frac{1}{1^*}\right)\nu_j\geq\frac{1}{N}\left(\frac{S}{\eta(v_n)}\right)^N\geq\frac{1}{N}S^N.
\]
Here the last inequality holds due to $\eta(v_n)\leq1$.
This contradiction proves that $\nu_j=0$, $\forall j\in J_\nu$, and hence $J_\nu=\emptyset$. Consequently, we obtain that
\[
\lim_{n\to\infty}\int_{\mathbb{R}^N}|v_n|^{1^*}\varphi \, \mbox{d}x=\int_{\mathbb{R}^N}|v|^{1^*}\varphi \, \mbox{d}x,\quad\forall\varphi\in C_0(\mathbb{R}^N).
\]
Note that for $N\geq2$, $1<1^*<+\infty$. 
Moreover, $v_n\to v$ weakly in $L^{1^*}(\mathbb{R}^N)$, then the above limit implies $v_n\to v$ strongly in $L_{loc}^{1^*}(\mathbb{R}^N)$.
\end{proof}

\begin{prop}\label{prop:4-7}
  Assume that ($f_1$)-($f_5$) hold. Let $\{v_n\}\subset S_m$ be the bounded sequence for $I|_{S_m}$  obtained in Proposition \ref{prop:4-5} at level $\gamma_m\in(0,\frac{1}{N}S^N)$. Then $k_n\leq1$ provided that $\gamma_m$ is sufficiently small, where $k_n=\langle\mathcal{F}'(v_n),v_n\rangle$.
\end{prop}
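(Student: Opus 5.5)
The plan is to bound $k_n=\int_{\mathbb{R}^N}f(v_n)v_n\,\mathrm{d}x+\int_{\mathbb{R}^N}|v_n|^{1^*}\,\mathrm{d}x$ by a constant multiple of $\gamma_m+o(1)$, using the two pieces of information on $\{v_n\}$ from Proposition \ref{prop:4-5}: $I(v_n)\to\gamma_m$ and $P(v_n)\to0$. The upper bound in ($f_5$) then lets us compare $f(t)t$ with $F(t)$, so $k_n$ is controlled by the combination $I(v_n)-P(v_n)$.

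First, subtract $P(v_n)$ from $I(v_n)$, as in \eqref{prop:4-5-4-I-P}. The total variation terms cancel and we obtain
\[
\int_{\mathbb{R}^N}\Big(Nf(v_n)v_n-(N+1)F(v_n)\Big)\mathrm{d}x+\frac{1}{N}\int_{\mathbb{R}^N}|v_n|^{1^*}\mathrm{d}x=\gamma_m+o(1).
\]
By ($f_3$) we have $F(t)\le f(t)t/\theta$ with $\theta>1+\frac1N$. Hence
\[
Nf(t)t-(N+1)F(t)\ge \Big(N-\frac{N+1}{\theta}\Big)f(t)t=:c_\theta f(t)t ,\qquad c_\theta>0 .
\]
Therefore
\[
c_\theta\int_{\mathbb{R}^N}f(v_n)v_n\,\mathrm{d}x+\frac1N\int_{\mathbb{R}^N}|v_n|^{1^*}\mathrm{d}x\le\gamma_m+o(1),
\]
and so $k_n\le \max\{c_\theta^{-1},N\}(\gamma_m+o(1))$. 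Here ($f_5$) is only needed to keep the constant uniform; ($f_3$) already gives the sign.

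Consequently, once $\gamma_m<\big(2\max\{c_\theta^{-1},N\}\big)^{-1}$, we get $k_n\le1$ for all large $n$. Passing to a tail of the sequence (which leaves the conclusions of Proposition \ref{prop:4-5} unchanged), this holds for all $n$.

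The main obstacle is interpreting ``$\gamma_m$ sufficiently small''. The smallness threshold must be independent of $m$ and of $n$, which the computation above does give, since $c_\theta$ depends only on $N$ and $\theta$. I would state it as the explicit condition $\gamma_m<\min\{\frac1N S^N,(2\max\{c_\theta^{-1},N\})^{-1}\}$. That this condition can actually be achieved (presumably for $m\ge m_0$, using ($f_6$)) belongs to the later analysis of $m\mapsto\gamma_m$ and is not part of this proposition.
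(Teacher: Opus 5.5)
Your argument is correct and follows the paper's proof. Both use $I(v_n)-P(v_n)=\gamma_m+o(1)$, the nonnegativity under ($f_3$) of the two integrals this produces, and smallness of $\gamma_m$ to get $k_n\le 1$ for large $n$. The only difference is in how $\int_{\mathbb{R}^N}f(v_n)v_n\,\mathrm{d}x$ is bounded: you do it directly via $F(t)\le f(t)t/\theta$, whereas the paper first bounds $\int_{\mathbb{R}^N}F(v_n)\,\mathrm{d}x$ by $(\gamma_m+o(1))/(N\theta-N-1)$ and then converts with ($f_5$). So your version does not use ($f_5$) at all, and your remark that ($f_5$) is ``needed to keep the constant uniform'' should simply be dropped.
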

\begin{proof}
  Let $\{v_n\}\subset S_m$ be the sequence obtained in Proposition \ref{prop:4-5}. We have
\begin{eqnarray*}\label{prop:4-7-1}
I(v_n)=\int_{\mathbb{R}^N}|Dv_n|-\int_{\mathbb{R}^N}F(v_n) \, \mbox{d}x
-\frac{1}{1^*}\int_{\mathbb{R}^N}|v_n|^{1^*} \, \mbox{d}x=\gamma_m+o(1)
\end{eqnarray*}
and
\begin{eqnarray*}\label{prop:4-7-2}
P(v_n)=\int_{\mathbb{R}^N}|Dv_n|-N\int_{\mathbb{R}^N}\Big(f(v_n)v_n-F(v_n)\Big) \, \mbox{d}x
-\int_{\mathbb{R}^N}|v_n|^{1^*} \, \mbox{d}x=o_n(1).
\end{eqnarray*}
Here and in the following, $o_n(1)$ denotes a quantity that tends to zero as $n$ tends to infinity.
Note that by ($f_3$),
\begin{eqnarray*}\label{prop:4-7-3}
I(v_n)-P(v_n)&=&N\int_{\mathbb{R}^N}\Big(f(v_n(x))v_n(x)-\Big(1+\frac{1}{N}\Big)F(v_n(x))\Big)
+\frac{1}{N}\int_{\mathbb{R}^N}|v_n(x)|^{1^*}\\
&\geq&N\left(\theta-\Big(1+\frac{1}{N}\Big)\right)\int_{\mathbb{R}^N}F(v_n(x)) \, \mbox{d}x
+\frac{1}{N}\int_{\mathbb{R}^N}|v_n(x)|^{1^*} \, \mbox{d}x.
\end{eqnarray*}
This shows that if $\gamma_m$ is small enough, we have
\[
\int_{\mathbb{R}^N}|v_n(x)|^{1^*}\leq{N}\gamma_m+o(1)+o_n(1)\leq\frac{1}{2}
\]
and by ($f_5$),
\[
\int_{\mathbb{R}^N}f(v_n(x))v_n \, \mbox{d}x\leq1^*\int_{\mathbb{R}^N}F(v_n(x)) \, \mbox{d}x
\leq\frac{1^*}{N\theta-N-1}\gamma_m+o(1)+o_n(1)\leq\frac{1}{2}.
\]
This proves $k_n\leq1$.
\end{proof}

With the locally strong convergence in $L^{1^*}(\mathbb{R}^N)$, we are able to recover the compactness of the sequence in $L^{r}(\mathbb{R}^N)$, $1\leq r\leq 1^*$. We remark that the approximating variational inequality (\ref{cor:4-6-1}) is of great help. 

{
\begin{prop}\label{prop:4-8}
  Assume that ($f_1$)-($f_5$) hold. Suppose further that $\gamma_m\in(0,\frac{1}{N}S^N)$. Then the bounded sequence $\{v_n\}\subset S_m$ obtained in Proposition \ref{prop:4-5} is compact in $L^r(\mathbb{R}^N)$, $\forall r\in[1,1^*]$, provided that $\gamma_m$ is sufficiently small.
\end{prop}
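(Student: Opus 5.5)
We first set up the local compactness. By Proposition \ref{prop:4-7}, for $\gamma_m$ small we have $k_n\le 1$. Since $\{v_n\}$ is bounded in $BV(\mathbb{R}^N)$, $\int|Dv_n|$ is bounded, so along a subsequence $\eta_n(v_n)$ stays controlled.

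The intended hypothesis is $\eta(v_n)\le1$. To obtain it, we would compare $\int|Dv_n|$ with $k_n$ using $P(v_n)\to0$. Indeed,
\[
\int|Dv_n| = N\int(f(v_n)v_n-F(v_n)) + \int|v_n|^{1^*} + o(1).
\]
Combined with ($f_5$), this gives $N(f(t)t-F(t))<(N-1)f(t)t\cdot\frac{N}{N-1}\cdots$, and we would try to show $\int|Dv_n|\le k_n+o(1)$. If this route fails, we would fall back on inequality \eqref{cor:4-6-1} with $\varphi\equiv\pm1$; this gives $\int|Dv_n|$ versus $\eta_n$ only trivially, so the comparison via ($f_5$) is the key point.

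Once $\eta(v_n)\le1$ holds, Proposition \ref{prop:loc-compact} applies at level $c=\gamma_m\in(0,\frac1N S^N)$. After translations, it gives $v\in BV(\mathbb{R}^N)$ with $v_n\to v$ in $L^{1^*}_{loc}$, and by interpolation also in $L^r_{loc}$ for $r\in[1,1^*]$.

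Nonvanishing comes first. Translations are chosen via Lions' lemma: if $\sup_y\int_{B_1(y)}|v_n|\to0$, then $v_n\to0$ in $L^r$ for $1<r<1^*$. We then use $I(v_n)-P(v_n)\ge c_0\int F(v_n)+\frac1N\int|v_n|^{1^*}$ and the lower bound $\gamma_m>0$. Vanishing would force $\int F(v_n)\to0$ and, via $P(v_n)\to0$ and Sobolev, $\int|Dv_n|\le S^{-1^*}\|Dv_n\|_1^{1^*}+o(1)$. Hence either $\|Dv_n\|_1\to0$, which gives $I(v_n)\to0$ and contradicts $\gamma_m>0$, or $\|Dv_n\|_1\ge S^{N}$, which gives $I\ge\frac1N S^N$ and is also a contradiction. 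So $v\ne0$.

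The main obstacle is tightness, i.e.\ no mass escaping to infinity, which is needed to upgrade to global $L^1$ convergence. Here we use \eqref{cor:4-6-1} with the cutoff $\varphi=1-\psi_R$, where $\psi_R=1$ on $B_R$, $\psi_R=0$ off $B_{2R}$, and $|\nabla\psi_R|\le 2/R$. Applying it with both signs as in \eqref{prop:loc-compact-4} gives
\[
\Big|\int(1-\psi_R)|Dv_n|-\eta_n\int(1-\psi_R)(f(v_n)v_n+|v_n|^{1^*})\Big|\le \frac{2}{R}m+\frac{8}{\sqrt n}.
\]
On the exterior region, the $f$ and $|v_n|^{1^*}$ terms are superlinear in $\|(1-\psi_R)v_n\|$. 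By Gagliardo–Nirenberg \eqref{ineq:G-N} and smallness (coming from $\gamma_m$ small via $k_n\le1$), they are bounded by a small multiple of $\int|D((1-\psi_R)v_n)|$. Hence $\int_{|x|>2R}|Dv_n|\to0$ uniformly as $R\to\infty$. We must then also control $\int_{|x|>2R}|v_n|$. The $L^1$ norm is not controlled by the gradient in the exterior, so we would argue by dichotomy. If part of the mass escapes, splitting $v_n=v+w_n$ with the Brezis–Lieb lemma, the strict subadditivity of $\gamma_m$ in $m$ (which follows from the scaling and ($f_4$)) excludes loss of mass. Thus $\|v\|_1=m$, so $v_n\to v$ in $L^1$. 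Interpolating with the bounded $L^{1^*}$ norm and the local $L^{1^*}$ convergence (which rules out concentration) gives convergence in all $L^r$, $r\in[1,1^*]$; at $r=1^*$ we use the tail bound together with Sobolev.
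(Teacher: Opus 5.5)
Your preliminary steps are sound. Stated cleanly, the first is
$\|Dv_n\|_1-k_n=P(v_n)+(N-1)\int_{\mathbb{R}^N}\big(f(v_n)v_n-1^*F(v_n)\big)\,\mbox{d}x\le o_n(1)$ by ($f_5$). Together with $\liminf k_n>0$ (from $\gamma_m>0$ and $P(v_n)\to0$), this gives $\eta(v_n)\le1$, exactly as in \eqref{prop:4-8-10}. Your exclusion of vanishing is also correct: $P(v_n)\to0$ forces $\|Dv_n\|_1\to0$ or $\liminf\|Dv_n\|_1\ge S^N$, and both contradict $0<\gamma_m<\frac1N S^N$. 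This is more careful than the paper's appeal to $\|v_n\|_1=m$, since Lions-vanishing does not force $\|v_n\|_1\to0$.

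The gap is the tightness step, which is the real content of the proposition. You bound $\eta_n(v_n)\int_{\mathbb{R}^N}(1-\psi_R)\big(f(v_n)v_n+|v_n|^{1^*}\big)\,\mbox{d}x$ by a small multiple of the exterior total variation. But that bound uses only \eqref{lem:2-3-1}, \eqref{ineq:G-N}, Sobolev and global smallness, nothing that distinguishes $\{|x|>R\}$ from $\mathbb{R}^N$. Run with $\varphi\equiv1$, the same reasoning gives $\|Dv_n\|_1=\eta_n(v_n)k_n\le(\frac12+o_n(1))\|Dv_n\|_1$, hence $\|Dv_n\|_1\to0$, contradicting your own nonvanishing step. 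Concretely, $\big(C\varepsilon m^{1/N}+C_\varepsilon S^{-1^*}\|Dw\|_1^{1^*-1}\big)\|Dw\|_1$ is a small multiple of $\|Dw\|_1$ only when $\|Dw\|_1$ lies below a threshold $\delta(m)$ as in the proof of Lemma \ref{lem:4-1}. However, \eqref{cor:2-4} together with $P(v_n)\to0$ forces $\|Dv_n\|_1>\delta(m)$, whatever $m$ is. Making $\gamma_m$ small by taking $m$ large does not help, since $\delta(m)\to0$ as well. So ``the exterior piece is below threshold'' is precisely the no-dichotomy statement you must prove: a second bump drifting to infinity satisfies the localized identity from \eqref{cor:4-6-1} with factor $\approx1$, not a small one.

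The mass part does not close either. The dilation $\tau*$ preserves $\|\cdot\|_1$, so it yields no comparison between different masses. What is available is $E_\mu>0$, $m\mapsto E_m$ nonincreasing, and $\gamma_m\le E_m$ (Lemmas \ref{lem:2-7}, \ref{lem:2-10}, \ref{lem:2-14}). With these, strict subadditivity $E_m\le E_\mu<E_\mu+E_{m-\mu}$ is automatic, which shows it is not what prevents mass loss. Indeed, $\gamma_m$ is a mountain-pass level ($\inf_{S_m}I=-\infty$ by Lemma \ref{lem:2-5}), and under an $L^1$ constraint mass can leave at zero energy cost: $\tau*w$ with $\tau\to-\infty$ keeps its $L^1$ norm while $\|D(\tau*w)\|_1,\|\tau*w\|_{1^*}\to0$. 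Even granting your gradient-tail bound, the escaping part would be exactly of this type. Then $I(w_n),P(w_n)\to0$, so $P(v)=0$ and $\gamma_m=I(v)\ge E_{\|v\|_1}\ge E_m\ge\gamma_m$, which is no contradiction. To conclude you would need the strict decrease $E_\mu>E_m$ for $\mu<m$, or a Lagrange multiplier $\lambda>0$ for the weak limit as in Jeanjean--Lu, and neither is available here. Brezis--Lieb also does not split $\int|Dv_n|$ under $L^1_{loc}$ convergence, so any splitting must be done with cutoffs.

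The paper takes a different route: concentration--compactness for $\rho_n$, with dichotomy excluded by adding \eqref{prop:4-8-10} and \eqref{prop:4-8-13} and using $k_n\le1$. After inserting \eqref{prop:4-8-15}, however, that combination reduces to $-\frac{N-1}{k_n}\int_{\mathbb{R}^N}\big(f(v_n)v_n-1^*F(v_n)\big)\,\mbox{d}x=1-\eta_n(v_n)+o_n(1)$, which is \eqref{prop:4-8-10} itself. So the paper does not supply the missing ingredient either.
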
}
\begin{proof}
Let $\{v_n\}\subset S_m$ be the sequence obtained in Proposition \ref{prop:4-5}. 
We define a sequence of nonnegative functions $\rho_n: S_m\to\mathbb{R}$ by
\begin{eqnarray*}\label{prop:4-8-3}
  \rho_n(x) &=& |v_n(x)|+ N\Big(f(v_n(x))v_n(x)-\Big(1+\frac{1}{N}\Big)F(v_n(x))\Big)
+\frac{1}{N}|v_n(x)|^{1^*}.
\end{eqnarray*}
We have 
\begin{eqnarray}\label{prop:4-8-4}
\int_{\mathbb{R}^N}\rho_n(x) \, \mbox{d}x=J_2(v_n)+I(v_n)-P(v_n)=m+\gamma_m+o(1)+o_n(1),
\end{eqnarray} 
where $J_2(v_n)=\int_{\mathbb{R}^N}|v_n| \, \mbox{d}x$. 
Since $\{\rho_n\}$ is a nonnegative bounded sequence in $L^1(\mathbb{R}^N)$, according to the Concentration Compactness Principle, there exists a subsequence, still denoted by itself, such that exactly one of the following statements holds.

(i) Vanishing. For any fixed $R>0$, 
\[
\lim_{n\to\infty}\sup_{y\in\mathbb{R}^N}\int_{B_R(y)}\rho_n(x) \, \mbox{d}x\to0.
\]

(ii) Compactness. There exists $\{y_n\}\subset\mathbb{R}^N$ such that $\forall \varepsilon>0$, $\exists R>0$ such that
\[
\int_{B_R(y_n)}\rho_n(x) \, \mbox{d}x\geq m+\gamma_m-\varepsilon.
\]

(iii) Dichotomy. There exist $\{y_n\}\subset\mathbb{R}^N$, $\alpha\in(0,m+\gamma_m)$, $R>0$, $R_n\to+\infty$ such that the functions 
\[
\rho_n^{(1)}(x)=\xi^{(1)}\Big(\frac{x-y_n}{R}\Big)\rho_n(x)\quad\mbox{and}\quad
\rho_n^{(2)}(x)=\xi^{(2)}\Big(\frac{x-y_n}{R_n}\Big)\rho_n(x)
\]
satisfy
\[
\int_{\mathbb{R}^N}\rho_n^{(1)}(x) \, \mbox{d}x\to\alpha\quad\mbox{and}\quad
\int_{\mathbb{R}^N}\rho_n^{(2)}(x) \, \mbox{d}x\to m+\gamma_m-\alpha,
\]
where $\xi^{(1)}$, $\xi^{(2)}$ are smooth cutoff functions satisfying that $\xi^{(1)}(x)=1$, $\xi^{(2)}(x)=0$ for $|x|\leq1$, $\xi^{(1)}(x)=0$, $\xi^{(2)}(x)=1$ for $|x|\geq2$ and $0\leq\xi^{(1)}(x), \xi^{(2)}(x)\leq1$.

We show that only the Compactness case occurs by excluding the other two possibilities.
Clearly, the Vanishing case cannot occur because $J_2(v_n)= m$, $\forall n\in\mathbb{N}$; otherwise, we would have
$v_n\to0$ in $L^{r}(\mathbb{R}^N)$ for all $1\leq r\leq 1^*$ and then $J_2(v_n)\to0$, a contradiction.
In view of this, we only need to deal with the Dichotomy case.
Since the problem is translation invariant, we may assume that $y_n=0$, $\forall n\in\mathbb{N}$; 
otherwise, one needs to prove that $\{y_n\}$ is a bounded sequence.

Now assume that Dichotomy occurs. Intuitively, $\rho_n$ is divided into two parts. 
Let
\[
v_n^{(1)}(x)=\xi^{(1)}\Big(\frac{x}{R}\Big)v_n(x)\quad\mbox{and}\quad
v_n^{(2)}(x)=\xi^{(2)}\Big(\frac{x}{R_n}\Big)v_n(x)
\]
and define $II:=I-P$. 
Then there exits $R>0$ sufficiently large such that 
\begin{eqnarray}\label{prop:4-8-5}
&& |II(v_n)-II(v_n^{(1)})-II(v_n^{(2)})| \nonumber\\
&\leq& C\int_{R\leq|x|\leq 2R_n}\left\{N\Big(f(v_n(x))v_n(x)-\Big(1+\frac{1}{N}\Big)F(v_n(x))\Big)
+\frac{1}{N}|v_n(x)|^{1^*}\right\} \, \mbox{d}x \nonumber\\
&\leq& Co_n(1)
\end{eqnarray}
and similarly,
\begin{eqnarray}\label{prop:4-8-6}
|J_2(v_n)-J_2(v_n^{(1)})-J_2(v_n^{(2)})|\leq Co_n(1),
\end{eqnarray}
where $C>0$ is a constant.

Note that $P(v_n)=o_n(1)$ can be written in the following form
\begin{eqnarray*}\label{prop:4-8-9}
  P(v_n)=\int_{\mathbb{R}^N}|Dv_n|-(N-1)\int_{\mathbb{R}^N}f(v_n)v_n \, \mbox{d}x+N\int_{\mathbb{R}^N}F(v_n) \, \mbox{d}x-\langle{\mathcal{F}}'(v_n),v_n\rangle=o_n(1).
\end{eqnarray*}
Dividing the above equality by $k_n=\langle{\mathcal{F}}'(v_n),v_n\rangle$, we get
\begin{eqnarray}\label{prop:4-8-10}
  \frac{1}{k_n}\int_{\mathbb{R}^N}|Dv_n|-\frac{(N-1)}{k_n}\int_{\mathbb{R}^N}\Big(f(v_n)v_n-1^*F(v_n)\Big) \, \mbox{d}x-1=o_n(1)
\end{eqnarray}
or equivalently,
\begin{eqnarray*}\label{prop:4-8-11}
\eta_n(v_n)=1+\frac{(N-1)}{k_n}\int_{\mathbb{R}^N}\Big(f(v_n)v_n-1^*F(v_n)\Big) \, \mbox{d}x+o_n(1).
\end{eqnarray*}
Taking $\liminf$ on both sides of the above equality, we get $\eta(v_n)\leq1$ by ($f_5$). Then according to Proposition \ref{prop:loc-compact}, we have $v_n\to v$ in $L^{1^*}_{loc}(\mathbb{R}^N)$.

For simplicity, we denote $\xi^{{1}}_R(\cdot)=\xi^{{1}}(\cdot/R)$.
Note that $\langle{\mathcal{F}}'(v_n),v_n\rangle$ also dichotomizes. We may assume that there exists $\alpha_1\in[0,1]$ such that $\frac{1}{k_n}\langle{\mathcal{F}}'(v_n),\xi^{(1)}_Rv_n\rangle\to\alpha_1$ and $\frac{1}{k_n}\langle{\mathcal{F}}'(v_n),(1-\xi^{(1)}_R)v_n\rangle\to1-\alpha_1$.

Now we choose $\varphi=\xi^{(1)}_R$ as a test function in (\ref{cor:4-6-1}) and obtain
\begin{eqnarray*}\label{prop:4-8-12}
\int_{\mathbb{R}^N}\xi^{(1)}_R|Dv_n|\geq\frac{1}{k_n}\langle{\mathcal{F}}'(v_n),\xi^{(1)}_Rv_n\rangle\int_{\mathbb{R}^N}|Dv_n|-\frac{2}{R}\int_{\{R\leq|x|\leq2R\}}|v_n| \, \mbox{d}x -\frac{8}{\sqrt{n}}. 
\end{eqnarray*}
Similarly, if we choose $\varphi=-\xi^{(1)}_R$ in \eqref{cor:4-6-1}, we get a similar reverse inequality. Consequently, we obtain
\begin{eqnarray}\label{prop:4-8-13}
\int_{\mathbb{R}^N}\xi^{(1)}_R|Dv_n|=\frac{1}{k_n}\langle{\mathcal{F}}'(v_n),\xi^{(1)}_Rv_n\rangle\int_{\mathbb{R}^N}|Dv_n|+o_n(1). 
\end{eqnarray}
This implies that 
\begin{eqnarray}\label{prop:4-8-14}
\frac{1}{\|Dv_n\|_1}\int_{\mathbb{R}^N}\xi^{(1)}_R|Dv_n|=\frac{1}{k_n}\langle{\mathcal{F}}'(v_n),\xi^{(1)}_Rv_n\rangle+o_n(1)\to\alpha_1 
\end{eqnarray}
as $n$ tends to infinity. 
Replacing $\xi^{(1)}_R$ by $(1-\xi^{(1)}_R)$, we get a similar result
\begin{eqnarray}\label{prop:4-8-15}
\frac{1}{\|Dv_n\|_1}\int_{\mathbb{R}^N}(1-\xi^{(1)}_R)|Dv_n|=\frac{1}{k_n}\langle{\mathcal{F}}'(v_n),(1-\xi^{(1)}_R)v_n\rangle+o_n(1)\to 1-\alpha_1.
\end{eqnarray}
The above two equalities show that $\|Dv_n\|_1$ dichotomizes at the same ratio as $\langle{\mathcal{F}}'(v_n),v_n\rangle$.

We claim that $0<\alpha_1<1$. Indeed, by the embedding inequality, we have 
\[
\int_{\mathbb{R}^N}\xi^{(1)}_R|Dv_n|\geq\int_{B_R(0)}|Dv_n|\geq C\int_{B_R(0)}|v_n| \, \mbox{d}x.
\]
Thus $\alpha_1=0$ would imply $\alpha=0$, which is a contradiction.

Adding up (\ref{prop:4-8-10}) and (\ref{prop:4-8-13}), we get
\begin{eqnarray*}\label{prop:4-8-16}
&&\frac{1}{k_n}\int_{\mathbb{R}^N}|Dv_n|+\int_{\mathbb{R}^N}\xi^{(1)}_R|Dv_n|-\frac{1}{k_n}\langle{\mathcal{F}}'(v_n),\xi^{(1)}_Rv_n\rangle\int_{\mathbb{R}^N}|Dv_n| \nonumber\\
&&\quad -\frac{(N-1)}{k_n}\int_{\mathbb{R}^N}\Big(f(v_n)v_n-1^*F(v_n)\Big) \, \mbox{d}x-1=o_n(1).
\end{eqnarray*}
Then by ($f_5$), we have
\begin{eqnarray*}\label{prop:4-8-17}
0 &\leq& -\frac{(N-1)}{k_n}\int_{\mathbb{R}^N}\Big(f(v_n)v_n-1^*F(v_n)\Big) \, \mbox{d}x \nonumber\\
&\leq& 1-\int_{\mathbb{R}^N}\xi^{(1)}_R|Dv_n|
-\frac{1}{k_n}\langle{\mathcal{F}}'(v_n),(1-\xi^{(1)}_R)v_n\rangle\int_{\mathbb{R}^N}|Dv_n|
+\left(1-\frac{1}{k_n}\right)\int_{\mathbb{R}^N}|Dv_n|+o_n(1).
\end{eqnarray*}
Note that by Proposition \ref{prop:4-7}, we have $k_n\leq1$ for $\gamma_m>0$ sufficiently small.
Thus by \eqref{prop:4-8-14}-\eqref{prop:4-8-15} we obtain
\begin{eqnarray*}\label{prop:4-8-18}
 -\frac{(N-1)}{k_n}\int_{\mathbb{R}^N}\Big(f(v_n)v_n-1^*F(v_n)\Big) \, \mbox{d}x =0
\end{eqnarray*}
Furthermore, by Fatou's lemma,
\begin{eqnarray*}\label{prop:4-8-19}
0\leq -\frac{(N-1)}{k_n}\int_{\mathbb{R}^N}\Big(f(v_0)v_0-1^*F(v_0)\Big) \, \mbox{d}x
\leq -\frac{(N-1)}{k_n}\int_{\mathbb{R}^N}\Big(f(v_n)v_n-1^*F(v_n)\Big) \, \mbox{d}x =0.
\end{eqnarray*}
This implies that $v_0=0$, a contradiction. Thus the Dichotomy case cannot occur.
This completes the proof.
\end{proof}

\begin{rem}
For $a>0$, we define 
\begin{eqnarray*}\label{prop:4-8-7}
II_a=\inf\{II(v):v\in BV(\mathbb{R}^N), J_2(v)=a\}.
\end{eqnarray*}
Then according to (\ref{prop:4-8-5}), we have
\begin{eqnarray}\label{prop:4-8-8}
II_m+\varepsilon\geq II(v_n) &\geq& II(v_n^{(1)})+II(v_n^{(2)})-C\varepsilon \nonumber\\
&\geq& II_{m_n^{(1)}}+II_{m_n^{(2)}}-C\varepsilon \nonumber\\
&\to& {II_{m^{(1)}}+II_{m^{(2)}}-C\varepsilon,}
\end{eqnarray}
where 
\[
m_n^{(i)}:=J_2(v_n^{(i)}), \quad m^{(i)}:=\lim_{n\to\infty}m_n^{(i)},\quad i=1,2.
\]
It is obvious that $0\leq m^{(1)}, m^{(2)}\leq m$ and by (\ref{prop:4-8-6}), $m^{(1)}+m^{(2)}=m$.
We claim that $0< m^{(1)}, m^{(2)}< m$. In fact, if $m^{(1)}=0$, then $m^{(2)}=m$. Therefore, for $n$ sufficiently large,
\begin{eqnarray*}
  \alpha-\varepsilon < \int_{\mathbb{R}^N}\rho_n^{(1)}(x) \, \mbox{d}x\leq m_n^{(1)}+II(v_n^{(1)})+C\varepsilon.
\end{eqnarray*}
This shows that there exists $\delta>0$ such that
\[
II(v_n^{(1)})\geq\delta.
\]
But then from (\ref{prop:4-8-8}), we have
\[
II_m+\varepsilon\geq \delta+II_{m}-C\varepsilon.
\]
This gives a contradiction, and hence $m^{(1)}>0$; similarly, $m^{(2)}>0$. 
In conclusion, if Dichotomy occurs, then there exist $m^{(1)}, m^{(2)}\in(0,m)$ with $m^{(1)}+m^{(2)}=m$ such that
\[
II_{m}\geq II_{m^{(1)}}+II_{m^{(2)}}.
\]
For $f(u)=|u|^{q-2}u$, $1+\frac{1}{N}<q<1^*$, it is possible to adapt the methods in \cite{Soa201} to prove the reverse strict inequality
\[
II_{m}< II_{m^{(1)}}+II_{m^{(2)}}
\]
to obtain a contradiction and then recover the compactness of the sequence $\{v_n\}$.
For this special case, we can use the approach as in the proof of Theorem 1.1 of \cite{LLJ12} to show that $\gamma_m<\frac{1}{N}S^N$ for any $m>0$,
this means that we can drop the assumption "$m>0$ is sufficiently large" in Theorem \ref{thm:m3}.
However, for a general perturbation $f$, more monotonicity assumptions are needed.
\end{rem}

After recovering the compactness of the minimizing sequence, we can obtain a minimizer of the functional under the constraint. We show that this minimizer not only belongs to the constrained manifold, but also belongs to the Pohozaev manifold.

\begin{prop}\label{prop:4-9}
  Assume that ($f_1$)-($f_5$) hold and that $\gamma_m\in(0,\frac{1}{N}S^N)$ is sufficiently small. Then there exist
  a $v_0\in S_m$ with $P(v_0)=0$,
  a $z\in L^{\infty}(\mathbb{R}^N,\mathbb{R}^N)$ satisfying $\|z\|_\infty\leq1$, $(z,Dv_0)=|Dv_0|$, $-\mbox{div}z\in L_{\infty,N}(\mathbb{R}^N)$, 
  a $\gamma\in L^\infty(\mathbb{R}^N)$ with $\|\gamma\|_\infty\leq1$, $\gamma v_0=|v_0|$ a.e. in $\mathbb{R}^N$
  and
  a $\lambda>0$ such that $I(v_0)=\gamma_m$ and
\begin{eqnarray}\label{prop:4-9-1}
-\mbox{div}z+\lambda\gamma=f(v_0)+|v_0|^{1^*-2}v_0,\quad\mbox{in}~L_{\infty,N}(\mathbb{R}^N).
\end{eqnarray}
\end{prop}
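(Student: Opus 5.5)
We start from the bounded sequence $\{v_n\}\subset S_m$ of Proposition \ref{prop:4-5}, with $I(v_n)\to\gamma_m$, $P(v_n)\to0$ and the approximate variational inequality \eqref{prop:4-5-2}.

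\textbf{Step 1: compactness and the limit $v_0$.} By Proposition \ref{prop:4-8}, after translations and passing to a subsequence, $v_n\to v_0$ in $L^r(\mathbb{R}^N)$ for every $r\in[1,1^*]$. Hence $\|v_0\|_1=m$, so $v_0\in S_m$. By lower semicontinuity of the total variation, $J_1(v_0)\le\liminf J_1(v_n)$. Since $\mathcal{F}$ and $\int(f(u)u-F(u))$ are continuous on $L^1\cap L^{1^*}$ (by \eqref{lem:2-3-1} and dominated convergence), we get $I(v_0)\le\gamma_m$ and $P(v_0)\le0$.

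\textbf{Step 2: pass to the limit in the variational inequality.} Fix $h\in BV$ and set $h_n=h-\frac{\langle\mathcal{F}'(v_n),h\rangle}{k_n}v_n\in T_{v_n}$. Here $k_n\to k_0=\langle\mathcal{F}'(v_0),v_0\rangle>0$, with $k_0>0$ because $v_0\ne0$ and $f(t)t>0$. Convexity of $J_1$ turns \eqref{prop:4-5-2} into
\[
J_1(v_n+\kappa h_n)-J_1(v_n)\ge\kappa\langle\mathcal{F}'(v_n),h_n\rangle-C\kappa n^{-1/2}.
\]
Write $J_1(v_n+\kappa h_n)\le J_1((1-\kappa c_n)v_n+\kappa h)$ with $c_n=\langle\mathcal{F}'(v_n),h\rangle/k_n$. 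For $\kappa c_n<1$ this is at most $(1-\kappa c_n)J_1(v_n)+\kappa J_1(h)$, which gives
\[
\kappa J_1(h)-\kappa c_nJ_1(v_n)\ge -C\kappa n^{-1/2}.
\]
Let $n\to\infty$, with $J_1(v_n)\to L:=\liminf J_1(v_n)$ along a subsequence. Setting $\lambda'=L/k_0$, we obtain
\[
J_1(h)\ge \lambda'\langle\mathcal{F}'(v_0),h\rangle\qquad\text{for all }h\in BV.
\]
Taking $h=v_0$ and using lower semicontinuity yields $J_1(v_0)\ge L$, hence $J_1(v_0)=L$, and $J_1(v_n)\to J_1(v_0)$ strictly. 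Consequently $I(v_0)=\gamma_m$ and $P(v_0)=0$. Since $J_1$ is $1$-homogeneous, the inequality above says $\lambda'\mathcal{F}'(v_0)\in\partial J_1(v_0)$, because $J_1(h)-J_1(v_0)\ge\lambda'\langle\mathcal{F}'(v_0),h-v_0\rangle$. We also need $\lambda'=\eta(v_n)>0$; this holds because $L>0$ for a nonconstant $L^1$ function.

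\textbf{Step 3: recover the Lagrange multiplier $\lambda$ and the equation.} This is the main obstacle. The subgradient obtained in Step 2 carries a factor $\lambda'$ in front of $\mathcal{F}'$, and the $L^1$-constraint multiplier is still missing.
\begin{itemize}
\item \emph{Route via $\lambda'=1$.} If we show $\lambda'=1$, we get $\mathcal{F}'(v_0)\in\partial J_1(v_0)$, but then $\lambda$ would be $0$. This is not consistent with the statement.
\item \emph{Intended route: include the constraint.} The better route uses the decomposition $\partial\overline{J}=\partial\overline{J}_1+\lambda\partial\overline{J}_2$ from Section 2. The goal is $\mathcal{F}'(v_0)-\lambda\gamma\in\partial J_1(v_0)$ with $\gamma\in\partial J_2(v_0)$. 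To get it, I would rework Step 2 with test directions that vary the $L^1$ norm.
\begin{itemize}
\item The inequality restricted to $T_{v_0}$ already gives $\mathcal{F}'(v_0)$ up to the one-dimensional correction along $v_0$.
\item A Hahn--Banach/separation argument on $X$ then produces an element $u^*\in\partial\overline{J}_1(v_0)$ and a scalar $\lambda$ with $\mathcal{F}'(v_0)-u^*\in\lambda\,\partial\overline{J}_2(v_0)$, the latter represented by some $\gamma$ with $\gamma v_0=|v_0|$.
\item Testing with $v_0$ and using $J_1'(v_0)v_0=J_1(v_0)$ gives $\lambda m=k_0-J_1(v_0)$.
\item Positivity of $\lambda$ then follows from $P(v_0)=0$, ($f_5$) and \eqref{thm:Poho-6}-type manipulations, which give $k_0>J_1(v_0)$; indeed $P(v_0)=0$ rewrites as $J_1(v_0)-k_0=(N-1)\int(f(v_0)v_0-1^*F(v_0))<0$.
\end{itemize}
\end{itemize}
Finally, Proposition 4.23 of \cite{KaSc07}, as recalled before \eqref{eq:1-Lap-z}, converts $u^*$ into a vector field $z$ with $\|z\|_\infty\le1$, $(z,Dv_0)=|Dv_0|$ and $-\mathrm{div}\,z=u^*$, which yields \eqref{prop:4-9-1}.
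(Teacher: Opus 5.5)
\paragraph{What matches the paper.} Your Steps 1--2 are correct, and they contain the paper's argument for strict convergence. The paper inserts only $h=v_0-\frac{1}{k_n}\langle\mathcal{F}'(v_n),v_0\rangle v_n\in T_{v_n}$ into \eqref{prop:4-5-2} to get $J_1(v_0)\ge\limsup_n J_1(v_n)$. You test with all $h$ and additionally obtain $\lambda'\mathcal{F}'(v_0)\in\partial J_1(v_0)$ with $\lambda'=J_1(v_0)/k_0$. Your final sign computation for $\lambda$ is also the paper's.

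\paragraph{The gap is Step 3.} This step, the existence of $\lambda$ and $\gamma$, is the real content of the proposition, and your ``Hahn--Banach/separation argument'' has no input to work with.
\begin{itemize}
\item \eqref{prop:4-5-2} only controls $J_1$ along $T_{v_n}=\ker\mathcal{F}'(v_n)$. It is an approximate criticality condition for $J_1$ on a level set of $\mathcal{F}$, and $J_2$ never enters it.
\item Its limit is the relation you found, an Euler--Lagrange equation with the multiplier in front of the nonlinearity. That relation does not give $\mathcal{F}'(v_0)\in\partial J_1(v_0)+\lambda\,\partial J_2(v_0)$. Splitting $\mathcal{F}'(v_0)=\lambda'\mathcal{F}'(v_0)+(1-\lambda')\mathcal{F}'(v_0)$ would require $(1-\lambda')\big(f(v_0)+|v_0|^{1^*-2}v_0\big)=\lambda\gamma$ with $|\gamma|\le1$ and $\gamma v_0=|v_0|$. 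This fails unless $|f(v_0)+|v_0|^{1^*-2}v_0|$ is constant on $\{v_0\neq0\}$.
\item Your ``$\lambda'=1$'' route is excluded from the start, since $P(v_0)=0$ and ($f_5$) give $J_1(v_0)<k_0$, so $\lambda'<1$.
\item Re-testing with directions that change $\|\cdot\|_1$ cannot help. $T_{v_n}$ already contains such directions, and the inequality does not see the $L^1$ norm.
\end{itemize}

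\paragraph{The missing idea.} The $L^1$-multiplier has to come from a variational characterization of $v_0$ itself, not from the Palais--Smale inequality. This is what the paper does. Once $I(v_0)=\gamma_m$ and $P(v_0)=0$ are established, it treats $v_0$ as a constrained minimizer and applies the nonsmooth Lagrange multiplier rule \cite[Corollary 3.6]{DeSc98} (cf.\ \cite[Theorem 9.6.1]{ABM06}). This gives $v_0^*\in\partial J_1(v_0)$ and $\gamma_0^*\in\partial J_2(v_0)$ with $v_0^*+\lambda\gamma_0^*=\mathcal{F}'(v_0)$. It then uses \cite{KaSc07} to produce $z$ and $\gamma$.

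\paragraph{A caution if you take this route.} Justify the minimality carefully. The paper says ``minimizer of $I$ restricted on $S_m$'', which cannot hold literally: $I(\tau*u)\to-\infty$ inside $S_m$ by Lemma \ref{lem:2-5}(2). What is available is $v_0\in\mathcal{P}_m$ with $I(v_0)=\gamma_m\le E_m$ (Lemma \ref{lem:2-14}). So $v_0$ minimizes $I$ on $\mathcal{P}_m$, equivalently it minimizes $w\mapsto I(\tau(w)*w)$ on $S_m$ with $\tau(v_0)=0$. You would still need to show that the Pohozaev constraint contributes no additional multiplier, using Lemma \ref{lem:2-8} and a first-order estimate on $\tau(\cdot)$ near $v_0$. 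Only then does the multiplier rule on $S_m$ yield \eqref{prop:4-9-1}.
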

\begin{proof}
By Proposition \ref{prop:4-5}, there exists a bounded sequence $\{v_n\}\subset BV(\mathbb{R}^N)$ such that $I(v_n)\to\gamma_m$ and $P(v_n)\to0$; 
moreover, by Proposition \ref{prop:4-8}, there exists a $v_0\in BV(\mathbb{R}^N)$, $v_0\neq0$ such that $v_n\to v_0$ strongly in $L^r(\mathbb{R}^N)$, $\forall 1\leq r\leq 1^*$.
It is obvious that $v_0\in S_m$ and by Fatou Lemma
\[
\liminf_{n\to\infty}\int_{\mathbb{R}^N}|Dv_n|\geq\int_{\mathbb{R}^N}|Dv_0|.
\]
Let $h=v_0-\frac{1}{k_n}\langle\mathcal{F}'(v_n),v_0\rangle v_n$, where $k_n=\langle\mathcal{F}'(v_n),v_n\rangle$, then we have $h\in T_{v_n}$. Inserting $h$ into (\ref{prop:4-5-2}), we get
\[
J_1\big(v_n+\kappa(v_0-\frac{1}{k_n}\langle\mathcal{F}'(v_n),v_0\rangle v_n)\big)-J_1(v_n)\geq-\kappa\frac{8}{\sqrt{n}},
\]
where $k_n=\langle\mathcal{F}'(v_n),v_0\rangle$. Then for $\kappa>0$ small enough, we can argue as for (\ref{prop:loc-compact-2}) to get
\[
\int_{\mathbb{R}^N}|Dv_0|\geq\frac{1}{k_n}\langle\mathcal{F}'(v_n),v_0\rangle\int_{\mathbb{R}^N}|Dv_n|-\frac{8}{\sqrt{n}}.
\]
Taking $\limsup$ on both sides of the above inequality and employing Proposition \ref{prop:4-8}, we obtain
\begin{eqnarray*}\label{prop:4-9-3}
\int_{\mathbb{R}^N}|Dv_0|\geq\limsup_{n\to\infty}\int_{\mathbb{R}^N}|Dv_n|.
\end{eqnarray*}
Consequently, we have 
\begin{eqnarray*}\label{prop:4-9-2}
\lim_{n\to\infty}\int_{\mathbb{R}^N}|Dv_n|=\int_{\mathbb{R}^N}|Dv_0|.
\end{eqnarray*}
This proves that $I(v_0)=\gamma_m$ and so $v_0$ achieves $\gamma_m$ in $S_m$. 
Moreover, 
\begin{eqnarray*}\label{prop:4-9-4}
P(v_0)=\lim_{n\to\infty}P(v_n)=0.
\end{eqnarray*}

Now since $v_0$ is a minimizer of $I$ restricted on $S_m$, according to \cite[Corollary 3.6]{DeSc98} (or cf. \cite[Theorem 9.6.1]{ABM06}), there exist $\lambda\in\mathbb{R}$ such that
\[
\partial{J_1(v_0)}+\lambda\partial{J_2(v_0)}-\partial{\mathcal{F}(v_0)}\ni0.
\]
More precisely, there exist $v_0^*\in\partial{J_1(v_0)}$, $\gamma_0^*\in\partial{J_2(v_0)}$ such that
\[
v_0^*+\lambda\gamma_0^*-\mathcal{F}'(v_0)=0.
\]
Arguing as the proof of Proposition 4.23 in \cite{KaSc07}, for the above $v_0^*$ and $\gamma_0^*$, there exist $z\in L^{\infty}(\mathbb{R}^N,\mathbb{R}^N)$  satisfying $\|z\|_\infty\leq1$, $(z,Dv_0)=|Dv_0|$, $-\mbox{div}z=v_0^*$ in $L_{\infty,N}(\mathbb{R}^N)$, 
and a $\gamma\in L^\infty(\mathbb{R}^N)$ with $\|\gamma\|_\infty\leq1$, $\gamma v_0=|v_0|$ a.e. in $\mathbb{R}^N$ such that
\begin{eqnarray}\label{prop:4-9-5}
-\mbox{div}z+\lambda\gamma-f(v_0)-|v_0|^{1^*-2}v_0=0.
\end{eqnarray}

To complete the proof, we show that $\lambda>0$.
Indeed, since $P(v_0)=0$, testing \eqref{prop:4-9-5} with $v_0$ gives
\begin{eqnarray*}
  \lambda &=& \frac{1}{m}\left[\int_{\mathbb{R}^N}f(v_0)v_0 \, \mbox{d}x+\int_{\mathbb{R}^N}|v_0|^{1^*} \, \mbox{d}x-\int_{\mathbb{R}^N}|Dv_0|\right]\\
&\geq&\frac{1}{m}\left[N\int_{\mathbb{R}^N}F(v_0) \, \mbox{d}x-(N-1)\int_{\mathbb{R}^N}f(v_0)v_0 \, \mbox{d}x-P(v_0)\right]>0
\end{eqnarray*}
by ($f_5$). This completes the proof.
\end{proof}

\begin{rem}
  For any $\varphi\in BV(\mathbb{R}^N)$, testing \eqref{prop:4-9-5} with $\varphi-v_0$ and using \eqref{Green}, we obtain
  \begin{eqnarray*}
    \int_{\mathbb{R}^N}\Big(f(v_0)+|v_0|^{1^*-2}v_0\Big)(\varphi-v_0) \, \mbox{d}x 
    &=& \int_{\mathbb{R}^N}\Big(-\mbox{div}z+\lambda\gamma\Big)(\varphi-v_0) \\
    &=&\int_{\mathbb{R}^N}\Big((z,D\varphi)-|Dv_0|\Big)+\lambda\int_{\mathbb{R}^N}(\gamma\varphi-|v_0|) \\
    &\leq&\int_{\mathbb{R}^N}(|D\varphi|-|Dv_0|)+\lambda\int_{\mathbb{R}^N}(|\varphi|-|v_0|).
  \end{eqnarray*}
\end{rem}

It remains to prove that the critical value satisfies $0<\gamma_m<\frac{1}{N}S^N$. 
We will give estimates in Section \ref{sec:Em} and then complete the proof of Theorem \ref{thm:m1}.

\section{Estimates of the critical values}\label{sec:Em}
\indent\indent
In this section, we follow the approach of \cite{JeLu20,LiZo24} to study properties of the critical value $E_m$.
We will prove that if ($f_1$)--($f_4$) hold, then the function $m\mapsto E_m$ is positive, continuous, and nonincreasing. 
If in addition, ($f_6$) holds, then
$
\lim_{m\to\infty}E_m=0^+.
$
This means that if we take $m$ sufficiently large so that $\gamma_m\leq E_m<\frac{1}{N}S^N$, then the bounded minimizing Palais-Smale sequence obtained in Section \ref{sec:exist} is compact.

\begin{lem}\label{lem:2-6}
  Suppose ($f_1$)-($f_4$) hold. Then for any $u\in BV(\mathbb{R}^N)\setminus\{0\}$, there exists a unique $\tau(u)\in\mathbb{R}$ such that $P(\tau(u)*u)=0$. Consequently, $I(\tau(u)*u)>I(\tau*u)$ for any $\tau\neq\tau(u)$. 
\end{lem}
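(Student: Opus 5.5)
Fix $u\in BV(\mathbb{R}^N)\setminus\{0\}$ and study the fibering map $\psi(\tau):=I(\tau*u)$, written explicitly in \eqref{eq:I(tau-u)}. The plan is to show that $\psi'(\tau)=P(\tau*u)$ and that $\psi'$ has exactly one zero, at which $\psi$ attains its strict global maximum. First, differentiate \eqref{eq:I(tau-u)} in $\tau$. The change of variables is routine, and differentiation under the integral is justified by \eqref{lem:2-3-1} together with $u\in L^1\cap L^{1^*}$. Since $\frac{d}{d\tau}\big[e^{-N\tau}F(e^{N\tau}u)\big]=Ne^{-N\tau}\big(f(e^{N\tau}u)e^{N\tau}u-F(e^{N\tau}u)\big)$, this gives
\[
\psi'(\tau)=e^{\tau}\int_{\mathbb{R}^N}|Du|-Ne^{-N\tau}\int_{\mathbb{R}^N}\widetilde F(e^{N\tau}u)\,\mathrm{d}x-e^{1^*\tau}\int_{\mathbb{R}^N}|u|^{1^*}\mathrm{d}x=P(\tau*u).
\]
This matches the computation already used for $\widetilde{\mathcal{F}}'_\tau$ in \eqref{def:tideF'} and in \eqref{prop:4-5-3}. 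Hence zeros of $\tau\mapsto P(\tau*u)$ are exactly the critical points of $\psi$.

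Next, factor out $e^{\tau}$ and set $\Theta(\tau):=e^{-\tau}P(\tau*u)=A-g(\tau)$, where $A=\int|Du|>0$ (because $u\neq0$ is in $L^1$) and
\[
g(\tau)=N\int_{\mathbb{R}^N}\frac{\widetilde F(e^{N\tau}u)}{(e^{N\tau}|u|)^{1+\frac1N}}\,|u|^{1+\frac1N}\,\mathrm{d}x+e^{(1^*-1)\tau}\int_{\mathbb{R}^N}|u|^{1^*}\mathrm{d}x .
\]
This form comes from $e^{-(N+1)\tau}=(e^{N\tau})^{-(1+1/N)}$. By ($f_4$), for each $x$ with $u(x)\neq0$, the first integrand is strictly increasing in $\tau$: the argument $e^{N\tau}u(x)$ moves away from $0$ monotonically in the direction of the sign of $u(x)$. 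The second term is strictly increasing because $1^*>1$. By ($f_3$) and \eqref{ineq:f-F-1}, $\widetilde F\ge \frac1N F>0$, so all terms are nonnegative. Therefore $g$ is continuous and strictly increasing, and $\Theta$ is strictly decreasing.

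The limits are as follows. As $\tau\to+\infty$, $g(\tau)\ge e^{(1^*-1)\tau}\|u\|_{1^*}^{1^*}\to+\infty$, so $\Theta\to-\infty$. As $\tau\to-\infty$, the second term tends to $0$, and the first also tends to $0$. Here I would use ($f_1$), which gives $\widetilde F(t)=o(|t|^{1+1/N})$ as $t\to0$ (note $|F(t)|\le \int_0^{|t|}o(s^{1/N})$). Together with the bound \eqref{lem:2-3-1}, $|\widetilde F(t)|\le C(\varepsilon|t|^{1+1/N}+C_\varepsilon|t|^{1^*})$, this gives a dominated-convergence type estimate: the first term is at most $N\varepsilon C\|u\|_{1+1/N}^{1+1/N}+C_\varepsilon' e^{(1^*-1)\tau}\|u\|_{1^*}^{1^*}$. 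Hence $\Theta(\tau)\to A>0$. By the intermediate value theorem and strict monotonicity there is a unique $\tau(u)$ with $\Theta(\tau(u))=0$, that is, $P(\tau(u)*u)=0$.

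Finally, since $\psi'=e^{\tau}\Theta$ is positive for $\tau<\tau(u)$ and negative for $\tau>\tau(u)$, $\psi$ is strictly increasing and then strictly decreasing. Therefore $I(\tau(u)*u)>I(\tau*u)$ for all $\tau\neq\tau(u)$. I expect the main obstacle to be the limit $\tau\to-\infty$ of the $\widetilde F$-term, which needs the uniform small-$t$ control from ($f_1$) combined with integrability of $|u|^{1+1/N}$ via the embedding. Strict monotonicity itself is immediate from ($f_4$) once one notes that $\{u\neq0\}$ has positive measure.
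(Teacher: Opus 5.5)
Your proof is correct. It shares its two key ingredients with the paper: the identity $\frac{d}{d\tau}I(\tau*u)=P(\tau*u)$, and the factorization $P(\tau*u)=e^{\tau}\bigl[\int_{\mathbb{R}^N}|Du|-g(\tau)\bigr]$ with $g$ strictly increasing by ($f_4$). The difference lies in how you obtain existence and the strict maximum.

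The paper uses the monotonicity of $g$ only in a contradiction argument that rules out two zeros. For existence it invokes Lemma~\ref{lem:2-5}: $I(\tau*u)\to0^+$ as $\tau\to-\infty$ and $I(\tau*u)\to-\infty$ as $\tau\to+\infty$, so the fibering map has a global maximizer, which must be a zero of $P(\tau*u)$. The strict inequality then holds because this maximizer is the unique critical point.

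You instead compute the limits of $\Theta=e^{-\tau}P(\tau*u)$ at $\pm\infty$, using ($f_1$) via \eqref{lem:2-3-1} for $\tau\to-\infty$, and apply the intermediate value theorem. So existence, uniqueness and the increasing--decreasing shape of $\psi$ all follow from one monotonicity statement, and you never need the limits of $\psi$ itself.

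This makes your argument more robust. The paper's proof of Lemma~\ref{lem:2-5}(1) only displays the upper bound $I(\tau*u)\le e^{\tau}\bigl(\int_{\mathbb{R}^N}|Du|-\frac{1}{1^*}e^{(1^*-1)\tau}\int_{\mathbb{R}^N}|u|^{1^*}\mathrm{d}x\bigr)$. That bound does not show $I(\tau*u)>0$. Yet positivity somewhere is exactly what guarantees that the supremum of $\psi$ is attained, rather than being only the limit $0$ at $-\infty$. Your limit $\Theta(\tau)\to\int_{\mathbb{R}^N}|Du|>0$ supplies this directly, since $\psi$ then increases from $0$ on $(-\infty,\tau(u))$.

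The paper's route is shorter once Lemma~\ref{lem:2-5} is granted. Its gap is easy to close with the lower bound $I(v)\ge\frac14\int_{\mathbb{R}^N}|Dv|$ from the proof of Lemma~\ref{lem:4-1}, applied to $v=\tau*u$, where $\|v\|_1=\|u\|_1$ and $\|Dv\|_1=e^{\tau}\int_{\mathbb{R}^N}|Du|$ is small for $\tau$ very negative. Your version is self-contained.
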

\begin{proof}
  By \eqref{eq:I(tau-u)}, we have
  \begin{eqnarray*}
    \frac{d}{d\tau}I(\tau*u) &=& e^{\tau}\int_{\mathbb{R}^N}|Du|+Ne^{-N\tau}\int_{\mathbb{R}^N}F(e^{N\tau}u)dx\\
    &&-Ne^{-N\tau}\int_{\mathbb{R}^N}e^{N\tau}uf(e^{N\tau}u)dx-e^{1^*\tau}\int_{\mathbb{R}^N}|u|^{1^*}dx\\
    &=&\int_{\mathbb{R}^N}|D(\tau*u)|-N\int_{\mathbb{R}^N}\widetilde{F}(\tau*u)dx-\int_{\mathbb{R}^N}|\tau*u|^{1^*}dx\\
    &=&P(\tau*u).
  \end{eqnarray*}
  where $\widetilde{F}$ is defined in ($f_4$).
  By Lemma \ref{lem:2-5}, there exists $\tau(u)\in\mathbb{R}$ such that $I(\tau*u)$ achieves its global maximum at $\tau(u)$. Thus we have
  \[
  P(\tau(u)*u)=\frac{d}{d\tau}\Big|_{\tau=\tau(u)}I(\tau*u)=0.
  \]
  We prove the uniqueness of $\tau(u)$. Suppose, by contradiction, that there exist $\tau_1(u)<\tau_2(u)$ such that
  \[
  P(\tau_1(u)*u)=P(\tau_2(u)*u)=0.
  \]
  Note that 
  \begin{eqnarray*}
    P(\tau*u) &=& e^{\tau}\int_{\mathbb{R}^N}|Du|-Ne^{-N\tau}\int_{\mathbb{R}^N}\widetilde{F}(e^{N\tau}u)dx
    -e^{1^*\tau}\int_{\mathbb{R}^N}|u|^{1^*}dx\\
    &=&e^{\tau}\int_{\mathbb{R}^N}|Du|
    -Ne^{-N\tau}\int_{\mathbb{R}^N}\frac{\widetilde{F}(e^{N\tau}u)}{|e^{N\tau}u|^{1+\frac{1}{N}}}|e^{N\tau}u|^{1+\frac{1}{N}}dx
    -e^{1^*\tau}\int_{\mathbb{R}^N}|u|^{1^*}dx\\
    &=&e^{\tau}\left[\int_{\mathbb{R}^N}|Du|
    -N\int_{\mathbb{R}^N}\frac{\widetilde{F}(e^{N\tau}u)}{|e^{N\tau}u|^{1+\frac{1}{N}}}|u|^{1+\frac{1}{N}}dx
    -e^{(1^*-1)\tau}\int_{\mathbb{R}^N}|u|^{1^*}dx\right].
  \end{eqnarray*}
  Hence
  \begin{eqnarray}\label{lem:2-6-1}
    &&N\int_{\mathbb{R}^N}\frac{\widetilde{F}(e^{N\tau_1}u)}{|e^{N\tau_1}u|^{1+\frac{1}{N}}}|u|^{1+\frac{1}{N}}dx
    +e^{(1^*-1)\tau_1}\int_{\mathbb{R}^N}|u|^{1^*}dx \nonumber\\
    &=&N\int_{\mathbb{R}^N}\frac{\widetilde{F}(e^{N\tau_2}u)}{|e^{N\tau_2}u|^{1+\frac{1}{N}}}|u|^{1+\frac{1}{N}}dx
    +e^{(1^*-1)\tau_2}\int_{\mathbb{R}^N}|u|^{1^*}dx
  \end{eqnarray}
  Since by ($f_4$), $\tau\mapsto\frac{\widetilde{F}(e^{N\tau}u)}{|e^{N\tau}u|^{1+\frac{1}{N}}}$ is strictly increasing, equality (\ref{lem:2-6-1}) cannot hold. This implies that $\tau(u)$ is unique. 
  Now Lemma \ref{lem:2-5} shows that $\tau(u)$ is the unique maximum point of $I(\tau*u)$, and hence
  $I(\tau(u)*u)>I(\tau*u)$ for any $\tau\neq\tau(u)$.
\end{proof}

\begin{lem}\label{lem:2-7}
  Suppose ($f_1$)-($f_4$) hold. Then $E_m=\inf_{u\in\mathcal{P}_m}I(u)>0$ and $I$ is coercive on $\mathcal{P}_m$.
\end{lem}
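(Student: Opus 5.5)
The argument rests on the identity
\[
I(u)-\frac1{N}P(u)=\Big(1-\frac1N\Big)\int_{\mathbb{R}^N}|Du|+\int_{\mathbb{R}^N}\Big(\widetilde F(u)-F(u)\Big)\,\mbox{d}x-\Big(\frac1{1^*}-\frac1N\Big)\int_{\mathbb{R}^N}|u|^{1^*}\,\mbox{d}x,
\]
with $\widetilde F(u)-F(u)=f(u)u-2F(u)$. This combination is awkward because $\frac1{1^*}-\frac1N=\frac{N-2}{N}$ and the sign of $f(u)u-2F(u)$ is unclear. So I will use the combination the paper already exploits in Proposition \ref{prop:4-7} instead. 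For $u\in\mathcal{P}_m$ it gives
\[
I(u)=I(u)-P(u)=N\int_{\mathbb{R}^N}\Big(f(u)u-\big(1+\tfrac1N\big)F(u)\Big)\,\mbox{d}x+\frac1N\int_{\mathbb{R}^N}|u|^{1^*}\,\mbox{d}x .
\]
By \eqref{ineq:f-F-1} this is strictly positive, so $I>0$ on $\mathcal{P}_m$. Positivity of the infimum and coercivity need quantitative control, which I get by comparing $\|Du\|_1$ with the nonlinear terms through $P(u)=0$.

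\textbf{Step 1 (lower bound on $\|Du\|_1$ on $\mathcal{P}_m$).} For $u\in\mathcal{P}_m$, $P(u)=0$ gives $\|Du\|_1=N\int\widetilde F(u)+\|u\|_{1^*}^{1^*}$. By ($f_1$)--($f_2$), $|\widetilde F(t)|\le\varepsilon|t|^{1+\frac1N}+C_\varepsilon|t|^{1^*}$. Combining the Gagliardo--Nirenberg inequality \eqref{ineq:G-N} with $r=1+\frac1N$ (so $\theta_r=1$) and the Sobolev inequality \eqref{def:S1} yields
\[
\|Du\|_1\le C\varepsilon m^{1/N}\|Du\|_1+C'_\varepsilon\|Du\|_1^{1^*}.
\]
Choosing $\varepsilon$ with $C\varepsilon m^{1/N}\le\frac12$ and using $\|Du\|_1>0$ (since $u\ne0$ in $L^1$) gives $\|Du\|_1\ge\delta_m>0$ uniformly on $\mathcal{P}_m$.

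\textbf{Step 2 ($E_m>0$).} I argue by contradiction. Suppose $u_n\in\mathcal{P}_m$ with $I(u_n)\to0$. The displayed formula for $I-P$ together with ($f_3$) gives $\int F(u_n)\to0$, $\|u_n\|_{1^*}\to0$, and (via $f(t)t\ge\theta F(t)$ and the formula) $\int f(u_n)u_n\to0$. Then $P(u_n)=0$ forces $\|Du_n\|_1=N\int\widetilde F(u_n)+\|u_n\|_{1^*}^{1^*}\to0$, contradicting Step 1. An alternative cleaner route is to take $\tau\to-\infty$ direction as in Lemma \ref{lem:4-1}: Lemma \ref{lem:2-6} gives $I(u)=\max_\tau I(\tau*u)\ge I(\tau*u)$, and scaling $\tau*u$ so that $\|D(\tau*u)\|_1=\delta$ (the $\delta$ of Lemma \ref{lem:4-1}) gives $I(u)\ge\frac14\delta$ by \eqref{lem:2-3-(1)}. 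I will use this second argument as the main proof, since it is direct and uniform in $u$.

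\textbf{Step 3 (coercivity).} Let $u_n\in\mathcal{P}_m$ with $\|u_n\|_{BV}=m+\|Du_n\|_1\to\infty$, and suppose $I(u_n)\le C$. As in Proposition \ref{prop:4-5}, the identity for $I-P$ bounds $\int F(u_n)$, $\int(f(u_n)u_n-(1+\frac1N)F(u_n))$ and $\|u_n\|_{1^*}^{1^*}$. Using ($f_3$) with $\theta>1+\frac1N$, this in turn bounds $\int f(u_n)u_n$, hence $\int\widetilde F(u_n)$. Then $P(u_n)=0$ bounds $\|Du_n\|_1$, a contradiction. The main obstacle is this coercivity step, specifically bounding $\int\widetilde F(u_n)=\int(f(u_n)u_n-F(u_n))$ above. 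That bound needs both $\int f u$ and $\int F$ controlled, which requires the strict gap $\theta>1+\frac1N$ in ($f_3$), i.e.\ $f(t)t-(1+\frac1N)F(t)\ge(\theta-1-\frac1N)F(t)$.
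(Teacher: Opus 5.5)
Your proof is correct. For $E_m>0$ your main route is the paper's: rescale $u\in\mathcal{P}_m$ so that $\|D(\tau_0*u)\|_1=\delta$, then combine the fiber maximality of Lemma \ref{lem:2-6} with \eqref{lem:2-3-(1)}. Your coercivity argument, however, is genuinely different.

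The paper argues by contradiction with $v_n=u_n/\|u_n\|_{BV}$. It uses a Lions-type vanishing lemma for $|v_n|^{1^*}$. In the vanishing case it applies the fiber inequality $I(u_n)\ge I(\tau_n*u_n)$ with $\|D(\tau_n*u_n)\|_1=k$ large. In the non-vanishing case it uses the sign of $\liminf I(u_n)/\|u_n\|_{BV}^{1^*}$.

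You instead combine $I=I-P$ on $\mathcal{P}_m$, the uniform gap $\theta>1+\frac1N$ in ($f_3$), and $P=0$. Written out, this is a linear bound:
\[
\|Du\|_1=N\int_{\mathbb{R}^N}\widetilde F(u)\,\mbox{d}x+\|u\|_{1^*}^{1^*}\le\Big(1+N+\frac{1}{N\theta-N-1}\Big)I(u),\qquad u\in\mathcal{P}_m .
\]
This bound is shorter and quantitative, and it avoids concentration compactness entirely. Combined with your Step 1 lower bound $\|Du\|_1\ge\delta_m$, it also gives $E_m>0$ without Lemma \ref{lem:2-6}, hence without ($f_4$); this is essentially your first route in Step 2.

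The paper's argument is modeled on Jeanjean--Lu for nonlinearities without an Ambrosetti--Rabinowitz-type condition. It uses only $F\ge0$, the growth bounds and the fiber structure, so it does not rely on the quantitative gap in ($f_3$). That robustness is what it buys, at the cost of length and of needing ($f_4$).

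One harmless slip: for $r=1+\frac1N$ one has $\theta_r=\frac{N}{N+1}$, not $1$. What equals $1$ is $r\theta_r$, and that is what gives $\int_{\mathbb{R}^N}|u|^{1+\frac1N}\,\mbox{d}x\le C\|u\|_1^{1/N}\|Du\|_1$, the inequality you actually use.
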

\begin{proof}
  Clearly, $\mathcal{P}_m\neq\emptyset$ by Lemma \ref{lem:2-6}.
For sufficiently small $\delta>0$ and for any $u\in BV(\mathbb{R}^N)\setminus\{0\}$ with $\|u\|_1\leq m$, $\|Du\|_1\leq\delta$, arguing as in \eqref{lem:2-3-(1)}, we obtain 
  \begin{eqnarray}\label{cor:2-4}
  P(u)\geq\frac{1}{2}\|Du\|_1.
  \end{eqnarray}
  For any $u\in\mathcal{P}_m$, we have $P(u)=0$. This implies $\|Du\|_1>\delta$.
  Let $u\in\mathcal{P}_m$. Then $\tau(u)=0$. Taking $\tau_0=\ln\left(\frac{\delta}{\|Du\|_1}\right)$, we have $\|D(\tau_0*u))\|_1=e^{\tau_0}\|Du\|_1=\delta$. By Lemma \ref{lem:2-6}, (\ref{lem:2-3-(1)}), we obtain
  \[
  I(u)=I(0*u)\geq I(\tau_0*u)\geq\frac{1}{4}\|D(\tau_0*u)\|_1=\frac{\delta}{4}.
  \]
  Thus $E_m>0$.
  
  Now assume that there exist $\{u_n\}\subset\mathcal{P}_m$ and $d>0$ such that $\|u_n\|_{BV}\to+\infty$ and $I(u_n)\leq d$. Set $v_n=u_n/\|u_n\|_{BV}$, then up to a subsequence
  \begin{eqnarray*}
    &&v_n\rightharpoonup v \mbox{~in~}BV(\mathbb{R}^N), \\
    &&v_n\to v \mbox{~in~}L^r_{loc}(\mathbb{R}^N), 1\leq r<1^*, \\
    &&v_n\to v \mbox{~a.e.~in~}\mathbb{R}^N.
  \end{eqnarray*}
  Let $\rho>0$ and define
  \[
  \eta_n=\sup_{z\in\mathbb{R}^N}\int_{B_\rho(z)}|v_n|^{1^*}dx.
  \]
We claim that $\limsup_{n\to\infty}\eta_n>0$. Indeed, if not, then we can argue as in Lemma 1.21 of \cite{Will96} (see also Lemma 3.8 of \cite{BWW05}) to obtain that {$v_n\to0$ in $L^{1^*}(\mathbb{R}^N)$}, and consequently $\int_{\mathbb{R}^N}F(v_n)dx\to0$ as well. 
To see this, choose $\theta\in(0,1)$.
Then for any $y\in\mathbb{R}^N$, we have 
\begin{eqnarray*}
\int_{B_\rho(y)}|v_n|^{1^*}dx
&=&\left(\int_{B_\rho(y)}|v_n|^{1^*}dx\right)^{\theta}\left(\int_{B_\rho(y)}|v_n|^{1^*}dx\right)^{1-\theta}\\
&\leq&\left(\sup_{z\in\mathbb{R}^N}\int_{B_\rho(z)}|v_n|^{1^*}dx\right)^{\theta}\left(\int_{B_\rho(y)}|v_n|^{1^*}dx\right)^{1-\theta}.
\end{eqnarray*}
Cover $\mathbb{R}^N$ by balls of radius $\rho$ such that each point of $\mathbb{R}^N$ is contained in at most $N+1$ balls. As a consequence,
\[
\int_{\mathbb{R}^N}|v_n|^{1^*}dx
\leq(N+1)\left(\sup_{z\in\mathbb{R}^N}\int_{B_\rho(z)}|v_n|^{1^*}dx\right)^{\theta}\left(\int_{\mathbb{R}^N}|v_n|^{1^*}dx\right)^{1-\theta}\to0.
\]
That is, $v_n\to0$ in $L^{1^*}(\mathbb{R}^N)$.
For $k>0$, let $\tau_n=\ln(k/\alpha_n)$, where $\alpha_n=\|Du_n\|_1$. Then using (\ref{lem:2-3-1}) and H\"{o}lder inequality, we can derive that
  \begin{eqnarray}\label{lem:2-7-1}
    d &\geq& I(u_n)=\max_{\tau\geq0}I(\tau*u_n)\geq I(\tau_n*u_n) \nonumber\\
      &=& k\int_{\mathbb{R}^N}|Dv_n|-\int_{\mathbb{R}^N}F(\tau_n*u_n)dx-\frac{t^{1^*}}{1^*}\int_{\mathbb{R}^N}|v_n|^{1^*}dx \nonumber\\
      &=& k\int_{\mathbb{R}^N}|Dv_n|-o_n(1)\max\{k,k^{1^*}\}.
  \end{eqnarray}
  Since $\|v_n\|_1=\frac{m}{\|u_n\|_{BV}}\to0$ and $\|v_n\|_{BV}=\|v_n\|_1+\|Dv_n\|_1=1$, we have $\|Dv_n\|_1\to1$ and $\|Dv_n\|_1\geq\frac{1}{2}$ for large $n$. 
  Then we get a contradiction from \eqref{lem:2-7-1} if we choose $k>0$ sufficiently large and then letting $n\to\infty$. This proves the claim. As a result, we obtain
  \[
  0\leq\liminf_{n\to\infty}\frac{I(u_n)}{\|u_n\|_{BV}^{1^*}}\leq\liminf_{n\to\infty}\frac{1}{\|u_n\|_{BV}^{1^*-1}}-\limsup_{n\to\infty}\frac{1}{1^*}\int_{\mathbb{R}^N}|v_n|^{1^*}dx\leq -\limsup_{n\to\infty}\eta_n<0.
  \]
  This contradiction shows that $I$ is coercive on $\mathcal{P}_m$.
\end{proof}

\begin{lem}\label{lem:2-8}
  Suppose that ($f_1$)-($f_4$) hold, then the map $u\mapsto\tau(u)$ is continuous on $BV(\mathbb{R}^N)\setminus\{0\}$.
\end{lem}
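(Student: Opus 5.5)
We argue by sequences, following the scheme of \cite{JeLu20,LiZo24}. Let $u_n\to u$ in $BV(\mathbb{R}^N)$ with $u_n,u\neq0$. We will show that $\{\tau(u_n)\}$ is bounded and that every convergent subsequence has limit $\tau(u)$. Because $\tau(u)$ is unique by Lemma \ref{lem:2-6}, this gives $\tau(u_n)\to\tau(u)$.

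\emph{Step 1 (bound from above).} Suppose, along a subsequence, that $\tau_n:=\tau(u_n)\to+\infty$. Since $u_n\to u\neq0$ in $L^{1^*}$, we have $\|u_n\|_{1^*}\geq c>0$, and $\|Du_n\|_1\le C$. Using $F\ge0$ and \eqref{eq:I(tau-u)} we get
\[
I(\tau_n*u_n)\le e^{\tau_n}\Big(C-\tfrac{1}{1^*}e^{(1^*-1)\tau_n}c^{1^*}\Big)\to-\infty .
\]
On the other hand, Lemma \ref{lem:2-6} gives $I(\tau_n*u_n)=\max_\tau I(\tau*u_n)$. Now $\tau_n*u_n\in\mathcal{P}_{\|u_n\|_1}$, and by the argument of Lemma \ref{lem:2-7} its energy is bounded below by $\delta/4>0$. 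The $\delta$ there can be taken uniform, because $\|u_n\|_1\le 2\|u\|_1$. This is a contradiction.

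\emph{Step 2 (bound from below).} Suppose $\tau_n\to-\infty$. Then $P(\tau_n*u_n)=0$ together with \eqref{cor:2-4} shows $\|D(\tau_n*u_n)\|_1>\delta$, again with $\delta$ uniform because the $L^1$ norms are bounded. But $\|D(\tau_n*u_n)\|_1=e^{\tau_n}\|Du_n\|_1\le Ce^{\tau_n}\to0$, a contradiction. If $Du=0$, then $u=0$ in $L^1$, so this case is excluded.

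\emph{Step 3 (identification of the limit).} Let $\tau_n\to\bar\tau$ along a subsequence. We pass to the limit in
\[
0=P(\tau_n*u_n)=e^{\tau_n}\|Du_n\|_1-Ne^{-N\tau_n}\int\widetilde F(e^{N\tau_n}u_n)\,dx-e^{1^*\tau_n}\|u_n\|_{1^*}^{1^*}.
\]
The first and last terms converge because $u_n\to u$ in $BV$ and in $L^{1^*}$. For the middle term, $\widetilde F$ is continuous with $|\widetilde F(t)|\le C(|t|^{1+1/N}+|t|^{1^*})$ by \eqref{lem:2-3-1}, and $u_n\to u$ in $L^{1+1/N}\cap L^{1^*}$ by interpolation. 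Hence the term converges by a Vitali/dominated-convergence argument on the superposition operator. This gives $P(\bar\tau*u)=0$, so $\bar\tau=\tau(u)$ by uniqueness.

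The main obstacle is Step 1: it requires the lower bound $E>0$ on Pohozaev sets to be uniform in the mass. The plan is to redo the proof of \eqref{lem:2-3-(1)} with $m$ replaced by $\sup_n\|u_n\|_1$. That estimate only uses $\|u\|_1\le m$, so the resulting $\delta$ depends only on this upper bound and not on $n$.
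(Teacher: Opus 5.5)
Your Steps 1 and 3 are essentially the paper's argument. The upper bound comes from $F\ge0$ together with $\|u_n\|_{1^*}$ bounded away from zero. The limit is identified by passing to the limit in $P(\tau_n*u_n)=0$ and invoking uniqueness from Lemma \ref{lem:2-6}; you also supply the continuity-of-superposition details that the paper leaves implicit.

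Your Step 2 is correct but takes a genuinely different route. The paper excludes $\tau(u_n)\to-\infty$ softly, using maximality of $\tau(u_n)$ and continuity of $I$ on $BV$:
\[
0<I(\tau(u)*u)=\lim I(\tau(u)*u_n)\le\liminf I(\tau(u_n)*u_n)\le\liminf e^{\tau(u_n)}\|Du_n\|_1=0 .
\]
This needs no estimate that is uniform in the mass. You instead use the Pohozaev constraint quantitatively, via \eqref{cor:2-4} with a threshold $\delta$ that depends only on an upper bound for $\|u_n\|_1$. This is valid for two reasons. First, the Gagliardo--Nirenberg bound $\int|v|^{1+1/N}\,dx\le C\|v\|_1^{1/N}\|Dv\|_1$ involves $\|v\|_1$ only monotonically. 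Second, $\|\tau*v\|_1=\|v\|_1$. Your route costs a rerun of the small-gradient estimate with explicit mass dependence. In return it gives a reusable fact: Pohozaev points of bounded mass have $\|Dv\|_1$ bounded below uniformly.

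One remark on your closing paragraph: you place the ``main obstacle'' in Step 1, but there only the sign $I(\tau_n*u_n)\ge0$ is needed, because your upper bound tends to $-\infty$. That sign holds for each $n$ separately, for example from Lemma \ref{lem:2-5}(1) and maximality of $\tau_n$. The mass-uniform $\delta$ is genuinely needed only in your Step 2, where you have in fact justified it.
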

\begin{proof}
It is obvious that the mapping $u\mapsto\tau(u)$ is well defined on $BV(\mathbb{R}^N)\setminus\{0\}$. 
Let $\{u_n\}\subset BV(\mathbb{R}^N)\setminus\{0\}$ be a sequence with $u_n\to u\neq0$ in $BV(\mathbb{R}^N)$. We prove that $\tau(u_n)\to\tau(u)$ as $n\to\infty$.  

Firstly, we prove that $\{\tau(u_n)\}$ is bounded in $\mathbb{R}$. 
Indeed, assume for contradiction that there exists a subsequence (still denoted by itself) such that $\tau(u_n)\to+\infty$ as $n\to\infty$. Then by Lemma \ref{lem:2-7}, (\ref{eq:I(tau-u)}) and (\ref{ineq:f-F-1}), we have
  \begin{eqnarray}\label{lem:2-6-2}
    0 &\leq& e^{-\tau(u_n)}I(\tau(u_n)*u_n)\nonumber\\
    &=&\int_{\mathbb{R}^N}|Du_n|-e^{-(N+1)\tau(u_n)}\int_{\mathbb{R}^N}F(e^{N\tau(u_n)}u_n)dx
    -\frac{1}{1^*}e^{(1^*-1)\tau(u_n)}\int_{\mathbb{R}^N}|u_n|^{1^*}dx\nonumber\\
    &\leq&\int_{\mathbb{R}^N}|Du_n|-\frac{1}{1^*}e^{(1^*-1)\tau(u_n)}\int_{\mathbb{R}^N}|u_n|^{1^*}dx\nonumber\\
    &\to-\infty&.
  \end{eqnarray}
  This contradiction implies that $\{\tau(u_n)\}$ is bounded from above. Now assume that up to a subsequence, $\tau(u_n)\to-\infty$ as $n\to\infty$. 
  Note that $\tau(u)*u_n\to\tau(u)*u$ in $BV(\mathbb{R}^N)$. By Lemma \ref{lem:2-6}, we have
  \begin{eqnarray*}
    0 &<& I(\tau(u)*u)=\lim_{n\to\infty}I(\tau(u)*u_n)\nonumber\\
    &\leq&\liminf_{n\to\infty}I(\tau(u_n)*u_n)\nonumber\\
    &\leq&\liminf_{n\to\infty}\int_{\mathbb{R}^N}|D(\tau(u_n)*u_n)|\nonumber\\
    &=&\lim_{n\to\infty}e^{\tau(u_n)}\int_{\mathbb{R}^N}|Du_n|=0,
  \end{eqnarray*}
  a contradiction. This shows that $\{\tau(u_n)\}$ is bounded in $\mathbb{R}$. Therefore, we may assume that $\tau(u_n)\to\tau_0$ as $n\to\infty$, and this together with $u_n\to u$ in $BV(\mathbb{R}^N)$ yields $\int_{\mathbb{R}^N}|D(\tau(u_n)*u_n)|\to\int_{\mathbb{R}^N}|D(\tau_0*u)|$ as $n\to\infty$. 
  
  Secondly, we prove that $P(\tau_0*u)=0$. This is because
  \begin{eqnarray*}
    P(\tau_0*u) &=& \int_{\mathbb{R}^N}|D(\tau_0*u)|-N\int_{\mathbb{R}^N}\widetilde{F}(\tau_0*u)dx
    -\int_{\mathbb{R}^N}|\tau_0*u|^{1^*}dx\\
    &=&\lim_{n\to\infty}\left\{\int_{\mathbb{R}^N}|D(\tau(u_n)*u_n)|-N\int_{\mathbb{R}^N}\widetilde{F}(\tau(u_n)*u_n)dx
    -\int_{\mathbb{R}^N}|\tau(u_n*u_n)|^{1^*}dx\right\}\\
    &=&\lim_{n\to\infty}P(\tau(u_n)*u_n)=0.
  \end{eqnarray*}
  
  By the uniqueness of $\tau(u)$, we conclude that $\tau(u)=\tau_0$. This proves that $\tau(u_n)=\tau(u)$ as $n\to\infty$.  
\end{proof}

\begin{lem}\label{lem:2-9}
  Suppose that ($f_1$)-($f_4$) hold. 
  Then the mapping $m\mapsto E_m$ is continuous for $m>0$.
\end{lem}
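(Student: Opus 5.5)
We follow the scheme of \cite{JeLu20,LiZo24}. Fix $m>0$ and a sequence $m_k\to m$, and prove separately
\[
\limsup_{k\to\infty}E_{m_k}\leq E_m
\qquad\mbox{and}\qquad
\liminf_{k\to\infty}E_{m_k}\geq E_m .
\]
The tool in both directions is the same. Take an element of one Pohozaev manifold, rescale its amplitude to land on the other mass constraint, project onto that Pohozaev manifold with the map $u\mapsto\tau(u)*u$ of Lemma \ref{lem:2-6}, and control the projection parameter by Lemma \ref{lem:2-8}. Since $J_2(su)=sJ_2(u)$ for $s>0$, the amplitude rescaling $u\mapsto\frac{m_k}{m}u$ maps $S_m$ onto $S_{m_k}$. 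By \eqref{eq:I(tau-u)}, for fixed $\tau$ the functional $I(\tau*u)$ is continuous in $u$ on $BV(\mathbb{R}^N)$, since $\mathcal{F}$ is $C^1$.

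\textbf{Upper bound.} Given $\varepsilon>0$, pick $u\in\mathcal{P}_m$ with $I(u)<E_m+\varepsilon$. Set $u_k=\frac{m_k}{m}u\in S_{m_k}$; then $u_k\to u$ in $BV(\mathbb{R}^N)$. By Lemma \ref{lem:2-8}, $\tau(u_k)\to\tau(u)=0$, where $\tau(u)=0$ because $P(u)=0$ and $\tau(u)$ is unique. Hence $\tau(u_k)*u_k\to u$ in $BV(\mathbb{R}^N)$. This convergence holds because the scaling $\tau*$ acts continuously in $\tau$ on a fixed $W^{1,1}$ function. For a general BV function one only gets strict convergence, and we argue through the explicit formula \eqref{eq:I(tau-u)}, which involves only $e^\tau\int|Du|$ and integrals of $u$. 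Therefore
\[
E_{m_k}\leq I(\tau(u_k)*u_k)\to I(u)<E_m+\varepsilon .
\]

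\textbf{Lower bound.} Take $u_k\in\mathcal{P}_{m_k}$ with $I(u_k)<E_{m_k}+\frac1k$. By the upper bound, $E_{m_k}$ is bounded. The coercivity argument of Lemma \ref{lem:2-7}, run uniformly for masses in a compact subinterval of $(0,\infty)$ (its proof only uses $\|u\|_1$ bounded), shows that $\{u_k\}$ is bounded in $BV(\mathbb{R}^N)$. Put $w_k=\frac{m}{m_k}u_k\in S_m$ and let $s_k=\tau(w_k)$. Then
\[
E_m\leq I(s_k*w_k)=I(s_k*u_k)+\big[I(s_k*w_k)-I(s_k*u_k)\big]\leq I(u_k)+\big[I(s_k*w_k)-I(s_k*u_k)\big],
\]
where the last step uses Lemma \ref{lem:2-6}: since $u_k\in\mathcal{P}_{m_k}$, the map $\tau\mapsto I(\tau*u_k)$ is maximized at $\tau=0$.

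\textbf{Main obstacle.} It remains to show that the bracket tends to $0$, which requires a bound $|s_k|\leq C$. Lemma \ref{lem:2-8} only gives continuity at a fixed limit, and $u_k$ need not converge, so we redo its boundedness argument uniformly.

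For the upper bound on $s_k$, use \eqref{lem:2-6-2}. We need $\int|w_k|^{1^*}$ bounded below. This follows from $P(u_k)=0$ together with the lower bound $\|Du_k\|_1>\delta$ from the proof of Lemma \ref{lem:2-7}. Indeed, $P(u_k)=0$ and \eqref{lem:2-3-1} give
\[
\delta<\|Du_k\|_1\leq C\varepsilon m_k^{1/N}\|Du_k\|_1+C\|u_k\|_{1^*}^{1^*},
\]
so $\|u_k\|_{1^*}$ is bounded away from zero. Since $\|Dw_k\|_1$ is bounded, letting $s_k\to+\infty$ in \eqref{lem:2-6-2} produces a contradiction.

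For the lower bound on $s_k$, use Lemma \ref{lem:2-7}. By the upper bound already proved, $E_m\leq I(s_k*w_k)$, and
\[
I(s_k*w_k)\leq e^{s_k}\sup_k\|Dw_k\|_1 ,
\]
so $e^{s_k}\geq E_m\big/\sup_k\|Dw_k\|_1>0$.

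With $s_k$ bounded, $\|s_k*w_k-s_k*u_k\|$ in $L^1\cap L^{1^*}$ together with the gradient term is $O(|m/m_k-1|)$, uniformly in $k$. Combined with the local uniform continuity of $\mathcal{F}$ on bounded sets, this shows the bracket tends to $0$. Hence $E_m\leq\liminf_k E_{m_k}$, which together with the upper bound gives continuity of $m\mapsto E_m$.
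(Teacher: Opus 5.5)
Your proof is correct. The upper-bound half is the paper's argument: amplitude rescaling $u_k=\frac{m_k}{m}u$, Lemma~\ref{lem:2-8}, and continuity of $(\tau,u)\mapsto I(\tau*u)$ read off from \eqref{eq:I(tau-u)}. You are right that $\tau(u_k)*u_k\to u$ fails in the $BV$ norm for general $BV$ functions, so delete that sentence and keep only the formula-based justification.

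In the lower-bound half you transport $u_k\in\mathcal{P}_{m_k}$ to $S_m$ differently from the paper.

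\emph{The paper} uses the spatial dilation $\widetilde v_k=v_k(\cdot/t_k)$ with $t_k=(m/m_k)^{1/N}$. This dilation commutes with $\tau*$, so $I(\tau*\widetilde v_k)-I(\tau*v_k)$ becomes an exact expression: the factors $t_k^{N-1}-1$ and $t_k^{N}-1$ multiply $\int|D(\tau*v_k)|$, $\int F(\tau*v_k)$ and $\int|\tau*v_k|^{1^*}$. No continuity estimate on $F$ is needed, only boundedness of these integrals, and hence only an upper bound on $\tau(\widetilde v_k)$.

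\emph{Your route} uses the amplitude rescaling $w_k=\frac{m}{m_k}u_k$. It also commutes with $\tau*$, and it is a genuine $BV$-norm perturbation, $\|s_k*w_k-s_k*u_k\|_{BV}=|m/m_k-1|\,\|s_k*u_k\|_{BV}$. The price is that you need $\mathcal{F}$ to be Lipschitz on bounded subsets of $L^1\cap L^{1^*}$. Justify this from \eqref{lem:2-3-1} together with interpolation of $L^{1+\frac1N}$ between $L^1$ and $L^{1^*}$, which makes $\mathcal{F}'$ bounded on bounded sets. The $C^1$ property alone does not give uniform continuity on bounded sets in infinite dimensions.

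Your two auxiliary bounds correspond to the paper's claims. Your lower bound on $\|u_k\|_{1^*}$ via $P(u_k)=0$ corresponds to the paper's Claim (2); make sure the constant $\delta$ is chosen uniformly for $m_k\le m+1$, which the proof of \eqref{lem:2-3-(1)} permits. Your upper bound on $s_k$ is Claim (3).

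The lower bound on $s_k$ that you prove is harmless but unnecessary. Your bracket estimate only needs $s_k*u_k$ bounded in $BV$, that is, $e^{s_k}$ bounded above.
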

\begin{proof}
  We prove that for any $m_k\to m$ as $k\to\infty$, we have $E_{m_k}\to E_m$. We divide the proof into two steps.
  
  Firstly, we prove that $\limsup_{k\to\infty}E_{m_k}\leq E_m$.
  
  Let $u\in\mathcal{P}_m$ and set $u_k=\frac{m_k}{m}u$. 
  Then $u_k\in S_{m_k}$ and $u_k\to u$ in $BV(\mathbb{R}^N)$, $\tau(u_k)\to\tau(u)=0$. Thus by the definition of $E_{m_k}$, we have
  \[
  E_{m_k}\leq I(\tau(u_k)*u_k)\leq I(u)+o_k(1).
  \]
  Then
  \[
  \limsup_{k\to\infty}E_{m_k}\leq I(u).
  \]
  Taking the infimum over $\mathcal{P}_m$ with respect to $u$, we have
  \[
  \limsup_{k\to\infty}E_{m_k}\leq E_m.
  \] 
  
  Secondly, we prove that $E_m\leq\liminf_{k\to\infty}E_{m_k}$.
  
  For $k\in\mathbb{N}^+$, there exists $v_k\in\mathcal{P}_{m_k}$ such that 
  \[
  I(v_k)<E_{m_k}+\frac{1}{k}.
  \]
  Let $t_k=(\frac{m}{m_k})^{\frac{1}{N}}$ and define $\widetilde{v}_k(\cdot)=v_k(\frac{\cdot}{t_k})$. Then $t_k\to1$ as $k\to\infty$ and $\widetilde{v}_k\in S_m$.
  
  Claim (1): $\{v_k\}$, $\{\widetilde{v}_k\}$ are bounded sequences in $BV(\mathbb{R}^N)$. In fact, since $\limsup_{k\to\infty}E_{m_k}\leq E_m$, $E_{m_k}$ is bounded, so we have 
  \[
  I(v_k)<E_m+1.
  \]
  This together with $\|v_k\|_1\leq m+1$ and Lemma \ref{lem:2-7} implies that $\{v_k\}$ is bounded in $BV(\mathbb{R}^N)$, and so is $\{\widetilde{v}_k\}$.
  
  Claim (2): $\limsup_{k\to\infty}\int_{\mathbb{R}^N}|\widetilde{v}_k|^{1^*}dx>0$. In fact, assume for contradiction that $\widetilde{v}_k\to0$ in $L^{1^*}(\mathbb{R}^N)$. Since $t_k$ is bounded, we have $v_k\to0$ in $L^{1^*}(\mathbb{R}^N)$. 
  Then by the interpolation inequality and the fact that $\|v_k\|_1\leq m+1$, we have $\int_{\mathbb{R}^N}\widetilde{F}(v_k)dx\to0$, and thus
  \[
  \int_{\mathbb{R}^N}|Dv_k|=N\int_{\mathbb{R}^N}\widetilde{F}(v_k)dx+\int_{\mathbb{R}^N}|v_k|^{1^*}dx\to0.
  \]
  Since $v_k\in\mathcal{P}_{m_k}$, we have $P(v_k)=0$. 
  But according to \eqref{cor:2-4}, the above limit implies that $P(v_k)\geq\frac{1}{2}\|Dv_k\|_1>0$, a contradiction.
  
  Claim (3): $\limsup_{k\to\infty}\tau(\widetilde{v}_k)<+\infty$. In fact, suppose that up to a subsequence, $\tau(\widetilde{v}_k)\to+\infty$ as $k\to\infty$. 
  Note that $P(\tau(\widetilde{v}_k)*\widetilde{v}_k)=0$, $F(\tau(\widetilde{v}_k)*\widetilde{v}_k)>0$, and $\{\widetilde{v}_k\}$ is bounded in $BV(\mathbb{R}^N)$.
  By Lemma \ref{lem:2-7} and \eqref{eq:I(tau-u)}, we can argue as in (\ref{lem:2-6-2}) to obtain
  \begin{eqnarray*}
    0 \leq e^{-\tau(\widetilde{v}_k)}I(\tau(\widetilde{v}_k)*\widetilde{v}_k) \to-\infty,
  \end{eqnarray*}
  which is a contradiction. Therefore, $\{\tau(\widetilde{v}_k)\}$ is bounded.
  
  Recall that $\widetilde{v}_k\in S_m$ and $v_k\in S_{m_k}$.
  By the definition of $E_m$ and Lemma \ref{lem:2-6}, we have
  \begin{eqnarray*}
    E_m &\leq& I(\tau(\widetilde{v}_k)*\widetilde{v}_k) \\
     &\leq& I(\tau(\widetilde{v}_k)*v_k)+|I(\tau(\widetilde{v}_k)*\widetilde{v}_k)-I(\tau(\widetilde{v}_k)*v_k)| \\
     &\leq& I(v_k)+|I(\tau(\widetilde{v}_k)*\widetilde{v}_k)-I(\tau(\widetilde{v}_k)*v_k)| \\
     &\leq& E_{m_k}+\frac{1}{k}+M(k),
  \end{eqnarray*}
  where 
  \begin{eqnarray*}
    M(k) &=& |I(\tau(\widetilde{v}_k)*\widetilde{v}_k)-I(\tau(\widetilde{v}_k)*v_k)| \\
     &=& |t_k^{N-1}-1|\int_{\mathbb{R}^N}|D(\tau(\widetilde{v}_k)*v_k)|
     -|t_k^N-1|\int_{\mathbb{R}^N}F(\tau(\widetilde{v}_k)*v_k)dx \\
     &&-\frac{1}{1^*}|t_k^N-1|\int_{\mathbb{R}^N}|\tau(\widetilde{v}_k)*v_k|^{1^*}dx.
  \end{eqnarray*}
  Since $\{t_k\}$ and $\{\tau(\widetilde{v}_k)\}$ are bounded and $\{v_k\}$ is bounded in $BV(\mathbb{R}^N)$, we derive that $\{M(k)\}$ is bounded and $M(k)\to0$ as $k\to\infty$. Therefore
  \[
  E_m\leq E_{m_k}+\frac{1}{k}+o_k(1),
  \]
  which yields $E_m\leq\liminf_{k\to\infty}E_{m_k}$.
  
  In summary, we conclude that
  $
  \lim_{k\to\infty}E_{m_k}=E_m.
  $
  Thus $E_m$ is continuous with respect to $m$.
\end{proof}

\begin{lem}\label{lem:2-10}
  Assume that ($f_1$)-($f_4$) hold. Then the function $m\mapsto E_m$ is nonincreasing on $(0,+\infty)$.
\end{lem}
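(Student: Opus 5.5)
To prove Lemma \ref{lem:2-10}, I would show that for any $0<m_1<m_2$ and any $\varepsilon>0$ we have $E_{m_2}\le E_{m_1}+\varepsilon$. Following \cite{JeLu20,LiZo24}, the idea is to take an almost-minimizer on $\mathcal{P}_{m_1}$, add mass far away at negligible energy cost, and then project back onto $\mathcal{P}_{m_2}$ using Lemma \ref{lem:2-6}.

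\textbf{Step 1 (almost-minimizer with compact support).} Fix $\varepsilon>0$ and pick $u\in\mathcal{P}_{m_1}$ with $I(u)<E_{m_1}+\varepsilon$. Since $C_0^\infty(\mathbb{R}^N)$ is dense in $BV(\mathbb{R}^N)$ for strict convergence, choose $u_\delta\in C_0^\infty(\mathbb{R}^N)$ with $u_\delta\to u$ strictly, and rescale by a constant so that $\|u_\delta\|_1=m_1$. Strict convergence with $L^1$ convergence gives convergence of $\int|Du_\delta|$, and by the BV embeddings it also gives convergence of the $F$ and $|\cdot|^{1^*}$ integrals. The maps $\tau\mapsto I(\tau*u_\delta)$ therefore converge locally uniformly. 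Combining this with the bounds on $\tau(u_\delta)$ argued as in Lemma \ref{lem:2-8}, and with $\tau(u)=0$, I expect $\max_\tau I(\tau*u_\delta)\le I(u)+\varepsilon$ for $\delta$ small. Lemma \ref{lem:2-8} only covers BV-norm continuity, so this part has to be redone under strict convergence. Call the resulting function $\tilde u$, supported in $B_R(0)$.

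\textbf{Step 2 (adding mass with small gradient).} Let $\psi\in C_0^\infty(\mathbb{R}^N)$ with $\|\psi\|_1=m_2-m_1$, and set $v_\lambda=\lambda^{N}\psi(\lambda x)$, which is just $\tau*\psi$ with $\tau=\ln\lambda$. For $\lambda\to0$ we get
\[
\|Dv_\lambda\|_1=\lambda\|D\psi\|_1\to0,\qquad \|v_\lambda\|_{1^*}\to0,
\]
with the $F$ term also going to $0$ by \eqref{lem:2-3-1} and \eqref{ineq:G-N}. Translate $v_\lambda$ so that its support is disjoint from that of $\tau*\tilde u$ for every $\tau$ in the relevant compact range, and put $w=\tilde u+v_\lambda(\cdot-x_0)$. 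Then $w\in S_{m_2}$, and by disjointness of supports,
\[
I(\tau*w)=I(\tau*\tilde u)+I(\tau*v_\lambda).
\]

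\textbf{Step 3 (projection onto $\mathcal{P}_{m_2}$).} By Lemma \ref{lem:2-6},
\[
E_{m_2}\le I(\tau(w)*w)=I(\tau(w)*\tilde u)+I(\tau(w)*v_\lambda)\le E_{m_1}+2\varepsilon+\sup_{\tau\le T} I(\tau*v_\lambda).
\]
Here we use $F\ge0$, so that $I(\tau*v_\lambda)\le e^{\tau}\lambda\|D\psi\|_1$. The main obstacle is to bound $\tau(w)$ from above by some $T$ independent of $\lambda$, since the gradient term is multiplied by $e^{\tau}$. I would argue as in \eqref{lem:2-6-2}: $I(\tau(w)*w)\ge E_{m_2}>0$ by Lemma \ref{lem:2-7}, while
\[
e^{-\tau}I(\tau*w)\le\|D\tilde u\|_1+\lambda\|D\psi\|_1-\frac{1}{1^*}e^{(1^*-1)\tau}\|\tilde u\|_{1^*}^{1^*},
\]
which is negative for $\tau\ge T$, with $T$ depending only on $\tilde u$. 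Hence $\tau(w)\le T$, and the last term is at most $e^{T}\lambda\|D\psi\|_1\to0$ as $\lambda\to0$. Letting $\lambda\to0$ and then $\varepsilon\to0$ gives $E_{m_2}\le E_{m_1}$.
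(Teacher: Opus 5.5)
Your proposal is correct and follows essentially the same route as the paper. Both arguments take an almost-minimizer on $\mathcal{P}_{m_1}$, truncate it to compact support, add the missing mass as a far-away bump $\kappa*\tilde v$ with $\kappa\to-\infty$, and conclude using disjoint supports, the maximality of $\tau(w)$ from Lemma \ref{lem:2-6}, and an upper bound on $\tau(w)$. The differences are minor: the paper truncates with $u(x)\chi(\delta x)$, which converges to $u$ in the BV norm, so Lemma \ref{lem:2-8} applies directly and your strict-density detour is unnecessary (under strict convergence, the convergence of $\int|u_\delta|^{1^*}$ does not follow from the embeddings alone, though it is true), while your explicit bound $\tau(w)\le T$ and the estimate $I(\tau*v_\lambda)\le e^{\tau}\lambda\|D\psi\|_1$, obtained from $F\ge0$, are cleaner than the paper's contradiction and H\"older arguments.
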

\begin{proof}
  We prove that for any $m>m'>0$ and $\varepsilon>0$,
  \[
  E_m\leq E_{m'}+\varepsilon.
  \]
  In fact, by the definition of $E_{m'}$, there exists $u\in\mathcal{P}_{m'}$ such that
  \begin{eqnarray}\label{lem:2-10-1}
    I(u)\leq E_{m'}+\frac{\varepsilon}{2}.
  \end{eqnarray}
  Let $\chi\in C_0^\infty(\mathbb{R}^N)$ be radial and satisfy
  \begin{eqnarray*}
    \chi(x)=\left\{
              \begin{array}{ll}
                1, & \hbox{$|x|\leq1$;} \\
                \in[0,1], & \hbox{$|x|\in(1,2)$;} \\
                0, & \hbox{$|x|\geq2$.}
              \end{array}
            \right.
  \end{eqnarray*}
For sufficiently small $\delta>0$, we define
\[
u_\delta(x)=u(x)\chi(\delta x)\in BV(\mathbb{R}^N)\setminus\{0\}.
\]
Then $u_\delta\to u$ in $BV(\mathbb{R}^N)$ as $\delta\to0^+$, and by Lemma \ref{lem:2-8}, 
$\lim_{\delta\to0^+}\tau(u_\delta)=\tau(u)=0$. 
Thus
\[
\int_{\mathbb{R}^N}|D(\tau(u_\delta)*u_\delta)|\to\int_{\mathbb{R}^N}|D(\tau(u)*u)|=\int_{\mathbb{R}^N}|Du|.
\]
Fix $\delta>0$ small enough such that
\begin{eqnarray}\label{lem:2-10-2}
  I(\tau(u_\delta)*u_\delta)\leq I(u)+\frac{\varepsilon}{4}.
\end{eqnarray}
Take $v\in C_0^\infty(\mathbb{R}^N)$ such that $\mbox{supp}(v)\subset B_{1+{4}/{\delta}}(0)\setminus B_{{4}/{\delta}}(0)$, and define
\[
\widetilde{v}=\frac{m-\|u_\delta\|_1}{\|v\|_1}v.
\]
Note that $\mbox{supp}(u_\delta)\subset B_{2/\delta}(0)$. For $\kappa\leq0$, $\mbox{supp}(u_\delta)\cap\mbox{supp}(\kappa*\widetilde{v})=\emptyset$. Define $w_\kappa=u_\delta+\kappa*\widetilde{v}$. We have $w_\kappa\in S_m$.
We claim that $\tau(w_\kappa)$ is bounded from above as $\kappa\to-\infty$. 
Indeed, by Lemma \ref{lem:2-7}, $I(\tau(w_\kappa)*w_\kappa)\geq0$; moreover, $w_\kappa\to u_\delta$ a.e. in $\mathbb{R}^N$ as $\kappa\to-\infty$, so we can argue as (\ref{lem:2-6-2}) to get a contradiction that 
\[
0\leq e^{-\tau(w_\kappa)}I(\tau(w_\kappa)*w_\kappa)\to-\infty.
\]
This proves that the claim holds. Since $\tau(w_\kappa)+\kappa\to-\infty$ as $\kappa\to-\infty$, we have
\begin{eqnarray}\label{lem:2-10-21}
\|D[(\tau(w_\kappa)+\kappa)*\widetilde{v}]\|_1\to0,
\end{eqnarray}
\begin{eqnarray}\label{lem:2-10-22}
(\tau(w_\kappa)+\kappa)*\widetilde{v}\to0\quad\mbox{in~}L^{1+\frac{1}{N}}(\mathbb{R}^N).
\end{eqnarray}
Moreover, note that \eqref{lem:2-10-21} implies that $(\tau(w_\kappa)+\kappa)*\widetilde{v}$ is bounded in $BV(\mathbb{R}^N)$. 
Then, using \eqref{lem:2-10-22} and the H\"{o}lder inequality, we can prove that
\[
\|F((\tau(w_\kappa)+\kappa)*\widetilde{v})\|_1\to0.
\]
Therefore, by the embedding inequality, we obtain that for sufficiently small $\kappa<0$, 
\begin{eqnarray}\label{lem:2-10-3}
  I((\tau(w_\kappa)+\kappa)*\widetilde{v})\leq\frac{\varepsilon}{4}.
\end{eqnarray}
Thus by Lemma \ref{lem:2-6} and \eqref{lem:2-10-1}, \eqref{lem:2-10-2}, \eqref{lem:2-10-3}, we have
\begin{eqnarray*}
  E_m &\leq& I(\tau(w_\kappa)*w_\kappa) \\
   &=& I(\tau(w_\kappa)*u_\delta)+I(\tau(w_\kappa)*(\kappa*\widetilde{v})) \\
   &\leq& I(\tau(u_\delta)*u_\delta)+I((\tau(w_\kappa)+\kappa)*\widetilde{v}) \\
   &=& I(u)+\frac{\varepsilon}{4}+\frac{\varepsilon}{4} \\
   &=& E_{m'}+\varepsilon.
\end{eqnarray*}
This completes the proof.
\end{proof}

\begin{lem}\label{lem:2-13}
  Assume that ($f_1$)-($f_4$) and ($f_6$) hold. Then $E_m\to0^+$ as $m\to+\infty$.
\end{lem}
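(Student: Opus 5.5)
By Lemma \ref{lem:2-7} we already know $E_m>0$, so it suffices to show $\limsup_{m\to\infty}E_m\le 0$. For any $w\in S_m$, Lemma \ref{lem:2-6} gives $\tau(w)*w\in\mathcal{P}_m$ and
\[
E_m\le I(\tau(w)*w)=\max_{\tau\in\mathbb{R}}I(\tau*w),
\]
so I will bound this maximum for a well-chosen test family.

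\textbf{Setup and reduction to one variable.} Fix a nonnegative $u\in C_0^\infty(\mathbb{R}^N)$ with $\|u\|_1=1$, $0\le u\le 1$, and $u\equiv1$ on some ball $B_0$. Let $a=\int|Du|$. I consider the two-parameter family $w(x)=Au(x/\rho)$ with $A\rho^N=m$. Applying $\tau*$ to this family only changes the parameters, so it stays inside the family. Dropping the nonpositive critical term, with $B>0$ the amplitude and $R=(m/B)^{1/N}$ the radius, one gets for every element of the family
\[
I\le BR^{N-1}a-R^N\int F(Bu)\,dx=m^{1-\frac1N}B^{\frac1N}a-\frac{m}{B}\int F(Bu)\,dx=:g_m(B).
\]
Hence $E_m\le\sup_{B>0}g_m(B)$, and I must show this supremum tends to $0$.

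\textbf{Lower bounds on $F$.} Two bounds are needed.

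Near $0$: by ($f_5$), $F(t)>f(t)t/1^*$, so ($f_6$) gives $F(t)/|t|^{1^*}\to+\infty$ as $t\to0$. Since $0\le u\le 1$, for $B\le1$ I get
\[
\int F(Bu)\ge K(B)B^{1^*}\int|u|^{1^*},
\]
with $K(B):=\inf_{0<t\le B}F(t)/t^{1^*}\to\infty$ as $B\to0$.

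For $B\ge1$: ($f_3$) implies that $t\mapsto F(t)/t^\theta$ is nondecreasing on $(0,\infty)$. Hence $F(Bu)\ge F(1)B^\theta$ on $B_0$, which gives $\frac{m}{B}\int F(Bu)\ge c\,mB^{\theta-1}$.

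\textbf{Maximizing $g_m$.} Write $c_0=\int|u|^{1^*}$.

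For $B\ge1$: since $\theta-1>1/N$,
\[
g_m(B)\le m^{1-1/N}aB^{1/N}-cmB^{\theta-1}\le m^{1-1/N}(a-cm^{1/N})B^{1/N},
\]
which is $<0$ once $m$ is large.

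For $B\le1$: since $1^*-1=1/(N-1)$,
\[
g_m(B)\le m^{1-1/N}aB^{1/N}-mK(B)c_0B^{1/(N-1)}.
\]
Substituting $s=m^{1/N}B^{1/(N(N-1))}$, this becomes $s^{N-1}\bigl(a-K(B)c_0 s\bigr)$. This is the key scale invariance of the critical exponent: the only $m$-dependence sits in the relation between $s$ and $B$. Split into two cases.

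If $K(B)\ge K_*$, then maximizing over $s$ gives $g_m(B)\le C_N a^N/(K_*c_0)^{N-1}$.

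If $K(B)<K_*$, then $B\ge b(K_*)>0$ because $K(B)\to\infty$ as $B\to0$. In that range $s\ge m^{1/N}b^{1/(N(N-1))}$, and $K(B)\ge K(1)>0$ (with $K$ nonincreasing). So $g_m(B)<0$ as soon as $m^{1/N}b^{1/(N(N-1))}K(1)c_0>a$, which holds for all large $m$.

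\textbf{Conclusion.} For any $K_*$ we get $\limsup_m E_m\le C_Na^N/(K_*c_0)^{N-1}$. Letting $K_*\to\infty$ yields $E_m\to0^+$.

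The main obstacle is the regime of small amplitude $B$ described above. Here the critical exponent makes the gradient term and the mass term of equal order, so nothing is gained from $m\to\infty$ alone. The decay has to come from ($f_6$), through $K(B)\to\infty$, and the case split on $K(B)$ is the place where this must be handled carefully.
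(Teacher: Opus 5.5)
Your argument is correct, and it handles the scaling fiber differently from the paper.

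\textbf{The paper's route.} The paper fixes $u\in S_1\cap L^\infty(\mathbb{R}^N)$, sets $u_m=mu$, and bounds $E_m\le I(\tau_m*u_m)\le me^{\tau_m}\int_{\mathbb{R}^N}|Du|$ at the unique Pohozaev point $\tau_m$. It then extracts information from $P(\tau_m*u_m)=0$ twice:
\begin{itemize}
\item dropping $\widetilde F\ge0$, the critical term forces the amplitude $me^{N\tau_m}\to0$;
\item ($f_6$), applied to $\widetilde F$ at that vanishing amplitude, then forces $me^{\tau_m}\to0$.
\end{itemize}

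\textbf{Your route.} Your family $\{Bu(\cdot/R): BR^N=m\}$ is exactly the fiber $\{\tau*(mu)\}$. Your variable satisfies $s^{N-1}=BR^{N-1}=me^{\tau}$, so it is the paper's quantity. Instead of locating the maximizer through $P=0$, you bound $I$ along the whole fiber by the explicit function $s^{N-1}(a-K(B)c_0s)$ and maximize it directly. You discard the critical term and exclude large amplitudes using the $\theta$-growth from ($f_3$). As a result your argument never uses the critical term. It also yields the quantitative bound $\limsup_m E_m\le C_Na^N/(K_*c_0)^{N-1}$, in terms of the blow-up of $F(t)/t^{1^*}$ at $0$. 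The paper's route is shorter because the Pohozaev identity does the localization for free. Keeping the term $-\frac{c_0}{1^*}s^N$ would also dispose of the whole range $B\ge b(K_*)$ at once; this is the analogue of the paper's first claim.

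\textbf{One correction.} You derive $F(t)/|t|^{1^*}\to+\infty$ as $t\to0$ from ($f_5$), which is not among the hypotheses of this lemma. It is not needed. By ($f_6$), for each $M>0$ there is $\delta_M>0$ with $f(s)\ge Ms^{1^*-1}$ on $(0,\delta_M]$. Integrating gives $F(t)\ge \frac{M}{1^*}t^{1^*}$ on that interval. The paper uses this same consequence of ($f_6$) implicitly in its Claim (2).
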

\begin{proof}
  Fix $u\in S_1\cap L^\infty(\mathbb{R}^N)$. For any $m>0$, set $u_m=mu\in S_m$. By Lemma \ref{lem:2-6}, there exists a unique $\tau_m\in\mathbb{R}$ such that $\tau_m*u_m\in\mathcal{P}_m$. Then by Lemma \ref{lem:2-7}, we have
  \[
  0<E_m\leq I(\tau_m*u_m)\leq e^{\tau_m}\int_{\mathbb{R}^N}|Du_m|=me^{\tau_m}\int_{\mathbb{R}^N}|Du|.
  \]
  We prove that $me^{\tau_m}\to0^+$ as $m\to+\infty$, which yields the conclusion of the lemma.
  
  {Claim (1):} $me^{N\tau_m}\to0^+$ as $m\to+\infty$. 
  In fact, since $\tau_m*u_m\in\mathcal{P}_m$, we have
  \[
  \int_{\mathbb{R}^N}|D(\tau_m*u_m)|=N\int_{\mathbb{R}^N}\widetilde{F}(\tau_m*u_m)dx+\int_{\mathbb{R}^N}|\tau_m*u_m|^{1^*}dx.
  \]
  That is,
  \[
  me^{\tau_m}\int_{\mathbb{R}^N}|Du|=Ne^{-N\tau_m}\int_{\mathbb{R}^N}\widetilde{F}(e^{N\tau_m}mu)dx
  +m^{1^*}e^{1^*\tau_m}\int_{\mathbb{R}^N}|u|^{1^*}dx.
  \]
  This yields
  \begin{eqnarray}\label{lem:2-13-1}
    \int_{\mathbb{R}^N}|Du| &=& Nm^{-1}e^{-(N+1)\tau_m}\int_{\mathbb{R}^N}\widetilde{F}(e^{N\tau_m}mu)dx
    +m^{1^*-1}e^{(1^*-1)\tau_m}\int_{\mathbb{R}^N}|u|^{1^*}dx.
  \end{eqnarray}
Note that $\widetilde{F}(t)\geq0$ by \eqref{ineq:f-F-1}.
From the above inequality we have
  \begin{eqnarray*}
    \int_{\mathbb{R}^N}|Du| &\geq& m^{1/N}(me^{N\tau_m})^{1/N(N-1)}\int_{\mathbb{R}^N}|u|^{1^*}dx.
  \end{eqnarray*}
  This implies that
  \[
  \lim_{m\to+\infty}me^{N\tau_m}=0.
  \]
  {Claim (2):} {$me^{\tau_m}\to0^+$ as $m\to+\infty$.} 
Assumption ($f_6$) is used here. 
In fact, by \eqref{ineq:f-F-1} and ($f_6$), for any $\varepsilon>0$, there exists sufficiently small $\delta>0$ such that for any $|t|\leq\delta$, we have
  \begin{eqnarray}\label{lem:2-13-2}
\widetilde{F}(t)\geq\frac{1}{N}F(t)\geq\frac{1}{\varepsilon}|t|^{1^*}.
  \end{eqnarray}
Note that $u\in L^\infty(\mathbb{R}^N)$.
By Claim (1), $me^{N\tau_m}u\to0$ a.e. in $\mathbb{R}^N$ as $m\to+\infty$. Thus from (\ref{lem:2-13-1}) and (\ref{lem:2-13-2}) we get
  \begin{eqnarray*}\label{lem:2-13-3}
    \int_{\mathbb{R}^N}|Du| &\geq& Nm^{-1}e^{-(N+1)\tau_m}\frac{1}{\varepsilon}\int_{\mathbb{R}^N}|e^{N\tau_m}mu|^{1^*}dx\\
&=&\frac{1}{\varepsilon}N(me^{\tau_m})^{1/(N-1)}\int_{\mathbb{R}^N}|u|^{1^*}dx.
  \end{eqnarray*}
By the arbitrariness of $\varepsilon$, we conclude that Claim (2) holds.
  This completes the proof.
\end{proof}

\begin{lem}\label{lem:2-14}
  Assume that ($f_1$)-($f_4$) hold. Then $0<\gamma_m\leq E_m\to0^+$.
\end{lem}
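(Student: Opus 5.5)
The statement has three parts, and I will prove them in order: $\gamma_m>0$, then $\gamma_m\le E_m$, then $E_m\to0^+$ as $m\to+\infty$.

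\emph{Positivity of $\gamma_m$.} By Lemma \ref{lem:4-1} and \eqref{lem:4-2-6}, $\gamma_m>\max\{I(u_1),I(u_2)\}\ge I(u_1)>0$, using Lemma \ref{lem:4-2}(3). A more careful check: $u_1\in A$ and $u_2\notin A$ because $\|Du_2\|_1>2K_m$. Every $h\in\Gamma_m$ is continuous in $BV$, so $t\mapsto\|Dh(t)\|_1$ is continuous and takes the value $2K_m$ somewhere. Hence
\[
\max_t I(h(t))\ge \inf_B I>\sup_A I\ge I(u_1)>0,
\]
which gives $\gamma_m>0$.

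\emph{The inequality $\gamma_m\le E_m$.} Fix $u\in\mathcal P_m$; by Lemma \ref{lem:2-6}, $\tau(u)=0$ and $I(\tau*u)<I(u)$ for $\tau\neq0$. I build a path in $\Gamma_m$ whose maximum is $I(u)$, up to an arbitrarily small error. The ingredients are:
\begin{itemize}
\item[(i)] the scaling path $\tau\mapsto\tau*u$ for $\tau\in[\tau_-,\tau_+]$. Here $\tau_-\ll0$ is chosen so that $\|D(\tau_-*u)\|_1=e^{\tau_-}\|Du\|_1\le K_m$, placing $\tau_-*u$ in $A$. Also $\tau_+\gg0$ is chosen so that $\|D(\tau_+*u)\|_1>2K_m$ and $I(\tau_+*u)\le0$, which Lemma \ref{lem:2-5}(2) allows. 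Along this segment $I\le I(u)$.
\item[(ii)] connecting paths inside $S_m$ from $u_1$ to $\tau_-*u$ and from $\tau_+*u$ to $u_2$.
\end{itemize}
The continuity of $\tau\mapsto\tau*u$ in $BV$ is delicate, since dilations need not be norm-continuous in $BV$. To handle it, I would first approximate $u$ strictly by smooth functions. Concretely, take $w\in C_0^\infty$ with $w\in S_m$ and $I(\tau(w)*w)\le I(u)+\varepsilon$; this is possible by the continuity of $\tau(\cdot)$ from Lemma \ref{lem:2-8} together with strict continuity of $I$. Then work with $\tau(w)*w$, for which dilations are continuous in $BV$.

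For the connecting paths I use the remark after Lemma \ref{lem:4-2}: following \cite[Lemma 2.8]{Jean97}, any two points of $S_m$ with small gradient norm can be joined inside $A$. I would join $u_1$ to $\tau_-*w$ inside $A$ via the convex combination renormalized in $L^1$; the renormalization stays controlled if $w$ and $u_1$ are taken nonnegative, so the combination has constant $L^1$ norm. On this piece $I\le\sup_A I<\gamma_m$. For the other end, join $\tau_+*w$ to $u_2$ inside the region $\{I\le0\}$ by the same type of argument, scaling both far out. Combining the pieces gives $\gamma_m\le \max\{\sup_A I,\;I(u)+\varepsilon,\;0\}$. If $\gamma_m>I(u)+\varepsilon$, this bound would force $\gamma_m\le\sup_A I$, contradicting $\sup_A I<\gamma_m$. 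Hence $\gamma_m\le I(u)+\varepsilon$, and taking the infimum over $u\in\mathcal P_m$ and letting $\varepsilon\to0$ gives $\gamma_m\le E_m$.

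\emph{The limit.} $E_m\to0^+$ is exactly Lemma \ref{lem:2-13}. As written, that lemma needs ($f_6$), so this hypothesis is implicitly required for this part of the statement; positivity of $E_m$ itself is Lemma \ref{lem:2-7}.

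The main obstacle is the path construction in the second part, namely connecting the endpoints $u_1,u_2$ to the fibre $\{\tau*u\}$ while staying in $S_m$ with $BV$-norm continuity. The approach I would try first is to reduce to smooth compactly supported nonnegative functions. For such functions, both dilations and $L^1$-renormalized convex combinations are continuous in $BV$, and their energies are controlled by Lemma \ref{lem:4-1}.
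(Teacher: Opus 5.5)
\textbf{A step that fails as written.} Your plan for $\gamma_m\le E_m$ is sound, but you cannot take $w$ nonnegative. The profile $w$ has to approximate a near-minimizer $u\in\mathcal{P}_m$. Since $F$ is not assumed even, replacing $u$ by $|u|$ (or $-|u|$) lowers $\int_{\mathbb{R}^N}|Du|$ but may also lower $\int_{\mathbb{R}^N}F(u)\,\mbox{d}x$, so $\max_\tau I(\tau*|u|)$ can exceed $I(u)$. Nothing guarantees that $E_m$ is approached by one-signed functions.

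Without a sign, the segment $(1-t)u_1+t(\tau_-*w)$ can lose $L^1$-mass, even pass through $0$, and after renormalization it need not stay in $A$. The segment on the $u_2$ side has the same problem.

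The repair is the device the paper uses, namely disjoint supports.
\begin{itemize}
\item Fix $u_1,u_2\in C_0^\infty(\mathbb{R}^N)$ when invoking Lemma \ref{lem:4-2}; this is allowed, since that lemma only asserts existence.
\item By translation invariance of $I$, replace $w$ by $w(\cdot-y)$, with $|y|$ chosen after $\tau_-$ and so large that $\tau_-*w(\cdot-y)$ is supported away from $\mbox{supp}\,u_1$ and $w(\cdot-y)$ away from $\mbox{supp}\,u_2$.
\item Then both segments have $L^1$-norm exactly $m$ with no renormalization, and total variation is additive on them, so the first segment stays in $A$.
\item Take the second segment at a far-out scale, $T*((1-t)u_2+t\,w(\cdot-y))$. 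Its total variation is bounded and its $L^{1^*}$-norm is bounded below uniformly in $t$, so $I<0$ on it for $T$ large.
\item The dilation arcs $\tau*u_2$ for $\tau\in[0,T]$ and $\tau*w$ for $\tau\in[\tau_+,T]$ stay in $\{I\le0\}$. This follows from Lemmas \ref{lem:2-5} and \ref{lem:2-6}: $I(u_2)\le 0$ forces $\tau(u_2)<0$, and $\tau\mapsto I(\tau*v)$ decreases after $\tau(v)$.
\end{itemize}

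Two smaller points. First, Lemma \ref{lem:2-8} is stated for $BV$-norm convergence, but smooth functions approximate $u$ only strictly. Its proof uses only convergence of $\int|Du_n|$ and of $u_n$ in $L^1\cap L^{1^*}$, so it does apply to mollified, cut-off approximants, but you should say so. Second, you are right that $E_m\to0^+$ is Lemma \ref{lem:2-13}, which needs ($f_6$).

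\textbf{Comparison with the paper's route.} The paper also builds an explicit path from $u_1$ to $u_2$ through $\tau(\overline{u}_{\delta,e_1})*\overline{u}_{\delta,e_1}$, and takes $\gamma_m>0$ from \eqref{lem:4-2-6} as you do. It differs from yours as follows.
\begin{itemize}
\item It approximates $u$ by truncations $u\chi(\delta x)$. These converge in norm, so Lemma \ref{lem:2-8} applies verbatim.
\item It bounds the energy on the connecting segments by $I(ta+(1-t)b)\le tI(a)+(1-t)I(b)$, together with endpoint energies below $E_m/4$.
\end{itemize}
Your route gains three things.
\begin{itemize}
\item Smoothing makes the dilation arcs genuinely continuous in the $BV$ norm. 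This fails for truncated $BV$ profiles with jumps, and the paper does not address it.
\item On the low side you only need the segment to stay in $A$, because $\sup_AI<\inf_BI\le\gamma_m$ absorbs it; no comparison with $E_m$ is required.
\item On the high side you interpolate only after dilating far out, where $I<0$ uniformly.
\end{itemize}
The last point matters. For disjointly supported $a,b$ one has $I(ta+(1-t)b)=I(ta)+I((1-t)b)$. Condition ($f_3$) gives $F(st)\le s^\theta F(t)$ for $s\in[0,1]$, hence $I(sa)\ge sI(a)$. So the convexity-type bound the paper uses goes the wrong way. At a fixed scale, shrinking the amplitude of a concentrated profile can push its energy up to roughly $\frac{1}{N}S^N$ or more. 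Your far-out interpolation avoids this.
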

\begin{proof}
For arbitrary $\varepsilon>0$, we construct a path in $\Gamma_m$ (defined in (\ref{lem:4-2-5})) which linking $u_1$ and $u_2$ and having maximal energy less than $E_m+\varepsilon$.

(1) By the definition of $E_m$, $\exists u\in\mathcal{P}_m$ such that $I(u)\leq E_m+\frac{\varepsilon}{2}$.
Let $u_\delta$ be defined as in the proof of Lemma \ref{lem:2-10}, we define $\overline{u}_\delta=\frac{m}{\|u_\delta\|_1}u_\delta$, then $\overline{u}_\delta\in S_m$. By Lemma \ref{lem:2-6},
there exists a unique number $\tau(\overline{u}_\delta)\in\mathbb{R}$ such that $P(\tau(\overline{u}_\delta)*\overline{u}_\delta)=0$ and $I(\tau(\overline{u}_\delta)*\overline{u}_\delta)>I(\tau*\overline{u}_\delta)$ for any $\tau\neq\tau(u)$. 
Then arguing like for (\ref{lem:2-10-2}), we have for $\delta>0$ small enough,
\begin{eqnarray*}
  E_m\leq I(\tau(\overline{u}_\delta)*\overline{u}_\delta) \leq I(u)+\frac{\varepsilon}{2}\leq E_m+{\varepsilon}.
\end{eqnarray*}

(2) By Lemma \ref{lem:2-5}, $\exists \tau_1<0$ such that
\begin{eqnarray*}
  I(\tau_1*\overline{u}_\delta) &=& e^{\tau_1}J_1(\overline{u}_\delta)\leq\frac{1}{4}E_m.
\end{eqnarray*}
Denote $e_1=(1,0,\cdots,0)\in\mathbb{R}^N$ and define $\overline{u}_{\delta,e_1}(x)=\overline{u}_\delta\left(x-\frac{4}{e^{\tau_1}\delta}e_1\right)$.
Since $I$ is translational invariant, we have
\begin{eqnarray*}
  E_m\leq I(\tau(\overline{u}_{\delta,e_1})*\overline{u}_{\delta,e_1}) \leq E_m+\varepsilon
\end{eqnarray*}
and
\begin{eqnarray*}
  I(\tau_1*\overline{u}_{\delta,e_1}) = I(\tau_1\overline{u}_{\delta})\leq\frac{1}{4}E_m.
\end{eqnarray*}
Now we define $u_{1,\delta}(x)=u_1(x)\chi(\delta x)$ and $\overline{u}_{1,\delta}=\frac{m}{\|u_{1,\delta}\|_1}u_{1,\delta}$.
Then $\overline{u}_{1,\delta}\in S_m$ and 
\begin{eqnarray*}
  \mbox{supp}(\tau_1*\overline{u}_{\delta,e_1})\cap\mbox{supp}(\overline{u}_{1,\delta})=\emptyset.
\end{eqnarray*}
Without loss of generality, we assume $I(u_1)\leq\frac{E_m}{4}$. Then for $\delta>0$ small enough, $I(\overline{u}_{1,\delta})\leq\frac{E_m}{2}$ and $\forall t\in[0,1]$,
\begin{eqnarray*}
  I\left((1-t)\overline{u}_{1,\delta}+t(\tau_1*\overline{u}_{\delta,e_1})\right)
  \leq(1-t)I(\overline{u}_{1,\delta})+tI(\tau_1*\overline{u}_{\delta,e_1})
  \leq\frac{3}{4}E_m.
\end{eqnarray*}
Let $u_{1,t}=tu_1+(1-t)\overline{u}_{1,\delta}$ and $\overline{u}_{1,t}=\frac{m}{\|u_{1,t}\|_1}u_{1,t}$.
Then $\overline{u}_{1,t}\in S_m$, $\|u_{1,t}\|_1=m+o(\delta)$ and
\begin{eqnarray*}
  I(\overline{u}_{1,t}) &\leq& \frac{m t}{\|u_{1,t}\|_1}J_1(u_1) + \frac{m(1-t)}{\|u_{1,t}\|_1}J_1(\overline{u}_{1,\delta})\\
  &\leq& \frac{m t}{\|u_{1,t}\|_1}\frac{1}{4}E_m+\frac{m(1-t)}{\|u_{1,t}\|_1}\frac{1}{2}E_m\\
  &\leq&\frac{1}{2}E_m+o(\delta).
\end{eqnarray*}

(3) By Lemma \ref{lem:2-5} again, $\exists \tau_2>0$ large enough such that
\begin{eqnarray*}
  I(\tau_2*\overline{u}_{\delta,e_1}) &=& I({\tau_2}*\overline{u}_\delta)\leq\frac{1}{4}E_m.
\end{eqnarray*}
We define $u_{2,\delta}(x)=u_2(x)\chi(\delta x)$ and $\overline{u}_{2,\delta}=\frac{m}{\|u_{2,\delta}\|_1}u_{2,\delta}$.
Then $\overline{u}_{2,\delta}\in S_m$ and 
\begin{eqnarray*}
  \mbox{supp}(\tau_2*\overline{u}_{\delta,e_1})\cap\mbox{supp}(\overline{u}_{2,\delta})=\emptyset. 
\end{eqnarray*}
We may assume $I(u_2)\leq\frac{E_m}{4}$. Then for $\delta>0$ small enough, $I(\overline{u}_{2,\delta})\leq\frac{E_m}{2}$ and $\forall t\in[0,1]$,
\begin{eqnarray*}
  I\left(t\overline{u}_{2,\delta}+(1-t)(\tau_2*\overline{u}_{\delta,e_1})\right) \leq tI(\overline{u}_{2,\delta})+(1-t)I(\tau_2*\overline{u}_{\delta,e_1})\leq\frac{3}{4}E_m.
\end{eqnarray*}
Let $u_{2,t}=t\overline{u}_{2,\delta}+(1-t)u_2$ and $\overline{u}_{2,t}=\frac{m}{\|u_{2,t}\|_1} u_{2,t}$.
Then $\overline{u}_{2,t}\in S_m$, $\|u_{2,t}\|_1=m+o(\delta)$ and
\begin{eqnarray*}
  I(\overline{u}_{2,t}) &=& I\left(\frac{mt}{\|u_{2,t}\|_1}\overline{u}_{2,t}+\frac{m(1-t)}{\|u_{2,t}\|_1}u_2\right)\\
  &\leq&\frac{m}{\|u_{2,t}\|_1}I(u_2)+o(\delta)\\
  &\leq&\frac{1}{2}E_m.
\end{eqnarray*}

In conclusion, we have construct a path linking $u_1\to\overline{u}_{1,\delta}\to\tau_1*\overline{u}_{\delta,e_1}\to \tau(\overline{u}_{\delta,e_1})*\overline{u}_{\delta,e_1}\to\tau_2*\overline{u}_{\delta,e_1}\to\overline{u}_{2,\delta}\to u_2$ with maximal energy less than $E_m+\varepsilon$. By the arbitrariness of $\varepsilon$, we derive $\gamma_m\leq E_m$. 
This completes the proof.
\end{proof}

\begin{proof}[\it Proof of Theorem \ref{thm:m1}] 
By Proposition \ref{prop:4-5}, there is a bounded minimizing sequence $\{v_n\}\subset\ S_m$ for $I|_{S_m}$ at level $\gamma_m$, satisfying $P(v_n)\to0$. 
Furthermore, Proposition \ref{prop:4-8} shows that this sequence is compact in $L^r(\mathbb{R}^N)$, $\forall r\in[1,1^*]$ provided $\gamma_m\in(0,\frac{1}{N}S^N)$ is small enough. 
Lemma \ref{lem:2-13} and Lemma \ref{lem:2-14} imply that this is true because $0<\gamma_n\leq E_m\to0^+$ as $m\to+\infty$. This means that we can choose $m$ sufficiently large to recover the compactness of $\{v_n\}$. 
Then by Proposition \ref{prop:4-9}, there exist  $v_0\in S_m$ with $P(v_0)=0$, $I(v_0)=\gamma_m$, and $\lambda>0$ such that \eqref{prop:4-9-1} holds. 
This proves that $(\lambda,v_0)$ is a solution pair to problem \eqref{eq:1-Lap}-\eqref{eq:constraint}.
\end{proof}

At the end of this paper, we give some remarks.

\begin{rem}
Let $v_0$ be the solution obtained in Theorem \ref{thm:m1} or \ref{thm:m3}. Then
\\
(1) $v_0\in S_m$;~\\
(2) $P(v_0)=0$;~\\
(3) $\gamma_m=E_m=I(v_0)$.
\\
Indeed, on the one hand we have $\gamma_m\leq E_m$ according to Lemma \ref{lem:2-14};
  on the other hand, in the proof of Proposition \ref{prop:4-9}, we find a minimizing sequence in $S_m$ that converges strongly in $L^1(\mathbb{R}^N)$ norm to $v_0$ in $S_m$ with $P(v_0)=0$ and $I(v_0)=\gamma_m$. This means that, in fact, $\gamma_m=E_m$.
\end{rem}

\begin{rem}
  The method for constructing a minimizing Palais-Smale sequence under constraints can be extended to the general case, more precisely, to a general positive convex Lipschitz continuous functional plus a $C^1$ functional.
\end{rem}

\begin{rem}
  It is interesting to remove the assumption that "$m>0$ is sufficiently large" for the special problem $f(u)=|u|^{q-2}u$, $q\in(1+\frac{1}{N},1^*)$, in Theorem \ref{thm:m3}.
\end{rem}


\subsection*{Conflict of Interest}
The authors declare no conflict of interest.

\subsection*{Data availability}
No data was used for the research described in the article.

\subsection*{Declaration of generative AI and AI-assisted technologies in the manuscript preparation process}
During the preparation of this work the authors used AI-assisted technologies in order to improve the English language and readability. 
After using the tools, the authors reviewed and edited the content as needed and take full responsibility for the content of the published article.

\subsection*{CRediT authorship contribution statement}
Zhouxin Li: Conceptualization, Methodology, Writing – original draft. \\
Ruishu Liu: Validation, Writing – review and editing.



\begin{thebibliography}{99}

\bigbreak

\small
\bibitem{AlPi17} Alves C. O., Pimenta M.T.O., On existence and concentration of solutions to a class of quasilinear problems involving the 1-Laplace operator, Calc. Var. Partial Differential Equations, {\bf 56} (2017), paper No.143, 24 pp.

\bibitem{ABC97} Ambrosetti, A., Badiale, M., Cingolani, S., Semiclassical states of nonlinear Schr\"{o}dinger equations, Arch. Ration. Mech. Anal., {\bf 140} (1997), 285-300.
    
\bibitem{AFP00} Ambrosio L., Fusco N., Pallara D., Functions of bounded variation and free discontinuous problems,
Clarendon Press, Oxford Press, 2000.

    
\bibitem{Anze85} Anzellotti G., The Euler equation for functionals with linear growth, Trans. Am. Math. Soc., {\bf 290}(2) (1985), 483-501.

\bibitem{ApSe21} Applolloni L, Secchi S., Normalized solutions for the fractional NLS with mass supercritical nonlinearity, J. Differential Equations, {\bf 286} 2021, 248-283.

\bibitem{ABM06} Attouch, H., Buttazzo, G., Michaille, G., Variational Analysis in Sobolev and BV Spaces: Applications to PDEs and Optimization, MPS-SIAM, Philadelphia, 2006.

\bibitem{BaVa13} Bartsch T, de Valeriola S., Normalized solutions of nonlinear Schr\"{o}dinger equations, Arch. Math., {\bf 100} (2013), 75-83.
    
\bibitem{BWW05} Bartsch T., Wang Z.-Q. and Willem M., The Dirichlet problem for superlinear elliptic equa-tions, In: Stationary Partial Differential Equations. Handb. Differ. Equ., vol. II, pp. 1-55. Elsevier/North-Holland, Amsterdam, 2005.
    
    
\bibitem{Chang09} Chang, K., The spectrum of the 1-Laplace operator, Commun. Contemp. Math., {\bf 9}(4) (2009), 515-543.

\bibitem{ChPi21} J. C. O Chata, M. T. O. Pimenta, A Berestycki-Lions'  type result to a quasilinear elliptic problem involving the 1-Laplacian operator, J. Math. Anal. Appl., {\bf 500} (2021), Paper No.125074, 21pp.

\bibitem{Clar76} F. H. Clarke, A new approach to Lagrange multipliers, Math. Oper. Res., {\bf 1} (1976), 165-174.

\bibitem{CoDM93} Corvellec J.N., Degiovanni M., Marzocchi M., Deformation properties for continuous functionals and critical point theory, Topol. Methods Nonlinear Anal., {\bf 1} (1993), 151-171.

\bibitem{DeSc98} Degiovanni M., Schuricht F., Buckling of nonlinearly elastic rods in the presence of obstacles treated by nonsmooth critical point theory, Math. Ann., {\bf 311} (1998), 675-728.
    
\bibitem{DeMa94} Degiovanni M., Marzocchi M., A critical point theory for nonsmooth functional, Ann. Mat. Pura Appl., {\bf 167} (4) (1994), 73-100.

\bibitem{DeMa09} Degiovanni, M., Magrone, P., Linking solutions for quasilinear equations at critical growth involving the 1-Laplace operator, Calc. Var. Partial Differential Equations, {\bf 36} (2009), 591-609.

\bibitem{PiFe96} del Pino, M., Felmer, P.L., Local mountain pass for semilinear elliptic problems in unbounded domains, Calc. Var. Partial Differential Equations, {\bf 4}(2) (1996), 121-137.

\bibitem{FiPi18} Figueiredo G.M., Pimenta M.T.O., Existence of bounded variation solutions for a 1-Laplace problem with vanishing potentials, J. Math. Anal. Appl., {\bf 459} (2018), 861-878.
    
\bibitem{FlWe86} Floer, A., Weinstein, A., Nonspreading wave packets for the cubic Schr\"{o}dinger equation with a bounded potential, J. Funct. Anal. {\bf 69}(3) (1986), 397-408.

\bibitem{Gui96} Gui, C., Existence of multi-bump solutions for nonlinear Schr\"{o}dinger equations via variational method, Comm. Partial Differ. Equ., {\bf 21}(5-6) (1996), 787-820.

\bibitem{IkTa19} Ikoma N, Tanaka K., A note on deformation argument for L2 normalized solutions of nonlinear Schr\"{o}dinger equations and systems, Adv Differ Equ, {\bf 24} (2019), 609-646.

\bibitem{Jean97} Jeanjean L., Existence of solutions with prescribed norm for semilinear elliptic equations, Nonlinear Anal., {\bf 28} 1997, 1633-1659.
    
\bibitem{JeLu20} Jeanjean  L., Lu S.-S., A mass supercritical problem revisited, Calc. Var. Partial Differential Equations, {\bf 59} (2020), Paper No. 174, 43 pp.
    
\bibitem{KaSc07} Kawohl B., Schuricht F., Dirichlet problems for the 1-Laplace operator, including the eigenvalue problem, Commun. Contemp. Math., {\bf 9}(4) (2007), 525-543.
    
\bibitem{LiL21} Li Z., Liu R., Existence and concentration behavior of solutions to 1-Laplace equations on $R^N$, J. Differential Equations, {\bf 272} (2021), 399-432.
    
\bibitem{LLJ12} Li Z., Liu S., Jia R., Nonnegative solutions to 1-Laplace equations at critical growth, Sci. Sin., Math., {\bf 42} (8) (2012), 775-785. (in Chinese)
    
\bibitem{LiZo24} Li Q, Zou W., Normalized ground states for Sobolev critical nonlinear Schr\"{o}dinger equation in the $L^2$-supercritical case, Discrete Contin Dyn Syst, {\bf 44}(1) 2024, 205-227.

    
\bibitem{Lions85} Lions P. L., The concentration-compactness principle in the calculus of variation, the limit case, parts 1, 2., Rev. Mat. Iberoamericana, {\bf 1} (1985), 145-201, 45-121.

\bibitem{MaWi89} Mawhin J., Willem M., Critical point theory and Hamiltonian system, vol.74, Appl. Math. Sci., Springer, Heidelberg, 1989.

\bibitem{Oh90} Oh, Y.G., On positive multi-lump bound states of nonlinear Schr\"{o}dinger equations under multiple well potential, Commun. Math. Phys., {\bf 131}(2) (1990), 223-253.

\bibitem{QiZo23} Qi S., Zou W., Mass threshold of the limit behavior of normalized solutions to Schr\"{o}dinger equations with combined nonlinearities, J. Differential Equations, {\bf 375} (2023), 172-205.

\bibitem{Rabi92} Rabinowitz, P.H., On a class of nonlinear Schr\"{o}dinger equations, Z. Angew. Math. Phys., {\bf 43}(2) (1992), 270-291.
    
\bibitem{SaLe18} Salas A. M., de Leon S. S., Elliptic equations involving the 1-Laplacian and a subcritical source term, Nonlinear Analysis, {\bf 168} (2018), 50-66.

\bibitem{Soa201} Soave N., Normalized ground states for the NLS equation with combined nonlinearities, J. Differential Equations, {\bf 269} (2020), 6941-6987.

\bibitem{Soa202} Soave N., Normalized ground states for the NLS equation with combined nonlinearities: The Sobolev critical case, J Funct. Anal., {\bf 279} (2020), Paper No. 108610, 43 pp.

\bibitem{Szul86} Szulkin A., Minimax principle for lower semicontinuous functions and applications to nonlinear boundary value problems, Ann. Inst. H. Poincar\'{e}, {\bf 3}(2) (1986), 77-109.

\bibitem{Wang93} Wang, X., On concentration of positive bound states of nonlinear Schr\"{o}dinger equations, Commun. Math. Phys., {\bf 153}(2) (1993), 229-244.

\bibitem{WeWu22} Wei J., Wu Y., Normalized solutions for Schr\"{o}dinger equations with critical Sobolev exponent and mixed nonlinearities, J Funct. Anal., {\bf 283} (2022), Paper No. 109574, 46 pp.

\bibitem{Will96} Willem M., Minimax Theorems, Boston, Birkh\"{a}user, 1996.
    
\end{thebibliography}
\end{document}